\documentclass[12pt, oneside]{amsart}

\usepackage{amsmath,amssymb}
\usepackage{graphicx}
\usepackage{longtable}
\usepackage{booktabs,array}
\usepackage{color}
\usepackage{float}
\usepackage{microtype}
\usepackage{placeins}
\usepackage{etoolbox}
\usepackage[textwidth=6in, textheight=9in,marginpar=0.75in]{geometry}
\usepackage[colorlinks=true,linkcolor=blue,citecolor=blue,urlcolor=blue]{hyperref}

\newtheorem{theorem}{Theorem}[section]

\newtheorem{proposition}[theorem]{Proposition}
\newtheorem{lemma}[theorem]{Lemma}

\newcommand{\bbz}{{\mathbb Z}}
\newcommand{\bbq}{{\mathbb Q}}
\newcommand{\dbc}{\Sigma}
\newcommand{\KI}{\textsf{KnotInfo}}
\newcommand{\kn}[2]{\textup{#1#2}}

\newlength{\ctKnot}
\newlength{\ctRange}
\newlength{\ctWidth}

\newcommand{\Reff}{R}

\newcommand{\nExactUpper}{19}

\newcommand{\nLowerImproved}{390}
\newcommand{\nChildren}{11}
\newcommand{\nCyclicFive}{4}
\newcommand{\nCyclicFour}{15}
\newcommand{\nCyclicNew}{78}
\newcommand{\nCyclicThree}{59}
\newcommand{\nDataKnots}{5{,}546}
\newcommand{\nDichotomy}{959}
\newcommand{\nDichotomyHeuristic}{46}
\newcommand{\nExact}{2{,}734}
\newcommand{\nExactLower}{2{,}715}
\newcommand{\nImproved}{380}
\newcommand{\nImprovedC}{3}
\newcommand{\nImprovedL}{91}
\newcommand{\nMcCoyKnots}{27}
\newcommand{\nOpenTwoThreeAlt}{1{,}027}
\newcommand{\nRefKnots}{12{,}965}
\newcommand{\nSigFourImproved}{35}
\newcommand{\nSigFourObstructed}{219}
\newcommand{\nSigFourOpen}{605}

\newcommand{\nSweepExact}{1{,}498}
\newcommand{\nSweepImproved}{215}
\newcommand{\nSweepNewExact}{1{,}496}
\newcommand{\nSweepObstructed}{1{,}711}
\newcommand{\nSweepRankFour}{7}

\newcommand{\nSweepRankThree}{43}
\newcommand{\nSweepRankTwo}{140}
\newcommand{\nSweepTargets}{1{,}967}

\newcommand{\nSweepUone}{1{,}521}

\newcommand{\nUfive}{28}
\newcommand{\nUfour}{172}
\newcommand{\nUthree}{373}
\newcommand{\nUthreeC}{11}
\newcommand{\nUthreeOwens}{226}
\newcommand{\nUtwo}{2{,}161}
\newcommand{\nUtwoC}{64}

\newcommand{\nUtwoL}{720}
\newcommand{\nUtwoM}{44}
\newcommand{\nWithCandidates}{22}

\newcommand{\nGPExact}{194}
\newcommand{\nGPImproved}{30}

\newcommand{\nLatestExactAgreements}{194}

\newcommand{\nLatestRangeAgreements}{57}

\newcommand{\nLatestRangeImprovements}{323}

\newcommand{\nLatestUnresolvedExact}{2{,}540}

\newcommand{\nMcCoyExcluded}{2{,}917}
\newcommand{\nMcCoyUnknotting}{710}
\newcommand{\nMcCoyVerified}{3{,}627}

\makeatletter
\patchcmd{\@settitle}{\bfseries}{\normalfont\scshape}{}%
  {\ClassError{amsart}{Could not set the title in small caps}{}}
\patchcmd{\@settitle}{\uppercasenonmath\@title}{}{}%
  {\ClassError{amsart}{Could not preserve title capitalization}{}}
\patchcmd{\@setauthors}{\MakeUppercase{\authors}}{{\scshape\authors}}{}%
  {\ClassError{amsart}{Could not set the author name in small caps}{}}
\g@addto@macro\@setauthors{%
  \vspace{-1.5ex}
  \begin{center}
    \normalfont\footnotesize
    Center for Geometry and Physics, Institute for Basic Science,\\
    Pohang 37673, South Korea
  \end{center}
}
\makeatother

\hypersetup{pdftitle={Computation of Unknotting Numbers: Which Knot Breaks the Bernhard–Jablan Conjecture?},pdfauthor={Seong-Jin Lee}}

\begin{document}

\title[Computation of Unknotting Numbers]{Computation of Unknotting Numbers:\\
  \mbox{Which Knot Breaks the Bernhard--Jablan Conjecture?}}
\author[Seong-Jin Lee]{Seong-Jin Lee}

\begin{abstract}
We determine the unknotting numbers of \nLatestUnresolvedExact{} prime knots
with at most $13$ crossings whose values are unresolved in the \KI{} snapshot
of 9 September 2026. The lower-bound calculations use Heegaard Floer
correction-term obstructions and Greene's spanning-tree model, with further
constraints supplied by the Casson--Walker invariant. We also give explicit
crossing-change constructions for upper bounds. For \nDichotomy{} alternating knots with range $[2,3]$, we verify
that no crossing change in a minimal diagram gives a knot of unknotting number
one. We determine the unknotting numbers of
all four knots in Brittenham and Hermiller's construction and thereby identify
\kn{13n}{3370} as an explicit counterexample to the original Bernhard--Jablan
conjecture.
\end{abstract}

\maketitle

\section{Introduction}\label{sec:intro}

A \emph{knot} is a smooth embedding of a circle $S^1$ in $S^3$, considered up to
ambient isotopy. A planar diagram records a generic planar projection together with the
overcrossing and undercrossing at each double point. By Reidemeister's theorem,
two diagrams represent the same knot if and only if they are related by planar
isotopy and a finite sequence of the three Reidemeister moves
\cite{LackenbySurvey}.
A \emph{crossing change} interchanges the overpassing and underpassing strands at
one crossing and it may change the knot type. Every knot can be transformed into the unknot by a finite number of crossing changes. 

The \emph{unknotting number} $u(K)$ is the least number of changes required to do so. Equivalently, if $u(D)$ is the least number
of crossings of a diagram $D$ that need to be changed to obtain a diagram of the unknot, then the unknotting number is defined by
\[
    u(K)=\min\{u(D)\mid D\text{ is a diagram of }K\}.
\]
This invariant goes back to Wendt's work on the Gordian number \cite{Wendt}. Since the minimum
is taken over all diagrams of $K$, a computation on one diagram might not determine
the unknotting number of the knot.

In this paper, we use an \emph{unknotting diagram}: a diagram of $K$
with a set of marked crossings whose changes give the unknot. If $r$ crossings are
marked, the diagram proves $u(K)\le r$.

Nakanishi and Bleiler showed that a minimal crossing diagram need not realize the unknotting number
\cite{Nakanishi,Bleiler}: the standard minimal diagram of $10_8$ requires three crossing
changes, whereas $u(10_8)=2$. There is no known
algorithm that computes the unknotting number of every knot
\cite{LackenbySurvey}; the same difficulty is emphasized in recent computational work
\cite{Applebaum}.

The Bernhard--Jablan conjecture relates this difficulty to minimal diagrams. Bernhard and Jablan
proposed that every nontrivial knot has a minimal diagram and a crossing whose
change lowers its unknotting number by one \cite{Bernhard,Jablan}. Let
$N_{\min}(K)$ be a set of the knots obtained by changing one crossing in any minimal
diagram of $K$. For nontrivial $K$, define
\[
    u^s_{\mathrm{BJ}}(K)=1+\min\{u(K')\mid K'\in N_{\min}(K)\}.
\]
This is called the \emph{strong Bernhard--Jablan unknotting number}. The \emph{weak}
number $u^w_{\mathrm{BJ}}(K)$ requires each crossing change in the sequence to
be made in a minimal diagram of the knot reached at that stage. Both numbers
are zero for the unknot. Thus
$u(K)\le u^s_{\mathrm{BJ}}(K)\le u^w_{\mathrm{BJ}}(K)$
\cite[\S1 and Lem.~2.1]{BH1705}. The original conjecture asserts
$u=u^s_{\mathrm{BJ}}$ for every knot. 
A failure of the weak version $u=u^w_{\mathrm{BJ}}$ alone does not determine whether the same knot fails the strong equality
\cite[Cor.~2.2]{BH1705}.

Brittenham and Hermiller proved
$u(\kn{13n}{3370})\le2<3=u^w_{\mathrm{BJ}}(\kn{13n}{3370})$ and showed
that at least one of
\[
    \kn{12n}{288},\quad \kn{12n}{491},\quad \kn{12n}{501},\quad \kn{13n}{3370}
\]
fails the strong equality \cite[Thm.~1.3]{BH1705}. By determining the
unknotting numbers of all four knots to be two, we prove that
\kn{13n}{3370} is an explicit counterexample to the original conjecture.
The Montesinos obstructions determine the values for \kn{12n}{288} and
\kn{12n}{501}, while Greene's spanning-tree model supplies the correction-term
data needed for \kn{12n}{491} and \kn{13n}{3370} \cite{Greene}.
Brittenham and Hermiller's minimal-diagram calculation then gives
$u^s_{\mathrm{BJ}}(\kn{13n}{3370})=3>u(\kn{13n}{3370})=2$
(Theorem~\ref{thm:bj-counterexample}).

We also determine $u(\kn{13n}{1587})=2$ in their second example
(Section~\ref{sec:bj-counterexample}). Brittenham and Hermiller had already shown that
$u(\kn{13n}{1587})\le2<3=u^w_{\mathrm{BJ}}(\kn{13n}{1587})$
\cite[\S3]{BH1705}. The new calculation gives its exact unknotting number, but does not decide
whether this particular knot also fails the strong equality.
For alternating knots with $u=1$, McCoy's theorem gives a positive result:
every alternating diagram contains an unknotting crossing \cite{McCoy}.

Brittenham and Hermiller also disproved additivity under connected sum by
constructing an unknotting sequence of length five for
$7_1\mathbin{\#}\overline{7_1}$, where $7_1=T(2,7)$ and
$\overline{7_1}$ denotes its mirror image. Both summands have unknotting
number three, so this gives
$u(7_1\mathbin{\#}\overline{7_1})\le5<6=u(7_1)+u(\overline{7_1})$
\cite[Thm.~1.2]{BHadd}.

To obtain lower bounds, we use the signature, the homology and linking
pairings of branched covers, and Floer-theoretic invariants. Applying these
obstructions together is useful when individual tests leave more than one
possible surgery description or sign pattern.

Greene's spanning-tree model makes correction terms accessible for further
branched covers, beyond the alternating and Montesinos cases \cite{Greene}.
When Khovanov homology establishes the $L$-space property, the model gives
finite sets of possible correction terms \cite{OSbranched,Greene}.
Testing all candidates against the surgery constraints can prove a lower bound
even when some correction terms remain undetermined \cite{Owens,NiWu}.
The Casson--Walker invariant also fixes the sum of the correction terms of an
$L$-space, which can resolve ambiguities left by the diagrammatic model
\cite{Rustamov,GreeneWatson}.
The necessary theory is reviewed in Section~\ref{sec:greene-background}.

On the upper-bound side, braid simplification, reinforcement learning, and geometric
simplification have been used to search for unknotting sequences in diagrams that
need not be minimal \cite{GukovUnknot,Applebaum,DKT,ReAPR}. The aim is to find a
diagram that can be transformed into the unknot by fewer crossing changes than are
required in a given minimal diagram. Such a diagram may have more crossings.

Our aim is to investigate how theoretical obstructions and unknotting diagrams
can be used together to compute the unknotting number.
Applied to knots with at most $13$ crossings, these methods determine \nExact{}
unknotting numbers and narrow \nImproved{} further ranges relative to the
archived comparison ranges specified in Section~\ref{sec:lower}
(Theorem~\ref{thm:main}). Lower bounds account for
\nExactLower{} exact values, including \nUfour{} values equal to four and
\nUfive{} equal to five from the rank-three and rank-four obstructions.
Explicit crossing-change constructions give the remaining \nExactUpper{}
exact values; ten of these also require the lower bounds obtained here. The computations using Greene's model contribute
\nSweepNewExact{} additional exact values (Section~\ref{sec:sweep}).

The count \nExact{} includes independent recoveries for 194 knots in recent concurrent
results\cite{GP}. In the \KI{} snapshot retrieved on 9 September 2026,
\nLatestExactAgreements{} of these knots have the same exact values and
\nLatestUnresolvedExact{} remain undetermined.
Section~\ref{sec:lower} explains the comparison with the database.

Alongside the numerical results, we ask how much can be learned from minimal diagrams.
For prime alternating knots, the flyping theorem makes it possible to compare crossing
changes across all minimal diagrams \cite{MT,BH1705}. For \nDichotomy{} alternating
knots with range $[2,3]$, we verify that none of these changes gives a knot of
unknotting number one. Any such knot with $u=2$ would be an alternating
counterexample; determining whether one has that value remains open.

The paper is organized as follows. Section~\ref{sec:background} provides a brief review of the relevant theory. Section~\ref{sec:lower} presents
the lower bounds, including applications of Owens's obstruction to three and four crossing
changes. Section~\ref{sec:upper} reviews upper-bound methods and gives our unknotting
constructions. In Section~\ref{sec:bj}, we revisit the examples of Brittenham and Hermiller
and study crossing changes in minimal alternating diagrams.
Section~\ref{sec:conclusion} places the results alongside concurrent work and
returns to the question of a counterexample among alternating knots.
The appendices contain the complete result tables and the diagram data.

\section{Background}\label{sec:background}

We denote $\det K=|\Delta_K(-1)|$ for the determinant where $\Delta_K(z)$ is the Alexander polynomial, $\sigma(K)$ for the signature, and
$\dbc(K)=\dbc_2(K)$ for the double cover of $S^3$ branched over $K$. Mirroring preserves
$u(K)$ and changes the sign of the signature. We use the convention in which changing a
negative crossing can decrease the signature by two.

\subsection{Signature and four-ball genus}

The signature gives one of the simplest lower bounds. Algebraically, it counts positive
eigenvalues minus negative eigenvalues of a symmetric form associated with a Seifert surface of the knot;
this integer is independent of the diagram used to define the form
\cite[\S1, Thm.~3.1]{Murasugi}. If $K'$ is obtained from $K$ by changing a positive crossing, then
$\sigma(K')\in\{\sigma(K),\sigma(K)+2\}$; for a negative crossing,
$\sigma(K')\in\{\sigma(K),\sigma(K)-2\}$
\cite{Murasugi,CL}. Since the unknot has signature zero,
\begin{equation}\label{eq:signature-bound}
    \frac{|\sigma(K)|}{2}\le u(K).
\end{equation}
The signs contain more information than this inequality alone. If $\sigma(K)=2r$ and
$u(K)=r$, every change in an unknotting sequence of length $r$ must be negative and must
lower the signature by two. We call this the case in which the signature bound is sharp.

Crossing changes also relate unknotting to four-dimensional topology. The smooth
\emph{four-ball genus} $g_4(K)$ is the least genus of a smooth, connected, oriented surface
properly embedded in $B^4$ with boundary $K$. An unknotting sequence of length $r$ gives an
immersed disk with $r$ double points; resolving these points gives an embedded surface of
genus $r$. Hence $g_4(K)\le u(K)$ \cite{CL}. In particular,
\begin{equation}\label{eq:concordance}
    \max\left\{\frac{|\sigma(K)|}{2},\,|\tau(K)|,\,
                      \frac{|s(K)|}{2}\right\}\le g_4(K)\le u(K),
\end{equation}
where $\tau$ is the Ozsv\'ath--Szab\'o invariant and $s$ is Rasmussen's invariant
\cite{OStau,Rasmussen}. These bounds form part of the information already recorded in knot
tables, but need not be sharp. A knot with $g_4(K)=0$ is called
\emph{slice}, and Nakanishi constructs prime slice knots with arbitrarily large
unknotting number \cite[\S3.1]{Nakanishi81}. For such knots, a useful lower
bound must detect restrictions on crossing changes beyond the four-ball genus.

\subsection{Branched covers and linking pairings}

The Montesinos trick turns crossing changes into surgery descriptions of branched covers
\cite{Montesinos}. For one crossing change, the ball containing the crossing lifts to a
solid torus. Replacing the crossing changes the Dehn filling on its complement.
Here Dehn filling means gluing in a solid torus with a specified meridian curve on the
boundary. The meridians for the two crossing choices intersect twice, which accounts
for the denominator two in the surgery coefficient \cite[Lem.~3.2]{Owens}.
If the crossing change unknots $K$, this describes $\dbc(K)$ as half-integral surgery
on a knot in $S^3$. Iterating the construction gives a surgery description on a link. The following
formulation keeps track of the signs \cite[Thm.~3]{Owens}.

\begin{theorem}[Montesinos--Owens]\label{thm:montesinos}
Suppose that $K$ can be unknotted by changing $p$ positive and $n$ negative crossings.
Then $\dbc(K)$ is obtained by surgery on a $(p+n)$-component link in $S^3$ with linking
matrix $\tfrac12Q$, where $Q$ is integral and symmetric and $Q\equiv I\pmod2$.
If $n=\sigma(K)/2$, then $Q$ is positive definite and exactly $n$ of its diagonal entries
are congruent to $3$ modulo $4$.
\end{theorem}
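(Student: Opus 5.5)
Write $r=p+n$ and treat the two parts of the statement separately: the surgery description first, then definiteness and the count of diagonal entries $\equiv 3 \pmod 4$. For the surgery description I will iterate the Montesinos trick. Choose disjoint small balls $B_1,\dots,B_r$ around the $r$ crossings to be changed. Each $(B_i,B_i\cap K)$ is a rational tangle, and its branched double cover is a solid torus $V_i$.

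Changing the crossing in $B_i$ replaces the tangle by another rational tangle. The new branched cover glues $V_i$ back along a meridian that meets the old one twice (\cite[Lem.~3.2]{Owens}). Doing all $r$ changes gives the unknot, whose double branched cover is $S^3$. So $\dbc(K)$ is obtained from $S^3$ by Dehn surgery on the $r$-component link $L=\bigcup_i L_i$ formed by the cores of the lifted solid tori.

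Next I fix framings. Measured against the preferred (Seifert) framing of each $L_i$ in $S^3$, the new meridian has slope $m_i/2$ with $m_i$ odd, because it meets the old meridian twice. The off-diagonal linking numbers $\operatorname{lk}(L_i,L_j)$ are integers, and I claim each is a half-integer times two. Concretely, I will compute them by lifting arcs in the tangle complements, following Owens. To write the surgery on $L$ with linking matrix $\tfrac12 Q$, I then set $Q_{ii}=m_i$ and $Q_{ij}=2\operatorname{lk}(L_i,L_j)$. This makes $Q$ integral and symmetric with odd diagonal and even off-diagonal entries, i.e.\ $Q\equiv I \pmod 2$. An equivalent route is Rolfsen twisting / a slam-dunk to replace each half-integral surgery by integral surgery on a two-component chain. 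This is bookkeeping, and I will cite \cite[Thm.~3]{Owens} for the precise normalisation.

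For the sign statements I pass to four-manifolds. The half-integral surgery with matrix $\tfrac12Q$ converts, by expanding each $m_i/2$ as a continued fraction, into an integral surgery presentation of $\dbc(K)$. This bounds a plumbing-type four-manifold $X$ whose intersection form is stably equivalent to $Q$ up to diagonal $\pm$ blocks. The sign of each crossing change fixes the sign of the correction, so that $m_i\equiv 1$ or $3 \pmod 4$ according to whether the change is positive or negative. Equivalently, Murasugi's signature change, $\sigma$ unchanged or shifted by $\pm 2$, is detected by the sign of $m_i$ relative to the Goeritz form.

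The key step, which I expect to be the main obstacle, is to compare $\sigma(X)$, or the signature of $Q$, with $\sigma(K)$. I plan to use the formula relating the signature of a branched double cover of $B^4$ over a pushed-in surface to $\sigma(K)$. The immersed disk from the unknotting sequence, with its double points resolved, gives such a surface, and the four-manifold $X$ is the double cover of $B^4$ branched over it. The signature formula should then read $\sigma(Q)=r-2n+\sigma(K)$ up to the chosen sign convention. When $n=\sigma(K)/2$ this gives $\sigma(Q)=r$, so $Q$ is positive definite. Granting this, the count of diagonal entries congruent to $3 \pmod 4$ equals the number of negative changes, namely $n$. Getting the orientation and sign conventions consistent with the paper's convention (``changing a negative crossing can decrease the signature by two'') is where care is needed.
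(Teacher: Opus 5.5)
Your first half matches the account the paper gives before citing \cite[Thm.~3]{Owens}: you iterate the Montesinos trick, and the distance-two meridians give coefficients $Q_{ii}/2$ with $Q_{ii}$ odd, while $Q_{ij}=2\operatorname{lk}(L_i,L_j)$. So $Q\equiv I\pmod 2$ is immediate. The paper does not reprove the sign statements; it cites them. Your second half, which tries to prove them, has a genuine gap, because both sign claims are asserted rather than derived.

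The claim that $Q_{ii}\equiv1$ or $3\pmod4$ according to the sign of the $i$-th change is not bookkeeping. For a single change to the unknot, $|Q|=\det K$ and $\det K\equiv(-1)^{\sigma(K)/2}\pmod 4$. So the residue of $Q$ modulo $4$ carries exactly the information of the sign of $Q$, and the mod-$4$ claim is equivalent to the signature claim. You can reduce to one component, since $S^3_{Q_{ii}/2}(L_i)=\dbc(U_i)$, where $U_i$ is the unknot with only the $i$-th crossing changed back (a single change of the same sign). But the rank-one sign computation must still be done.

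The identification of your continued-fraction manifold with a branched cover over the resolved disk is where everything happens, and it cannot hold for an arbitrary expansion. Resolving the double points in the oriented way gives an orientable genus-$r$ surface $F\subset B^4$. Then $\dbc(B^4,F)$ has $b_2=2r$ and signature $\sigma(K)$, independently of $p$ and $n$ (in conventions where a pushed-in Seifert surface gives $\sigma(K)$). The handlebody with the uniform expansion $Q_{ii}/2=[m_i,2]$ of \eqref{eq:qtilde-n} has signature $r+\operatorname{sign}(Q)$. In the case $n=\sigma(K)/2$, $p>0$ covered by the theorem, this is $2r$, not $\sigma(K)=2n$. For instance, for the positive unknotting crossing of $4_1$ one has $Q=5$, and $\widetilde Q=\bigl(\begin{smallmatrix}3&1\\1&2\end{smallmatrix}\bigr)$ is positive definite, while $\dbc(B^4,F)$ has signature $0$.

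The missing step is local. Over the $i$-th resolved double point, $\dbc(B^4,F)$ contains the double cover of $B^4$ branched over a Hopf band, which is a disk bundle over $S^2$ of Euler number $\pm2$. You must show two things:
\begin{itemize}
\item its zero-section is the class of a $2$-handle attached along a meridian of $L_i$, in a handle decomposition realizing your surgery link, so that the form is rationally $\tfrac12Q\oplus\operatorname{diag}(\pm2)$;
\item the sign is $+$ for a negative change and $-$ for a positive one.
\end{itemize}
Only then does $\operatorname{sign}\dbc(B^4,F)=\sigma(K)$ give your (correct) target formula $\operatorname{sign}(Q)=\sigma(K)-n+p=r-2n+\sigma(K)$, which equals $r$ exactly when $n=\sigma(K)/2$. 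Applied to each $U_i$, the same computation yields the mod-$4$ count. As written, your proposal only says the formula ``should then read'' this. So neither definiteness nor the count of diagonal entries $\equiv3\pmod4$ is established.
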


The definiteness condition allows us to apply the Heegaard Floer
correction-term inequality to the resulting surgery form.
Owens and Strle establish an analogous intersection-form theorem for normally
immersed disks in $B^4$ \cite[Thm.~1]{OwensStrle}.

The homology and linking pairing of $\dbc(K)$ can be read directly from a diagram.
Checkerboard-colour the regions of a connected knot diagram and put one vertex in each
shaded region, with an edge at each crossing. Give each edge its Goeritz sign and retain the
cyclic order of the edges at each vertex. This is the \emph{embedded signed Tait graph}.
Its signed medial graph recovers the knot diagram, and changing a crossing reverses the
corresponding edge sign. Figure~\ref{fig:tait} illustrates the construction for $6_2$:
the shaded regions of the knot diagram become vertices, and each crossing gives an
edge joining its two incident shaded regions. The Goeritz form of this graph describes
the homology and linking pairing of the double branched cover \cite{GL,Lickorish}.
The graph will also give the crossing-change formula in Section~\ref{sec:electrical}.

\begin{figure}[htbp]
\centering
\includegraphics[width=0.55\textwidth]{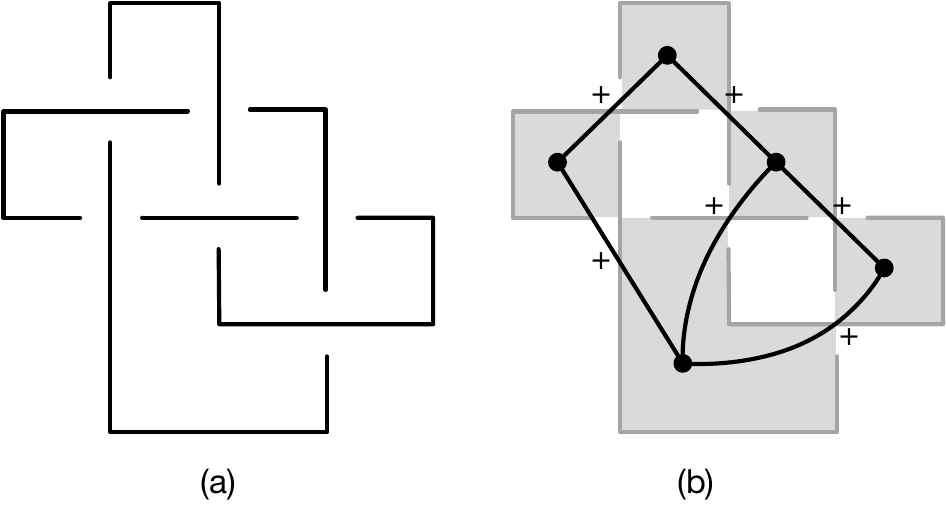}
\caption{(a) An alternating diagram of $6_2$ and (b) its Tait graph. Vertices correspond to shaded
regions and edges to crossings. For an alternating diagram all Goeritz signs agree, for
either fixed checkerboard colouring.}\label{fig:tait}
\end{figure}

The \emph{Goeritz matrix} $G$ is obtained by deleting one row and column of the signed
Laplacian: before deletion the off-diagonal entry is minus the sum of the edge signs
between the two vertices, and the diagonal entry is the sum of the incident non-loop edge
signs. It presents $H_1(\dbc(K))$, and
\[
    |\det G|=\det K,\qquad
    \lambda_G([x],[y])=x^TG^{-1}y\pmod{\bbz}
\]
gives the linking pairing, up to the orientation convention for the cover
\cite{GL,Lickorish}. For a reduced alternating diagram, the two checkerboard forms are
definite with opposite signs. The determinant gives the order of the homology group,
whereas the pairing also records a value in $\bbq/\bbz$ for each pair of classes.
It therefore supplies a finer necessary condition.

If $u(K)=1$ and $D=\det K$, the surgery description implies
$H_1(\dbc(K))\cong\bbz/D$ with a generator of self-linking $\pm2/D$. This is
Lickorish's obstruction \cite{Lickorish}. If a chosen generator has self-linking $a/D$,
the condition becomes
\begin{equation}\label{eq:lickorish}
    a\equiv\pm2k^2\pmod D\quad\text{for some }k\text{ coprime to }D.
\end{equation}
Changing the generator multiplies its self-linking by the square of a unit modulo $D$.
The congruence therefore asks whether any generator has the self-linking required by
surgery, independently of the generator first chosen.
Failure of cyclicity or of this congruence proves $u(K)\ge2$. The sign ambiguity makes
this test independent of the orientation chosen for the double cover.

\subsection{Cyclic covers}\label{sec:generator-background}

A surgery description on $r$ components presents $H_1(\dbc(K))$ with at most $r$
generators. Thus $u(K)$ is at least the number of generators needed for this group.
There is a corresponding bound for higher covers \cite{Wendt,Nakanishi81}. Let $g_m(K)$ be
the least number of generators of $H_1(\dbc_m(K);\bbz)$, including its free part. Then
\begin{equation}\label{eq:cyclic}
    u(K)\ge\left\lceil\frac{g_m(K)}{m-1}\right\rceil,\qquad m\ge2.
\end{equation}
These inequalities can detect information invisible in the double cover. The
\emph{Alexander module} is the first homology of the infinite cyclic cover of the knot
exterior. It is a module over $\bbz[t,t^{-1}]$, where $t$ acts by a generator of the
deck transformations \cite[\S1]{Nakanishi81}. The Nakanishi index is the least size of a
square presentation matrix for this module. It also gives a lower bound on $u(K)$
\cite[Thm.~3]{Nakanishi81}.

If an abelian group is written in invariant-factor form as
$\bbz^b\oplus\bbz/d_1\oplus\cdots\oplus\bbz/d_t$, with
$1<d_1\mid\cdots\mid d_t$, its least number of generators is $b+t$. Equivalently it is
the maximum, over primes $q$, of the dimension after tensoring with $\mathbb F_q$.
Each free summand contributes one generator, just as each nontrivial invariant factor does.
This distinguishes the number of generators from the order of the group: a large
determinant alone need not give a large unknotting number.

\subsection{Definite fillings and correction terms}

The linking pairing records a necessary part of a surgery description. The correction
terms of Ozsv\'ath and Szab\'o impose further restrictions on the definite four-manifold
bounded by the cover \cite{OS03}. We use $\mathbb F_2$ coefficients for Heegaard Floer
homology. For a rational homology sphere $Y$ and a
$\mathrm{Spin}^c$ structure $\mathfrak s$, the correction term
$d(Y,\mathfrak s)\in\bbq$ is the least grading of a nonzero homogeneous element in the
image of $HF^\infty$ in $HF^+(Y,\mathfrak s)$. It records the grading at the bottom of
the infinite $U$-tower in this homology group and depends only on the oriented boundary
and its $\mathrm{Spin}^c$ structure \cite[\S4]{OS03}. Its relation with intersection
forms gives a finite obstruction once the possible surgery forms and the correction
terms of $Y$ are known.

For a positive-definite integral matrix $A$ of rank $r$ and odd determinant, write
$\operatorname{Char}(A)=\{\xi\in\bbz^r:\xi_i\equiv A_{ii}\pmod2\}$ and define
\begin{equation}\label{eq:m-form}
    m_A(g)=\min_{\substack{\xi\in\operatorname{Char}(A)\\
                          [\xi]=g\in\bbz^r/A\bbz^r}}
             \frac{\xi^TA^{-1}\xi-r}{4}.
\end{equation}
Since $\det A$ is odd, the map $\xi\mapsto[\xi]$ identifies characteristic covectors
modulo $2A\bbz^r$ with the discriminant group and places the spin structure at zero,
as in \cite[\S4]{Owens}. When $A$ is an intersection matrix, these covectors represent
first Chern classes of $\mathrm{Spin}^c$ structures on the filling. The expression
$\xi^TA^{-1}\xi$ is their square, and the minimum compares all extensions of a fixed
boundary structure \cite[\S4]{Owens}.

\begin{theorem}[Ozsv\'ath--Szab\'o; Owens]\label{thm:dmatch}
Let $X$ be a smooth, simply connected positive-definite four-manifold with boundary $Y$,
where $|H_1(Y)|$ is odd, and intersection matrix $A$. There is an isomorphism
$\phi:\bbz^r/A\bbz^r\longrightarrow\mathrm{Spin}^c(Y)$, with the spin structure as
origin, such that for every $g$,
\begin{equation}\label{eq:dmatch}
    m_A(g)\ge d(Y,\phi(g)),\qquad
    m_A(g)\equiv d(Y,\phi(g))\pmod2.
\end{equation}
\end{theorem}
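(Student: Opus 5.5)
The proof is a direct application of Ozsv\'ath and Szab\'o's inequality for negative-definite fillings \cite[Thm.~9.6]{OS03}, rewritten in the sign conventions of \eqref{eq:m-form}. We reverse orientation so that the negative-definite statement becomes one about positive-definite $X$. That inequality says: for a negative-definite $W$ with $\partial W=Y$ and any $\mathfrak t\in\mathrm{Spin}^c(W)$,
\[
c_1(\mathfrak t)^2+b_2(W)\le 4d(Y,\mathfrak t|_Y),
\]
with equality modulo $8$ (the congruence comes from the grading shift formula). Applied to $-X$, with the boundary orientation convention used in \cite{Owens}, this becomes
\[
\frac{c_1(\mathfrak t)^2-r}{4}\ge d(Y,\mathfrak t|_Y)
\]
for every $\mathrm{Spin}^c$ structure $\mathfrak t$ on $X$. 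The congruence modulo $2$ follows from the grading congruence after dividing by $4$.

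Next we translate into the linear-algebra model.
\begin{itemize}
\item Since $X$ is simply connected, $H_1(X)=0$ and $H^2(X)\cong\bbz^r$, and $H_2(X)\to H_2(X,Y)$ is given by $A$. Hence $H^2(Y)\cong\bbz^r/A\bbz^r$ and $H^1(Y)=0$, so $Y$ is a rational homology sphere.
\item $\mathrm{Spin}^c(X)$ is identified with the characteristic covectors $\operatorname{Char}(A)$ via $c_1$. This uses $H^2(X)$ torsion-free and $H_1(X)=0$, so there is no $2$-torsion ambiguity. The square is $c_1^2=\xi^TA^{-1}\xi$.
\item Restriction to $Y$ sends $\xi$ to its class. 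Because $|H_1(Y)|=\det A$ is odd, $c_1$ is injective on $\mathrm{Spin}^c(Y)$. So $\mathrm{Spin}^c(Y)$ is an affine copy of $H^2(Y)$ whose restriction map is $\xi\mapsto[\xi]$, with $\operatorname{Char}(A)/2A\bbz^r\cong\bbz^r/A\bbz^r$.
\item The spin structure on $Y$ is the unique self-conjugate one, so it corresponds to $[\xi]=0$; the class of the zero-Chern-class structure has $[\xi]\in 2\cdot H^2$ mapping to $0$ under the oddness identification. This defines $\phi$.
\end{itemize}

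Every $\mathrm{Spin}^c$ structure on $Y$ extends over $X$, since $H^3(X,Y)=H_1(X)=0$. We then minimise the inequality over all extensions $\xi$ with $[\xi]=g$, which gives $m_A(g)\ge d(Y,\phi(g))$.

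For the congruence, any two extensions differ by $2A v$, so the values $\xi^TA^{-1}\xi$ differ by $4(\xi^Tv+v^TAv)$. Because $\xi$ is characteristic, $\xi^Tv\equiv v^TAv\pmod 2$, so this difference is $\equiv0\pmod8$. Every extension therefore gives the same residue modulo $2$ after the shift, and it matches $d$.

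The main obstacle is the careful bookkeeping of identifications. The orientation and sign conventions must match: the positive-definite versus negative-definite version, and $Y$ versus $-Y$. The identification $\phi$ must send $0$ to the spin structure and be a group isomorphism, which requires $\det A$ odd so that halving is well defined. I would follow \cite[\S4]{Owens} for these conventions.
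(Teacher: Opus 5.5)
Your proposal is correct and follows the route the paper takes by citation. The paper gives no separate argument: it attributes the result to \cite{OS03,Owens} and relies on the setup before the theorem, which identifies characteristic covectors modulo $2A\bbz^r$ with the discriminant group, with the spin structure at zero. Your proof spells that out: apply the Ozsv\'ath--Szab\'o inequality and grading congruence to $-X$, use $d(-Y,\mathfrak t)=-d(Y,\mathfrak t)$, and translate through the identification of $\mathrm{Spin}^c(X)$ with $\operatorname{Char}(A)$ and $\mathrm{Spin}^c(Y)$ with $\bbz^r/A\bbz^r$. Your computation that the residue is independent of the extension is a harmless consistency check, since the congruence for the minimizing extension already follows directly from the grading shift.
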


This is the positive-definite form of the correction-term inequality
\cite{OS03,Owens}. Both conditions must hold under one group isomorphism for all
classes simultaneously; an arbitrary reordering of the correction terms is not
sufficient \cite[Thm.~4.1]{Owens}. A filling is called \emph{sharp} if equality can be
achieved for every boundary structure. For an alternating knot, the positive-definite
checkerboard Goeritz form gives a sharp filling. Consequently, its function $m_G$
determines the correction terms exactly, so the same diagram supplies the boundary
invariants against which other definite fillings can be tested \cite[Prop.~3.2]{OS05}.
We use its boundary orientation convention to compare $m_G$ with the correction terms
of $\dbc(K)$ \cite{OS05,Owens}. Reversing the orientation changes every correction term
to its negative, which must also be taken into account when passing to the mirror
\cite[Prop.~4.2]{OS03}. For alternating $K$, the spin correction term is
$-\sigma(K)/4$ \cite{MO}, which also provides a check on the convention.

It is sometimes enough to exhibit a single characteristic covector in a
given class. For any such $\xi$ representing $g$, set
$E_A(\xi)=(\xi^TA^{-1}\xi-r)/4$. By \eqref{eq:m-form},
$m_A(g)\le E_A(\xi)$, and the two quantities agree modulo two
\cite[\S4]{Owens}. Thus an identification $\phi$ fails
\eqref{eq:dmatch} if $E_A(\xi)<d(Y,\phi(g))$ or if their residues modulo two
differ. Such a covector certifies the obstruction without requiring the
minimum in \eqref{eq:m-form} to be evaluated in every class.

To apply the inequality to Theorem~\ref{thm:montesinos}, write
$Q_{ii}=2m_i-1$ and $Q_{ij}=2a_{ij}$ for $i\ne j$. Replacing the half-integral surgery description by
integral surgery produces a smooth, simply connected $2$-handlebody $X$ with
boundary $\dbc(K)$ \cite[Lem.~2.2 and proof of Thm.~5]{Owens}. Its intersection
form is
\begin{equation}\label{eq:qtilde-n}
    \widetilde Q=\begin{pmatrix}M&I\\ I&2I\end{pmatrix},
    \qquad M_{ii}=m_i,\quad M_{ij}=a_{ij},\quad 2M-I=Q.
\end{equation}
Its determinant is $\det Q=\det K$. If $\sigma(K)=2n$ and $u(K)=n$, then every $m_i$
is even. Some such form must satisfy \eqref{eq:dmatch} for the correction terms of
$\dbc(K)$ \cite[Thm.~5]{Owens}. If every form fails, the signature bound improves from
$n$ to $n+1$. The applications to $n=3$ and $n=4$ below use this existing theorem in
higher ranks.

For a normally immersed disk with $n=\sigma(K)/2$ negative double points,
Owens and Strle obtain the same type of definite form, with determinant dividing
$\det K$ and square quotient \cite[Thm.~1]{OwensStrle}.
If $\det K$ is square-free and $|\sigma(K)|=2n$, the obstruction above therefore
also rules out a normally immersed disk in $B^4$ bounded by $K$ with $n$ double
points. It follows that the resulting lower bound applies to the
\emph{four-dimensional clasp number}, the minimum number of transverse double
points of a smooth, properly immersed disk in $B^4$ with boundary $K$ and no other
singularities, and to the \emph{slicing number}, the minimum number of crossing changes
needed to obtain a slice knot. These invariants satisfy
$c_4(K)\le u_s(K)\le u(K)$ \cite[\S1]{OwensStrle}.

For an unknotting sequence of two crossing changes with $n=\sigma(K)/2$ negative
crossing changes, a convenient reduced form is
\begin{equation}\label{eq:qtilde}
    \begin{gathered}
    Q=\begin{pmatrix}2m_1-1&2a\\2a&2m_2-1\end{pmatrix},\qquad 0\le a<m_1\le m_2,\\
    (2m_1-1)(2m_2-1)-4a^2=\det K.
    \end{gathered}
\end{equation}
Exactly $n$ of $m_1,m_2$ must be even \cite[Thm.~1]{Owens}.
The determinant equation and parities may already exclude all forms; otherwise the
correction terms decide which forms remain possible.

\subsection{Floer homology from a knot diagram}\label{sec:greene-background}

A rational homology sphere $Y$ is an \emph{$L$-space} if
$\dim_{\mathbb F_2}\widehat{HF}(Y)=|H_1(Y;\bbz)|$. Ozsv\'ath and Szab\'o
constructed a spectral sequence from the reduced Khovanov homology of the mirror
of $K$ to $\widehat{HF}(\dbc(K))$ \cite[Thm.~1.1 and Cor.~1.2]{OSbranched}. In particular,
\begin{equation}\label{eq:kh-lspace}
    \det K\le \dim_{\mathbb F_2}\widehat{HF}(\dbc(K))
       \le \dim_{\mathbb F_2}\widetilde{Kh}(\overline K;\mathbb F_2).
\end{equation}
Thus reduced Khovanov homology of dimension $\det K$ proves that the cover is an
$L$-space. The coefficient field matters: a calculation over $\bbq$ alone does not
give this conclusion over $\mathbb F_2$.

The $L$-space property also determines the spin correction term. Let
$Y=\dbc(K)$ have the orientation induced from $S^3$, and let $\mathfrak s_0$
be its unique spin structure. If $Y$ is an $L$-space over $\mathbb F_2$,
the graded universal coefficient theorem shows that it is an $L$-space over
$\bbq$ as well, with the same correction terms: the unique free generator
in each $\mathrm{Spin}^c$ summand of integral $\widehat{HF}$ has the same
absolute grading over both fields. Since the reduced Floer homology
vanishes, Lin, Ruberman, and Saveliev's formula for the covering involution
\cite[Thm.~A and Rem.~1.1]{LRS} gives
\begin{equation}\label{eq:lspace-spin}
    d(Y,\mathfrak s_0)=-\frac{\sigma(K)}4.
\end{equation}
This supplies the spin value used in the
rank comparisons of Section~\ref{sec:sweep}.

Quasi-alternating links provide another way to establish the $L$-space property. They
form the smallest class containing the unknot and closed under the following
rule: a link belongs to the class if it has a crossing whose two resolutions are
in the class and have positive determinants adding to the determinant of the
link. The double branched cover of a quasi-alternating link is an $L$-space
\cite[Def.~3.1 and Prop.~3.3]{OSbranched}. A resolution tree ending in unknots or
non-split alternating links therefore supplies a finite certificate.

Greene's model makes further information accessible directly from a diagram.
Greene used it to compute the Floer homology of the branched covers of the
$20$ non-alternating, non-Montesinos knots with at most $10$ crossings
\cite[\S7]{Greene}. Here we use the model for a larger collection of knots
and combine the resulting correction-term data with surgery obstructions.
Mark an edge and its two adjacent regions. A Kauffman state chooses
one corner at each crossing so that every unmarked region receives exactly one
choice. Let $G$ be the reduced white Goeritz matrix and $m$ its size. We use
Greene's incidence numbers $\mu$: an off-diagonal entry is the sum of $\mu$ over
the edges joining the two regions, and the unreduced matrix has row sums zero.
Orient each edge of the white graph towards the white region on the same side
of the overstrand as the chosen corner. The vector $v_x$ records outdegree minus
indegree at the unmarked white vertices, while $\delta(x)$ counts white-corner
choices at crossings with $\mu=-1$. Greene's absolute grading formula is
\cite[Thm.~4.1]{Greene}
\begin{equation}\label{eq:greene-grading}
    \operatorname{gr}(x)=\delta(x)
       +\frac{v_x^TG^{-1}v_x-2m-3\sigma(G)}{4}.
\end{equation}
The vector $v_x$ also gives the first Chern class of the associated boundary
$\mathrm{Spin}^c$ structure in $\operatorname{coker}G$
\cite[\S4.5]{Greene}.

A state is \emph{solitary} if no other Kauffman state induces the same orientation
of the white graph \cite[Lem.~6.1 and Def.~6.2]{Greene}. These states generate the $E_1$ page
of a spectral sequence over $\mathbb F_2$ converging to $\widehat{HF}(\dbc(K))$
\cite[Thm.~6.8]{Greene}. Although that theorem is stated over $\bbz$, the
acyclicity argument for the nonsolitary summands uses differentials with
coefficient $\pm1$ and therefore remains valid over $\mathbb F_2$
\cite[Lems.~6.6--6.7]{Greene}. The rank-one summands give the solitary
generators. Greene's filtration is defined separately on each
$\mathrm{Spin}^c$ summand of $\widehat{CF}$ and is generated by homogeneous
Kauffman states. The associated graded complex inherits the absolute Maslov
grading in \eqref{eq:greene-grading}; the Floer differential, and hence each induced
differential, lowers this grading by one \cite[Thm.~4.1 and \S6.1]{Greene}.
Passage to later pages therefore cannot introduce a grading absent from $E_1$.
If the cover is an $L$-space, each $\mathrm{Spin}^c$ summand of the limit is
one-dimensional and its surviving class has grading $d$. Its correction term
must consequently be one of the solitary-state gradings in that summand.
Changing the marking gives further candidate sets for the same invariant
\cite[\S7.1]{Greene}.

There is a further constraint on these candidates. Normalize the
Casson--Walker invariant $\lambda_{\mathrm{CW}}$ to have value $-1$ on the
boundary of the negative-definite $E_8$ plumbing. For an $L$-space $Y$,
Rustamov's formula for the sum of the renormalized Euler characteristics
gives $\sum_{\mathfrak s}d(Y,\mathfrak s)=-2|H_1(Y)|\lambda_{\mathrm{CW}}(Y)$
\cite[Thm.~3.3]{Rustamov}. If $Y=\dbc(K)$ and $D=\det K$, Mullins's formula
expresses this invariant in terms of the Jones polynomial and signature;
the normalization in \cite[Thm.~13]{GreeneWatson} is twice the one used here.
With $V_K$ normalized by $V_{\mathrm{unknot}}=1$ and the orientation
convention of \eqref{eq:lspace-spin}, the resulting identity is
\begin{equation}\label{eq:casson-sum}
    \sum_{\mathfrak s\in\mathrm{Spin}^c(Y)}d(Y,\mathfrak s)
    =-2D\lambda_{\mathrm{CW}}(Y)
    =-\frac{D\sigma(K)}4+\frac{D V'_K(-1)}{6V_K(-1)}.
\end{equation}
Every candidate vector must have this sum. In particular, if the sum of the
least candidate in each class already equals the right-hand side, each
correction term is determined to be that least candidate. The $L$-space
hypothesis is needed here to eliminate the reduced Floer homology terms
from Rustamov's formula.

The $L$-space condition places each correction term among the solitary-state
gradings. Owens's inequality \eqref{eq:dmatch} itself applies to rational
homology spheres without this hypothesis \cite[Thms.~4.1 and~5]{Owens}.
Even if the candidate sets contain more than one grading, they give an
obstruction when every possible correction-term vector fails the necessary
surgery conditions, since the actual vector is among those tested. This
allows us to apply the inequality using Greene's model when neither a sharp
Goeritz form nor a plumbing description is available
\cite[Thms.~4.1 and~5]{Owens}.

\subsection{Polynomial and torsion bounds}

The Jones polynomial gives a signed refinement of a generator bound. At
$\omega=e^{i\pi/3}$ one has
\begin{equation}\label{eq:lm}
    V_K(\omega)=\varepsilon_K(i\sqrt3)^{d_K},\qquad
    d_K=\dim_{\mathbb F_3}H_1(\dbc(K);\mathbb F_3),\quad
    \varepsilon_K\in\{1,-1\}
\end{equation}
\cite{LM,Jones}. The absolute value determines $d_K$, while $\varepsilon_K$ retains
information about the signs of crossing changes. A crossing change alters $d_K$ by
at most one. If an unknotting sequence has length $d_K$, every change lowers $d_K$,
and the Jones skein relation implies
\begin{equation}\label{eq:traczyk}
    \varepsilon_K=(-1)^{\text{number of negative crossing changes}}.
\end{equation}
This is the signed constraint used in Traczyk's criterion \cite{Traczyk}; Owens combines
it with correction terms in the example $9_{35}$ \cite{Owens}. It is useful when the
signature leaves more than one possible sign pattern: a pattern compatible with the
signature may still have the wrong parity in \eqref{eq:traczyk}.

Torsion in knot homology gives another kind of lower bound. Over $\mathbb F_2[U]$,
$\mathrm{HFK}^-(K)$ consists of a free summand and a torsion submodule
\cite[Def.~1.1]{JMZ}. Let $\operatorname{Ord}_U(K)$ be the least $N\ge0$ such that $U^N$
annihilates this torsion submodule. When torsion is present, it is the largest exponent $a$ for which
$\mathbb F_2[U]/(U^a)$ is a torsion summand, rather than the number of torsion generators.
This torsion order satisfies
\begin{equation}\label{eq:torsion}
    \operatorname{Ord}_U(K)\le u(K)
\end{equation}
\cite{AE,JMZ}. Crossing-change maps compose to multiplication by the
polynomial variable, so an unknotting sequence forces a power of $U$ to
annihilate all torsion, since the knot Floer homology of the unknot is torsion-free
\cite[\S4]{AE}.

For the corresponding bounds in deformed Khovanov theories, the coefficient conventions
must be specified: Alishahi--Dowlin use $X$-torsion in the Lee theory over
$\bbq[X,t]/(X^2-t)$, whereas Alishahi uses $h$-torsion in the characteristic-two
Bar-Natan theory \cite[Thm.~1.2]{AD}\cite[Thm.~1.2]{AlishahiBN}. These invariants need not
be determined by $\tau$, $s$, or the linking pairing. The torsion results reported in
Section~\ref{sec:lower} use the knot Floer bound.

\subsection{Alternating diagrams}\label{sec:alternating-background}

A reduced alternating diagram realizes the crossing number of its knot
\cite{Kauffman,MurasugiJones,ThisSpan}. The flyping theorem states that any two reduced
alternating diagrams of a prime alternating knot are related by flypes \cite{MT}.
A flype rotates a tangle and moves an adjacent crossing to its other side. Changing a
crossing before a flype gives the same knot type as changing the corresponding crossing
afterwards; at the active crossing one may need the opposite flype. Thus the crossing
neighbors of a knot can be compared across minimal diagrams
\cite[Lem.~2.4]{BH1705}.

\begin{lemma}\label{lem:flype}
For a prime alternating knot, the set of knot types obtained by changing one crossing in
a minimal diagram is independent of the minimal diagram.
\end{lemma}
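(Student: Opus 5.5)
The plan is to reduce the statement to a single flype. First we identify which diagrams are in play. For a prime alternating knot $K$, every minimal diagram is reduced and alternating. A non-alternating or non-reduced diagram would have a different span of the Kauffman bracket, or would have to contain more crossings, by \cite{Kauffman,MurasugiJones,ThisSpan}. By the flyping theorem \cite{MT}, any two minimal diagrams $D,D'$ are then joined by a finite sequence of flypes $D=D_0\to D_1\to\cdots\to D_k=D'$. Every intermediate $D_j$ is again reduced alternating and so minimal. By induction on $k$, it therefore suffices to prove the claim when $D'$ is obtained from $D$ by one flype. In that case we must show that the set $N(D)$ of knot types obtained by changing one crossing of $D$ equals $N(D')$.

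For a single flype, we set up a bijection between the crossings of $D$ and those of $D'$. A flype acts on a tangle $T$ and an adjacent crossing $c$. The crossings of $D$ outside $T\cup\{c\}$ correspond identically to those of $D'$, the crossings inside $T$ correspond via the rotation of $T$, and $c$ corresponds to the crossing $c'$ on the other side of $T$.

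We then check two cases.

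\textbf{Crossings other than the active one.} Let $x\ne c$ and let $x'$ be its image. The flype is an ambient isotopy supported in a ball containing $T\cup\{c\}$: it rotates $T$ by $\pi$ and carries the crossing $c$ around it. Changing $x$ first and then performing the same rotation is still an isotopy. If $x$ lies in $T$, the rotation carries the changed $x$ to the changed $x'$. If $x$ lies outside the ball, the isotopy does not touch it. In both cases $D$ with $x$ changed is isotopic to $D'$ with $x'$ changed. Here we only use that the flype is a geometric move which does not depend on the crossing information inside $T$.

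\textbf{The active crossing.} Changing $c$ in $D$ produces a diagram $D_c$ in which the crossing next to $T$ has the opposite twist. $D_c$ is still in flype position, but it is no longer alternating. The flype move performed on $D_c$, rotating $T$ in the same way, transports the changed $c$ to the opposite side of $T$. The result is $D'$ with $c'$ changed. If the conventions instead produce the mirror-type half twist, we use the opposite rotation direction; this is the remark in the text that at the active crossing one may need the opposite flype. In either case the flype is an isotopy, so $D_c$ and $D'_{c'}$ represent the same knot.

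Together, the two cases show that the crossing bijection induces $N(D)=N(D')$. This is the content of \cite[Lem.~2.4]{BH1705}, which we may cite for the local check.

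The main obstacle is the active crossing: one must verify carefully that the flype of the non-alternating diagram $D_c$, in the appropriate rotation direction, yields exactly $D'$ with $c'$ changed, with the correct crossing sign. I would settle this by drawing the standard flype tangle picture with the crossing sign as a variable. The check is that a flype moves a half twist of either sign across $T$, since it is the same isotopy applied to a crossing of either handedness. Once that is settled, the remaining steps are formal.
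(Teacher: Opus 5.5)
Your proposal is correct and follows the paper's route, with the local check of Brittenham and Hermiller written out: minimal diagrams of a prime alternating knot are reduced alternating, the flyping theorem joins any two of them, and one flype preserves the set of crossing-change neighbours through the crossing bijection you describe. One point needs tightening. Write the flype as $[\varepsilon]+T\mapsto T^{h}+[\varepsilon]$. At the active crossing the rotation must always be reversed: rotating $T$ the same way turns $[-\varepsilon]+T$ into $[-2\varepsilon]+T^{h}+[\varepsilon]$, while the reversed rotation gives $T^{h}+[-\varepsilon]$. So it is the mirror flype, not ``the same isotopy,'' that carries the changed crossing across, which is exactly the paper's remark about needing the opposite flype.
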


Indeed, every minimal diagram of an alternating knot is alternating, and the flyping
theorem and the preceding observation apply; see also \cite[\S1]{McCoy}. In particular,
to determine whether a crossing change gives a specified knot type, it suffices to
examine a single reduced alternating diagram. The conclusion concerns the neighboring
knot types; after the crossing change, the displayed diagram need not be minimal or
alternating.

\begin{theorem}[McCoy {\cite[Thm.~1]{McCoy}}]\label{thm:mccoy}
A nontrivial alternating knot has unknotting number one if and only if every alternating
diagram of the knot contains an unknotting crossing.
\end{theorem}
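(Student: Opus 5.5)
The statement is McCoy's theorem, quoted from \cite{McCoy}. The plan is therefore to reduce the diagram-independence to Lemma~\ref{lem:flype} and then invoke McCoy's characterization of alternating knots with $u=1$. This reproduces the structure of McCoy's argument rather than giving a new one.

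The ``if'' direction is immediate. A nontrivial alternating knot has an alternating diagram, and an unknotting crossing in it gives $u(K)\le1$. Nontriviality gives $u(K)\ge1$, so $u(K)=1$.

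For ``only if'', suppose $u(K)=1$ and first reduce to the prime, reduced case.
\begin{itemize}
\item Any alternating diagram of $K$ that is not reduced has a nugatory crossing. Deleting it by an untwist preserves the alternating property and the set of remaining crossings. An unknotting crossing in the reduced diagram remains unknotting in the original, because the nugatory crossing does not affect the knot type.
\item $K$ is prime. A composite knot with $u=1$ is impossible: by Scharlemann's theorem, unknotting number one knots are prime.
\item By Menasco, a reduced alternating diagram of a prime alternating knot is prime as a diagram.
\end{itemize}
By Lemma~\ref{lem:flype}, the set of knots obtained by one crossing change is the same for every reduced alternating diagram. So it suffices to exhibit an unknotting crossing in a single reduced alternating diagram $D$.

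To find that crossing, use the Montesinos trick (Theorem~\ref{thm:montesinos}). After mirroring, $\dbc(K)$ is half-integral surgery $S^3_{-D/2}(\kappa)$ for some knot $\kappa$, where $D=\det K$. The sign is chosen so that the filling from Theorem~\ref{thm:montesinos} is definite of the opposite sign to the alternating checkerboard filling. Gluing produces a closed definite manifold, or equivalently a definite lattice embedding. Take the $2$-handlebody with form $\widetilde Q$ from \eqref{eq:qtilde-n} with $r=1$, and reverse its orientation. Glue it to the sharp positive-definite Goeritz filling $X_G$ of $\dbc(K)$, as in the discussion after Theorem~\ref{thm:dmatch}. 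Donaldson's theorem makes the closed manifold diagonal, so the Goeritz lattice $\Lambda_G$ embeds with codimension two in $\bbz^{m+2}$. Its orthogonal complement is spanned by the vectors of the form $\begin{pmatrix}2\end{pmatrix}$-type handle data.

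Sharpness of $X_G$ together with the equality case of \eqref{eq:dmatch} then forces the correction terms of the half-integral surgery to be attained. This equality is the mechanism of Greene's changemaker argument. It constrains the embedding so that the complementary vectors have a changemaker-like form.

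The main obstacle is the combinatorial step: translating this rigid lattice embedding back into the diagram. I would aim to show three things.
\begin{itemize}
\item $\Lambda_G$ is isomorphic, as a graph lattice, to a lattice in which one vertex relation has the form $\langle e_0,\cdot\rangle$ with coefficient pattern corresponding to a single edge of weight $\pm2$ on the other side.
\item Using the result of Greene that graph lattices determine the $2$-isomorphism type of the Tait graph, this forces the Tait graph of $D$, up to flypes, to contain an edge $e$.
\item Reversing the sign of $e$ yields a Goeritz form of determinant $\pm1$ presenting the unknot. Concretely, the Tait graph with $e$ switched is a diagram of a knot whose double branched cover is $S^3$, hence the unknot.
\end{itemize}
I expect the delicate part to be verifying that the second vector in the orthogonal complement corresponds to an actual crossing rather than a lattice artifact. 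I would first attempt this by analysing the short characteristic covectors pinned down by sharpness. Then Lemma~\ref{lem:flype} transfers the unknotting crossing to every alternating diagram.
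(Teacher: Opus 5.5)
The paper does not prove this statement; it quotes McCoy, adding only that McCoy also shows the conditions are equivalent to $\dbc(K)$ being half-integral surgery on a knot in $S^3$. Your ``if'' direction is correct, and so are your reductions: untwisting nugatory crossings, primality via Scharlemann, Menasco's theorem, and Lemma~\ref{lem:flype}. The outline you give (Montesinos trick, Donaldson's theorem against a sharp checkerboard filling, a changemaker constraint, Greene's graph-lattice theorem) matches the architecture of McCoy's proof. But the proposal stops where the theorem begins. Everything after ``I would aim to show'' is the substance of McCoy's paper, and the three bullets there are not an argument. The first is not a precise statement. The second concludes only that the Tait graph ``contains an edge $e$'', which holds for every graph. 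What must be proved is that a specific edge, singled out by the changemaker structure, is an unknotting crossing. Also, Greene's theorem recovers the Tait graph only up to $2$-isomorphism, that is, the diagram up to mutation, not up to flypes.

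The step meant to finish the proof fails. A Goeritz matrix of determinant $\pm1$ shows only that $\dbc(K')$ is an integral homology sphere, not that it is $S^3$. For example, $10_{124}=T(3,5)$ has determinant one, and its branched double cover is the Poincar\'e sphere. No lattice data about the changed form alone can certify $S^3$: with a suitable orientation, the Poincar\'e sphere bounds the trace of $+1$-surgery on the right-handed trefoil, whose form is $(1)$. By Proposition~\ref{prop:sigma}, $\det K_c=1$ is just the single numerical condition $\Reff(e)=(D\pm1)/(2D)$.

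The missing ingredient is McCoy's structural analysis of graphs whose lattices are half-integral changemaker lattices. That analysis pins down the crossing and proves, from the shape of the whole graph rather than from a determinant, that changing it gives the unknot.

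There is also a repairable sign slip in your set-up. If $\dbc(K)=S^3_{-D/2}(\kappa)$, then Owens's form $\widetilde Q$ with $r=1$ has $m_1=(1-D)/2\le0$ and $\det\widetilde Q=-D<0$, so it is indefinite. Separately, reversing a positive-definite handlebody and gluing it to the positive-definite $X_G$ gives an indefinite closed manifold, so Donaldson's theorem does not apply as written. Since both checkerboard colourings give sharp fillings, of opposite signs, you can fix this by gluing the definite half-integral surgery handlebody to the filling of the opposite sign.
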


McCoy also proves that these conditions are equivalent to $\dbc(K)$ being obtained by
half-integral surgery on a knot in $S^3$. For alternating knots this gives a converse to
the Montesinos trick: the existence of the surgery description forces an unknotting
crossing in the diagram \cite[Thm.~1 and \S7.4]{McCoy}.

This theorem turns a finite examination of a diagram into a lower-bound argument. If
$K$ is nontrivial and no crossing change in a reduced alternating diagram gives the
unknot, then $u(K)>1$. For larger unknotting numbers no such general conclusion follows:
the Bernhard--Jablan conjecture is an additional condition, even when all minimal diagrams
have been examined.

\section{Lower-bound computations and results}\label{sec:lower}

We now apply the obstructions of Section~\ref{sec:background}. A computed lower bound
determines the unknotting number when it reaches an independently known upper bound.
\subsection{Comparison with KnotInfo}\label{sec:reference}

We compare our computations for \nRefKnots{} prime knots with at most $13$
crossings with the \KI{} snapshot retrieved on 9 September 2026 \cite{KnotInfo}.
Of the \nExact{} exact values reported here, \nLatestUnresolvedExact{} are
undetermined in that snapshot and \nLatestExactAgreements{} are already
recorded with the same values. The downloaded data and the complete
comparison are deposited with the code.

Our computations independently recover the \nGPExact{} exact values in Gebel
and Prangley's Theorems~1 and~2, as well as their \nGPImproved{} improved
ranges \cite{GP}.

For the counts of improvements, let $[\ell(K),h(K)]$ denote the archived
starting range.\footnote{The aggregate totals and the starting ranges use the
archived research table, obtained from the August 2026 snapshot by retaining
the April 2026 ranges for the entries specified in the accompanying provenance
records \cite{KnotInfoSnapshots}.} Appendix~\ref{app:improve} records these starting ranges and the
resulting ranges for knots that remain undetermined.

Of the \nImproved{} improved ranges measured from the archived starting
ranges, \nLatestRangeAgreements{} are already recorded in the 9 September
snapshot; the remaining \nLatestRangeImprovements{} are narrower than its
ranges.

For \kn{13n}{3370}, we combine Brittenham and Hermiller's published upper
bound $u\le2$ \cite[Thm.~1.3(a)]{BH1705} with the lower bound in
Section~\ref{sec:bj}. The archived range $[1,3]$ remains the basis of
comparison, with the published upper bound recorded separately.

\subsection{Linking pairings}

For a knot with range $[1,b]$, the first question is whether its double branched cover
could arise from one crossing change. We apply the cyclicity and self-linking conditions
of \eqref{eq:lickorish} to the knots with these ranges. The obstruction excludes $u=1$
for $815$ non-alternating $13$-crossing knots. Of these, \nUtwoL{} have upper bound two,
so their unknotting number is two. A further \nImprovedL{} ranges improve to $[2,b]$;
four of the obstructed knots are settled by the stronger cyclic-cover bound below.

We compute the linking pairing from three presentations: a Goeritz matrix,
a Seifert matrix, and a triangulation of the double branched cover in Regina
\cite{Regina}. All three give the same obstruction list, and the test excludes
none of the $1{,}516$ knots with recorded unknotting number one.

\subsection{Torsion bounds}\label{sec:torsion}

The knot Floer torsion bound detects four knots among the
$3{,}002$ knots in the torsion scan with comparison range $[1,b]$:
\[
    \kn{13n}{689},\quad\kn{13n}{1166},\quad
    \kn{13n}{2504},\quad\kn{13n}{2807}.
\]
Each has torsion order two. The computations use the knot Floer program of \cite{HFK}
and the bound of \cite{AE,JMZ}.
Thus $u(\kn{13n}{1166})=u(\kn{13n}{2504})=2$, and the range of \kn{13n}{689}
improves from $[1,3]$ to $[2,3]$. For \kn{13n}{2807}, this confirms the obstruction from the linking pairing. The knots \kn{13n}{1166} and \kn{13n}{2504} show why it is useful to retain
both kinds of invariant: their linking pairings allow $u=1$, while their homological
torsion excludes it.

\subsection{Two crossing changes}\label{sec:owens}

Owens's obstruction compares possible half-integral surgery forms with the correction
terms of the double cover \cite{Owens}. For alternating knots the latter are obtained
from a Goeritz form. We distinguish the sign patterns permitted by the signature, since
excluding one pattern does not necessarily exclude unknotting number two.

If $\sigma(K)=4$ and two crossing changes unknot $K$, both crossings must be negative.
The forms in \eqref{eq:qtilde} must therefore have both $m_1$ and $m_2$ even. We enumerate
the possible forms and apply Theorem~\ref{thm:dmatch}, comparing their
correction-term bounds with $m_G$ over the isomorphisms of the discriminant
groups. Among the \nSigFourOpen{} knots with
$|\sigma|=4$ and comparison range $[2,3]$, the obstruction excludes $u=2$ for
\nSigFourObstructed{} knots. It also improves \nSigFourImproved{} ranges from $[2,4]$
to $[3,4]$. For \kn{13a}{2890}, the determinant and parity conditions already leave
no candidate form, so the correction-term comparison is unnecessary.

For $\sigma(K)=2$, by contrast, two sign patterns remain: $(p,n)=(1,1)$ and $(0,2)$.
The first satisfies $n=\sigma(K)/2$ and is covered by the definite obstruction, with
one of $m_1,m_2$ even. The second does not satisfy that hypothesis. When
$d_K=2$ and $\varepsilon_K=-1$ in \eqref{eq:lm}, Traczyk's constraint
\eqref{eq:traczyk} excludes two negative changes. Combining the two obstructions gives
\[
    u(\kn{12a}{634})=u(\kn{13a}{2008})
    =u(\kn{13a}{2745})=u(\kn{13a}{3607})=3.
\]
This is the same combination used by Owens for $9_{35}$ \cite{Owens}.

The generator bound accounts for three further values without the sign hypothesis.
The groups for \kn{13a}{2720} and \kn{13a}{2727} are
$\bbz/3\oplus\bbz/3\oplus\bbz/27$, and the group for \kn{13a}{1786} is
$\bbz/3\oplus\bbz/3\oplus\bbz/33$. Each needs three generators, and the recorded
upper bound is three. The same argument improves the range of \kn{13a}{4877} to
$[3,5]$. In total, we obtain \nUthreeOwens{} values equal to three and improve $36$ further
ranges using the obstruction to two crossing changes, Traczyk's criterion, and the bound from the homology of the double branched cover. The three values
obtained from this homological bound are marked separately in Appendix~\ref{app:u3}.

\subsection{Three and four crossing changes}\label{sec:higher}

Owens's general theorem is not restricted to two changes \cite[Thm.~5]{Owens}.
We apply it to alternating knots with $|\sigma|=6$ and $|\sigma|=8$.
After mirroring, write $\sigma(K)=2n$, where $n=3$ or $4$. If $u(K)=n$, all changes
in a shortest sequence are negative, so a positive-definite rank-$n$ form $Q$ must have
\[
    Q\equiv I\pmod2,\qquad Q_{ii}\equiv3\pmod4,\qquad
    \det Q=\det K.
\]
The associated rank-$2n$ form \eqref{eq:qtilde-n} must satisfy the correction-term
inequality. Consequently, excluding every candidate proves $u(K)\ge n+1$.
The enumeration makes the obstruction exhaustive: by Theorem~\ref{thm:montesinos},
any unknotting sequence of the prescribed length would yield an admissible surgery
form. Once all such forms are excluded, the conclusion applies to every diagram of
the knot.

The finiteness of the candidate list is part of this argument. For fixed rank and
determinant, positive-definite integral forms have only finitely many equivalence
classes under integral changes of basis. Owens's theorem uses the finer equivalence
under changes of basis congruent modulo two to permutation matrices; these preserve
the required parity conditions \cite[Thm.~5 and \S5]{Owens}. This subgroup has finite
index, so the finer set of classes is still finite. An exhaustive application must
include every admissible class; a reduced form may require a change of basis before
the congruences become visible. We test the corresponding
discriminant groups, linking pairings, and correction terms. The accompanying code and
data record the enumeration and the individual comparisons in ranks three and four.

For alternating knots with $|\sigma|=6$, the computation determines $129$
values equal to four and improves $12$ ranges from $[3,5]$ to $[4,5]$.
The exact values include \kn{12a}{107}, \kn{13a}{647}, \kn{13a}{648},
and \kn{13a}{660}. In each case the obstruction excludes three crossing
changes, and the retained upper bound is four.

For alternating knots with $|\sigma|=8$, the obstruction determines $21$
values equal to five by excluding $u=4$. The extension to non-alternating
knots in Section~\ref{sec:sweep} gives \nSweepRankThree{} additional values equal to four
and \nSweepRankFour{} additional values equal to five. Together, these computations
determine \nUfour{} values equal to four and \nUfive{} equal to five,
listed in Appendices~\ref{app:u4} and~\ref{app:u5}.

\subsection{Cyclic-cover bounds}\label{sec:cyclic}

We evaluate \eqref{eq:cyclic} using the homology of the covers $\dbc_m(K)$ for
$2\le m\le9$ recorded in \KI{}. Higher covers provide \nCyclicNew{} additional changes
to the comparison ranges: \nUtwoC{} knots have $u=2$, \nUthreeC{} have $u=3$, and
\nImprovedC{} further ranges improve. The effective bound comes from the threefold
cover in \nCyclicThree{} cases, the fourfold cover in \nCyclicFour{} cases, and the
fivefold cover in \nCyclicFive{} cases.

These computations use the full first homology, including free summands, and not just its
torsion subgroup. The Alexander-module calculations in \cite{DGKTdata} give closely
related bounds by specialization over finite fields. In the stored comparison, our
calculation using the tabulated covers recovers $77$ of the $78$ corresponding conclusions and
also detects \kn{12n}{873}. The remaining knot, \kn{13n}{4649}, is not detected by the
covers tabulated here. These overlapping values are independent recomputations of bounds
already present in that data set.

\subsection{Montesinos knots}\label{sec:montesinos}

A \emph{Montesinos knot} is a knot obtained as the numerator closure of a sum of rational
tangles: the tangles are joined in a row, and the two upper endpoints and the two lower
endpoints are joined outside the row. Each rational tangle is obtained from two trivial
arcs in a ball by successive twists of adjacent endpoints \cite[Def.~4.1 and Fig.~3]{MO}.

The surgery obstruction does not require an alternating diagram. For Montesinos knots,
the double branched covers are Seifert fibred spaces and, after reversing orientation
if necessary, bound negative-definite star-shaped plumbings \cite[\S4.1]{MO}.
Their correction terms can then be computed by the methods of
Ozsv\'ath--Szab\'o and N\'emethi \cite{OSplumbed,Nemethi}.

Correction-term obstructions to unknotting number one already appear in the work of
Ozsv\'ath and Szab\'o, including positivity, evenness, and symmetry conditions
\cite[Thm.~1.1 and \S8.4]{OS05}. Owens and Strle extend these conditions using the
formula of Ni and Wu \cite[Thm.~3 and its proof]{OwensStrle}.
If a knot of determinant $D$ has unknotting number one, its cover is
$\pm D/2$-surgery on a knot $C$ in $S^3$. For positive surgery the formula of Ni and Wu
\cite[Prop.~1.6 and Rem.~2.10]{NiWu} gives
\begin{equation}\label{eq:niwu}
 d(S^3_{D/2}(C),i)=d(L(D,2),i)
 -2\max\{V_{\lfloor i/2\rfloor},V_{\lfloor(D+1-i)/2\rfloor}\},
 \qquad 0\le i<D,
\end{equation}
where the nonnegative integers $V_j$ satisfy $V_j-V_{j+1}\in\{0,1\}$ and eventually
vanish \cite{NiWu}. Negative surgery is treated by reversing orientation. Thus, for an unknotting crossing to exist, this correction-term pattern must be
realized under an affine identification of the boundary $\mathrm{Spin}^c$ structures
with $\bbz/D$ that is compatible with the linking pairing.

Reduced Floer homology can distinguish covers whose correction terms alone allow such a
pattern. Lidman's proof that $u(\kn{11n}{102})=2$ illustrates this distinction
\cite{Lidman}. With the orientation used in that proof, the cover has the same correction
terms as $L(3,2)$, but its reduced Floer
homology has ranks $0,0,2$. The mapping cone formula forces a different distribution
among the boundary structures, excluding half-integral surgery. More generally, the
contribution from the reduced parts of the knot Floer mapping cone has the form
\begin{equation}\label{eq:cone}
    \operatorname{rank}HF_{\mathrm{red}}(S^3_{D/2}(C),i)-T_i
       =c_i+c_{i-1},\qquad c_i\in\bbz_{\ge0},\quad i\in\bbz/D,
\end{equation}
where $T_i$ is the finite-dimensional contribution from the tower part of the mapping
cone, after removing its infinite tower. It is determined by the $V_j$
\cite[Cor.~14, Prop.~15]{Gainullin}. Each reduced summand contributes at two adjacent
surgery labels. The ranks can be computed from the plumbing in the cases considered
here \cite{Nemethi}.

Among the $50$ Montesinos knots with comparison range $[1,b]$, these tests exclude
unknotting number one for $49$. They determine \nUtwoM{} values equal to two and improve
$5$ further ranges. The corresponding bounds are also recorded in \cite{DGKTdata}. Our independent
calculation illustrates the role of reduced homology for \kn{12n}{457}. For that knot the correction terms admit the
pattern $V_0=1$, $V_j=0$ for $j\ge1$. Under either surviving identification, however,
the only nonzero reduced ranks occur at the nonadjacent labels $4$ and $8$ in
$\bbz/11$, and $T_i=0$. This contradicts \eqref{eq:cone}, since a positive $c_i$
would make two adjacent ranks positive. Hence $u(\kn{12n}{457})=2$ follows from the
reduced homology test. The only target not excluded is \kn{12n}{309}, of determinant one.

For \kn{12n}{288} and \kn{12n}{501}, the correction terms already exclude unknotting
number one. Combining this obstruction with the recorded upper bounds gives
\[
    u(\kn{12n}{288})=u(\kn{12n}{501})=2.
\]
These are two of the four knots in Brittenham and Hermiller's counterexample
construction \cite[Thm.~1.3]{BH1705}. Together with the calculation of
$u(\kn{12n}{491})$ in Section~\ref{sec:bj}, they identify \kn{13n}{3370} as a
counterexample. All four knots in that construction now have unknotting number two.

\raggedbottom
\subsection{Knots whose branched cover is an \texorpdfstring{$L$}{L}-space}\label{sec:sweep}

The correction-term obstructions can also be applied when the cover has no
sharp Goeritz or plumbing description available. If the reduced Khovanov
homology over $\mathbb F_2$ has rank $\det K$, then \eqref{eq:kh-lspace} proves
that $\dbc(K)$ is an $L$-space. Greene's model gives finite sets containing its
correction terms, as explained in Section~\ref{sec:greene-background}. After
comparing the markings, we apply the surgery tests to every remaining vector.
An improved lower bound is recorded only if all these vectors are excluded.

We use the ranges obtained after the preceding lower-bound computations and the
two Greene calculations of Section~\ref{sec:bj-counterexample}; these input
ranges are frozen as the input to this calculation. They precede the
improvements reported in this subsection and can be reproduced from the
deposited sources. A knot enters
the list when it is non-alternating, satisfies the reduced mod-$2$ Khovanov rank
equality, and has cyclic $H_1(\dbc(K))$. The rank equality is a sufficient
condition for the $L$-space property. Cyclicity is a restriction of the
present implementation, which labels the classes by $\bbz/\det K$.
These two requirements select $4{,}264$ of the $6{,}236$ non-alternating prime
knots with at most $13$ crossings in the archived data; $31.6\%$ lie outside
this calculation on these grounds alone.

Within this group, the range and signature select the test. For a range $[1,b]$ with
$|\sigma|\le2$, we use the half-integral surgery pattern \eqref{eq:niwu} to
test $u=1$. For $[n,b]$ with $|\sigma|=2n$ and $n\in\{2,3,4\}$, we use the
rank-$n$ comparison. After mirroring if necessary, $\sigma=2n$ and any
unknotting sequence of length $n$ would consist entirely of negative crossing
changes. Mirroring reverses the orientation of the cover and negates its
correction terms. Theorem~\ref{thm:montesinos} and \eqref{eq:qtilde-n}
therefore apply. For this rank-$n$ comparison, we report an improved lower
bound only when the spin entry is determined to be $-n/2$, as required by
\eqref{eq:lspace-spin}, and every remaining candidate vector is incompatible
with every admissible form. Under these conditions the lower bound rises
from $n$ to $n+1$. For the test of $u=1$, we likewise require every candidate
vector, in both orientations and under every affine identification, to fail
the half-integral surgery condition.

We apply these tests to \nSweepTargets{} knots whose unknotting number is
undetermined in these input ranges. We also test knots with known values for comparison;
the completed calculations are consistent with those known values. The
selection and execution records are deposited with the code.

The sum constraint \eqref{eq:casson-sum} determines the correction terms
needed for two further applications. For $K=\kn{13n}{1619}$, the reduced
mod-$2$ Khovanov rank and determinant are both $33$, while $\sigma(K)=0$.
Here $V_K(-1)=33$ and $V'_K(-1)=-80$, so the correction terms sum to
$-40/3$. The sum of the least candidates obtained from Greene's model has
the same value. Each correction term is therefore fixed, and all
$2\varphi(33)\cdot33=1{,}320$ affine comparisons, including both
orientations, fail the surgery condition \eqref{eq:niwu}. An explicit
sequence of two crossing changes supplies the upper bound, giving
$u(\kn{13n}{1619})=2$.

The knots \kn{13n}{4876}, \kn{13n}{4973}, and \kn{13n}{5102} have determinants
$33$, $105$, and $117$, respectively, and signature $-8$. In each case the
reduced mod-$2$ Khovanov rank equals the determinant. Greene's model
determines the correction terms of the latter two covers directly; for
\kn{13n}{4876}, the spin value and \eqref{eq:casson-sum} select a unique
vector from the remaining candidates. After mirroring to obtain signature
$8$, we apply Owens's obstruction to four crossing changes
\cite[Thm.~5]{Owens}. Every admissible half-integral surgery form and group
identification is excluded by a characteristic covector as described after
Theorem~\ref{thm:dmatch}. Together with verified sequences of five crossing
changes, this proves
\[
    u(\kn{13n}{4876})=u(\kn{13n}{4973})=u(\kn{13n}{5102})=5.
\]
The correction-term vectors, surgery forms, and covectors are supplied with
the code. As a check on the normalization in \eqref{eq:casson-sum}, the
identity agrees with all $3{,}204$ complete correction-term vectors in the
comparison data.

The obstruction improves the lower bound for \nSweepObstructed{} knots:
\nSweepUone{} by the
surgery pattern, \nSweepRankTwo{} in rank two, \nSweepRankThree{} in rank three
and \nSweepRankFour{} in rank four. These bounds determine
\nSweepNewExact{} additional exact values and narrow \nSweepImproved{} ranges
without making them exact. Including the earlier values for \kn{12n}{491} and
\kn{13n}{3370}, Greene's model accounts for \nSweepExact{} exact values in
Table~\ref{tab:lower-summary}. The entries are marked $\mathrm{G}$ in the
appendices.

\subsection{Summary and verification}\label{sec:mccoy-results}

McCoy's theorem permits a broader verification of the alternating cases
(Theorem~\ref{thm:mccoy}). In the April 2026 snapshot,
\nMcCoyVerified{} alternating knots have lower bound one. We examine every
crossing in a reduced alternating diagram of each knot. All
\nMcCoyUnknotting{} knots recorded with $u=1$ have an unknotting crossing,
and none of the remaining \nMcCoyExcluded{} knots does. The latter set is
exactly the set whose lower bounds become two in the June snapshot. Thus the
complete scan agrees with the June classification in all
\nMcCoyVerified{} cases.

The \nMcCoyKnots{} entries marked $\mathrm{k}$ in
Appendix~\ref{app:improve} are a subset of these \nMcCoyExcluded{} knots.
Their lower bounds were restored to one when constructing our comparison
table, although they were already two in June. For each of these
\nMcCoyKnots{} diagrams, every crossing change has determinant different
from one, which alone excludes the unknot. They contribute to the count
relative to the reconstructed table, but are not new lower bounds relative
to either the June snapshot or the snapshot of 9 September.

Together, the lower bounds determine \nExactLower{} exact values and improve
\nLowerImproved{} further ranges before applying the constructions of
Section~\ref{sec:upper}.
Table~\ref{tab:lower-summary} assigns each change from the comparison table to one method; where several
obstructions apply, the knot is counted only once. The distinction between the
correction-term and generator bounds is retained in the appendix labels.

\begin{table}[htbp]
\centering
\small
\caption{Changes from the reconstructed comparison table supplied by lower
bounds, before applying the constructions of Section~\ref{sec:upper}.
Exact values occur when the computed lower bound reaches an
independently known upper bound. The McCoy row includes \nMcCoyKnots{} lower
bounds already recorded in the June snapshot and that of 9 September.}\label{tab:lower-summary}
\begin{tabular}{@{}p{9.3cm}rr@{}}
\toprule
Method & Exact values & Other ranges\\
\midrule
Linking pairing & 720 & 91 \\
Homological torsion & 2 & 1 \\
Correction terms, Traczyk's criterion, and homology & 226 & 36 \\
Obstruction to three crossing changes & 129 & 12 \\
Obstruction to four crossing changes (completed cases) & 21 & 0 \\
Higher cyclic covers & 75 & 3 \\
Branched covers of Montesinos knots & 44 & 5 \\
Greene's model for branched covers & 1498 & 215 \\
McCoy's criterion for alternating diagrams & 0 & 27 \\
\midrule
Total & \nExactLower{} & \nLowerImproved{}\\
\bottomrule
\end{tabular}
\end{table}

The constructions of Section~\ref{sec:upper} make ten of the
\nLowerImproved{} improved ranges exact, leaving \nImproved{} undetermined
ranges. Combining the lower bounds and these constructions gives the
following result.

\begin{samepage}
\begin{theorem}\label{thm:main}
For prime knots with at most $13$ crossings, our computations give the following
changes from the comparison table reconstructed from \KI{} in
Section~\ref{sec:lower}.
\begin{enumerate}
\item[(i)] We determine \nExact{} unknotting numbers: \nUtwo{} knots have $u=2$,
\nUthree{} have $u=3$, \nUfour{} have $u=4$, and \nUfive{} have $u=5$.
\item[(ii)] Lower bounds determine \nExactLower{} of these exact values by reaching
an independently known upper bound.\footnote{For \kn{13n}{3370} and
\kn{13n}{1587}, the upper bound two is supplied by \cite[Thm.~1.3(a) and
\S3]{BH1705}, rather than by the comparison ranges $[1,3]$.} Explicit
crossing-change constructions determine the remaining \nExactUpper{} values,
ten of which also use our improved lower bounds.
\item[(iii)] A further \nImproved{} ranges improve without becoming exact, all of
them by lower bounds.
\end{enumerate}
\end{theorem}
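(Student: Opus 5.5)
The theorem is a bookkeeping statement, so the proof assembles the per-knot certificates of Section~\ref{sec:lower} and Section~\ref{sec:upper}, knot by knot, against the reconstructed comparison table $[\ell(K),h(K)]$. For each knot $K$ we record a new lower bound $\ell'(K)\ge\ell(K)$ and a new upper bound $h'(K)\le h(K)$. Each new lower bound comes with a certificate from one of the methods in Table~\ref{tab:lower-summary}:
\begin{enumerate}
\item[(a)] the linking-pairing congruence \eqref{eq:lickorish};
\item[(b)] the torsion bound \eqref{eq:torsion};
\item[(c)] Owens's rank-$n$ obstruction via Theorem~\ref{thm:montesinos} and Theorem~\ref{thm:dmatch}, possibly with Traczyk's constraint \eqref{eq:traczyk} or the generator bound;
\item[(d)] the cyclic-cover bound \eqref{eq:cyclic};
\item[(e)] the Montesinos tests \eqref{eq:niwu} and \eqref{eq:cone};
\item[(f)] the Greene-model tests of Section~\ref{sec:sweep}, with the $L$-space property certified by \eqref{eq:kh-lspace} and candidates pruned by \eqref{eq:lspace-spin} and \eqref{eq:casson-sum};
\item[(g)] McCoy's criterion, Theorem~\ref{thm:mccoy}.
\end{enumerate}
Each new upper bound comes with an explicit unknotting diagram, checked by identifying the result of the marked crossing changes as the unknot. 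For \kn{13n}{3370} and \kn{13n}{1587} the upper bound is the published one from \cite{BH1705}. A knot then counts toward (i) if $\ell'(K)=h'(K)>\ell(K)$ or $h'(K)<h(K)$ with equality. It counts toward (iii) if the range narrowed but $\ell'(K)<h'(K)$.

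\textbf{Step 1: lower bounds.} Every certificate is a valid lower bound for $u(K)$ over all diagrams. Methods (a), (b), (d) are classical invariants. Method (c) is exhaustive because of the finite enumeration of admissible forms under mod-$2$ permutation equivalence. Method (f) is sound because the true correction-term vector lies among the tested candidates; this needs the $L$-space hypothesis, since Greene's spectral sequence cannot introduce new gradings. Each knot is assigned to exactly one row of Table~\ref{tab:lower-summary}, and the rows sum to \nExactLower{} exact values and \nLowerImproved{} ranges.

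\textbf{Step 2: upper bounds.} Apply the crossing-change constructions. For \nExactUpper{} knots, $h'$ meets the lower bound. For ten of these, the lower bound met is one of the \nLowerImproved{} improved bounds, which moves them from the improved-range count to the exact count. This gives $\nExactLower{}+\nExactUpper{}=\nExact{}$ and $\nLowerImproved{}-10=\nImproved{}$. For (iii) we also check that no remaining improvement arises from an upper bound alone.

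\textbf{Step 3: the tally in (i).} Partition the \nExact{} values by $u$:
\begin{itemize}
\item $u=2$ mostly from (a), (e), (f), and the cyclic covers;
\item $u=3$ from (c) in rank two and the generator bound;
\item $u=4$ and $u=5$ from the rank-three and rank-four obstructions together with their Greene-model extensions.
\end{itemize}
The counts are \nUtwo{}, \nUthree{}, \nUfour{}, \nUfive{}.

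The main obstacle is not conceptual but the integrity of the exhaustive enumerations. The rank-three and rank-four surgery forms must be complete up to mod-$2$ permutation equivalence, since a reduced form may hide the congruences. In the Greene model, every marking and every affine identification must be covered, with orientation conventions consistent under mirroring. I would therefore cross-validate in three ways. First, confirm that no knot with known $u$ is obstructed, as already done for the \nMcCoyVerified{} alternating cases and the $3{,}204$ Casson--Walker checks. Second, check that alternating cases computed through Greene's model agree with the sharp Goeritz $m_G$. Third, recompute the overlapping sets from \cite{GP} and \cite{DGKTdata}. Only then would I accept the counts.
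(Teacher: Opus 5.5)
Your proposal follows the paper's own bookkeeping: the counts come from the one-method-per-knot Table~\ref{tab:lower-summary} (\nExactLower{} exact values and \nLowerImproved{} ranges), after which the \nExactUpper{} constructions of Section~\ref{sec:eight} give $\nExactLower{}+\nExactUpper{}=\nExact{}$ exact values and $\nLowerImproved{}-10=\nImproved{}$ remaining ranges, since ten of those constructions close ranges already improved by Greene's model. Two descriptive slips do not affect the tally: the $u=3$ values also come from Greene's model ($136$ knots marked $\mathrm{G}$ in Appendix~\ref{app:u3}) and from higher cyclic covers (\nUthreeC{} knots), not only from the rank-two obstruction and the generator bound; and the construction for \kn{13n}{447} ends at $10_{91}$ and uses $u(10_{91})=1$ via \eqref{eq:upper}, rather than yielding the unknot directly.
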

\end{samepage}

The exact values and improved ranges are listed in Appendices~\ref{app:u5}--\ref{app:improve}.

The calculations recover Owens's examples. We also compare independent presentations
of the linking pairings and correction terms. For Montesinos covers, we compare
the correction terms computed from star-shaped plumbings with lens-space
formulas and with Goeritz calculations for alternating examples, obtaining the
same values. The rank-three obstruction is consistent with the $346$ knots
recorded with unknotting number three in the August snapshot.

We also compared the correction-term conditions for $2{,}776$ knots with cyclic
branched-cover homology in the Greene and Montesinos data. Adding monotonicity
and boundedness to the positive even symmetric matching conditions gave no
further obstruction to unknotting number one. Replacing symmetry by monotonicity
likewise gave the same exclusions, in agreement with the observations of Owens
and Strle \cite[Thm.~3 and the subsequent discussion]{OwensStrle}.

\flushbottom
\section{Upper bounds: methods and results}\label{sec:upper}

If $j$ crossing changes transform $K$ into a knot $J$,
then an unknotting sequence for $J$ can be appended to provide
\begin{equation}\label{eq:upper}
    u(K)\le j+u(J).
\end{equation}
In particular, if changing $j$ marked crossings of a diagram gives the unknot,
then we obtain $u(K)\le j$. Reversing a single crossing change also shows that
$|u(K)-u(J)|\le1$ for crossing neighbors. The difficulty is to find useful diagrams and
crossings, while keeping track of the knot represented at each stage.

\subsection{Finding unknotting diagrams}

Minimal diagrams give a natural starting point, but the examples of Nakanishi and Bleiler
show that they need not contain a shortest unknotting sequence \cite{Nakanishi,Bleiler}.
Even when a minimal diagram realizes $u(K)$, the particular minimal diagram in a table
may fail to do so. An exploration can therefore benefit both from moving among minimal diagrams
and from increasing the crossing number.

The counterexamples of Brittenham and Hermiller make this point concrete. They find a $20$-crossing diagram of \kn{13n}{3370} with a
crossing change to \kn{11n}{21}, whose unknotting number is equal to one \cite{BH1705}.
Their work on connected sums likewise gives explicit sequences passing through other knot
types \cite{BHadd,BH26}. For example, an explicit sequence of three crossing changes establishes
$u(4_1\mathbin{\#}9_{10})\le3$ \cite[Thm.~1.1]{BH26}. Such constructions allow
Reidemeister moves between crossing changes; they need not be visible as marked crossings
of the diagram with which the search began.

Machine learning has been used to choose useful moves in these large spaces of knot diagrams.
Gukov, Halverson, Ruehle, and Su{\l}kowski use reinforcement learning to simplify braid
representatives by braid relations and Markov moves \cite{GukovUnknot}. Applebaum et al.
use reinforcement learning in the search for unknotting sequences and hard unknot
diagrams \cite{Applebaum}. The RL unknotter of Dranowski, Kabkov, and Tubbenhauer
explores diagram simplification through Reidemeister moves and applies the resulting
procedure to upper-bound searches \cite{DKT}. These methods address the choice of
presentations and moves, which is often the main practical obstacle to finding a sequence.

A complementary approach concerns the simplification of an unknot diagram once it has
been found. A pass move reroutes an overpass or an underpass through the complementary
regions of a diagram, preserving knot type. The ReAPR algorithm combines crossing-reducing
pass moves with changes of spatial embedding and planar projection. It simplifies the
published hard unknot diagrams tested by Cantarella, Schumacher, and Shonkwiler
\cite{ReAPR}. This result motivates the application of pass moves to the simplification of knot diagrams whenever examining the diagrams obtained after some crossing changes.

\subsection{The constructions}\label{sec:construction}

Starting from a tabulated diagram, we generate equivalent diagrams by random
sequences of Reidemeister moves and flypes, represented on the
embedded signed Tait graphs of Section~\ref{sec:background}. The randomized
exploration allows up to $50$ crossings and has a default limit of $10{,}000$
additional distinct diagrams per knot. In each diagram, we first screen
crossing-change subsets by the determinant. If more subsets survive than the
reduction budget permits, we select a random sample and simplify the changed
diagrams by Reidemeister moves and pass moves. If the result is the unknot,
it gives a direct upper bound. If it is an identified knot with a known upper
bound, \eqref{eq:upper} applies. This finite exploration does not exhaust all diagrams
or crossing-change subsets; failure to find an unknotting sequence gives no
lower bound.

For each successful construction the accompanying data specify the initial and marked
diagrams, the selected crossings, and the recorded moves. The presentation diagrams below
are checked separately from the exploration that found them. The starting knot is compared
with its tabulated representative by knot-complement isometries preserving
the meridian in SnapPy \cite{SnapPy}. For a claimed unknot, we compute knot
Floer homology, which detects Seifert genus and hence identifies the unknot
when the genus is zero \cite{OSgenus}. When a construction instead passes
through a named knot, its type is checked by a meridian-preserving isometry
and its known upper bound is used in \eqref{eq:upper}.
Identification up to mirror image suffices for the unknotting number. The
deposited diagrams and crossing data allow these identifications to be
repeated independently of the search.

No new unknotting diagram with one marked crossing was found in the examined $[1,2]$ class, and the
examined $[3,4]$ class yielded no improved upper bound. The new exact values on the upper-bound side come
from the following \nExactUpper{} constructions giving upper bound two.

\subsection{Explicit upper bounds}\label{sec:eight}

Figures~\ref{fig:marked1}--\ref{fig:marked5} give crossing-change constructions
establishing $u\le2$ for nineteen knots. Eighteen diagrams have two circled
crossings whose changes give the unknot. In the diagram of \kn{13n}{447},
changing the single circled crossing gives $10_{91}$ up to mirror image.
Since $u(10_{91})=1$ \cite{KnotInfo}, this also proves $u(\kn{13n}{447})\le2$.
The fifteen diagrams with $13$ crossings are minimal; three others have
$14$ crossings, and the diagram of \kn{13n}{4237} has $18$ crossings.

For the eight knots in Figures~\ref{fig:marked1}--\ref{fig:marked2} and for
\kn{13n}{3733}, the lower bound two is already recorded in the comparison
ranges. For the other ten knots, it follows from the Greene computations
of Section~\ref{sec:sweep}. The lower and upper bounds together determine
$u=2$ for all nineteen knots. These ten joint applications account for the
decrease from \nLowerImproved{} ranges after the lower-bound calculations
to \nImproved{} in the final table.

Thirteen of these upper bounds also appear in the recently updated work of Dranowski,
Kabkov, and Tubbenhauer \cite[Table~1]{DKT}. For \kn{13n}{221},
\kn{13n}{636}, \kn{13n}{1439}, \kn{13n}{2251}, \kn{13n}{2639}, and
\kn{13n}{4025}, they give the range $[1,2]$; our lower bounds determine
$u=2$ for these six knots.

\begin{figure}[htbp]
\centering
\includegraphics[width=0.24\textwidth]{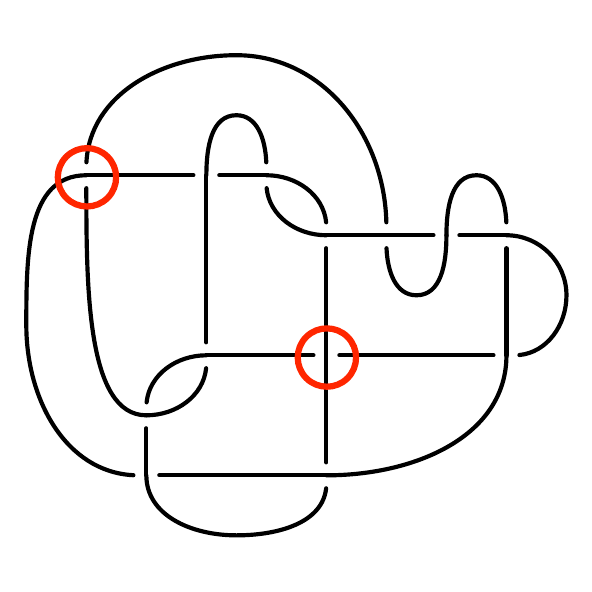}\hfill
\includegraphics[width=0.24\textwidth]{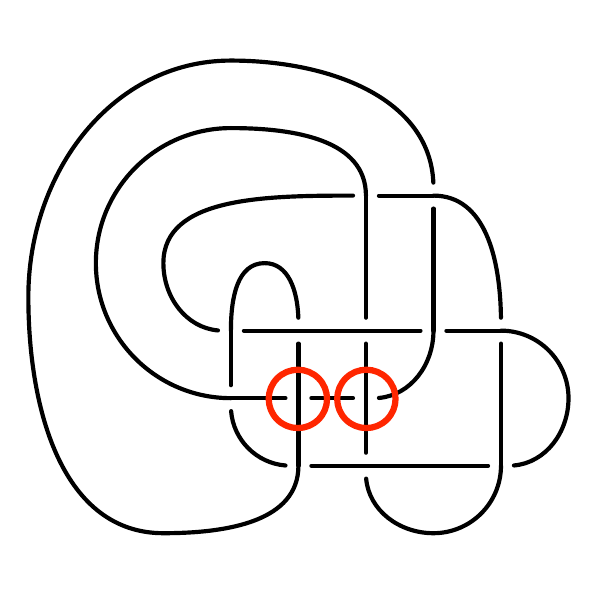}\hfill
\includegraphics[width=0.24\textwidth]{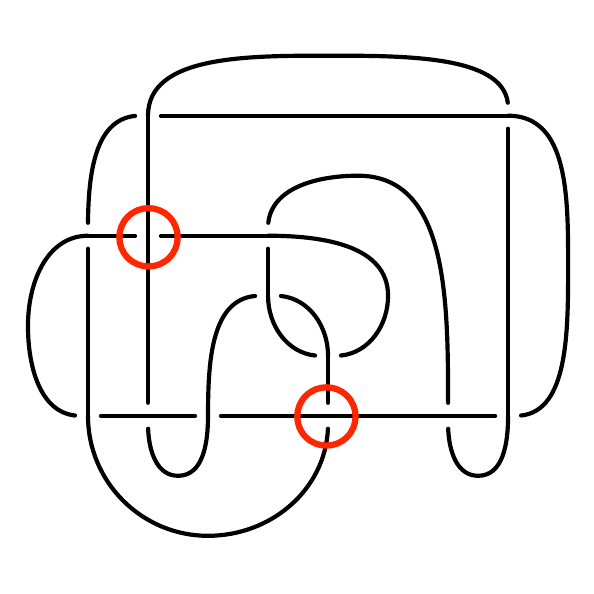}\hfill
\includegraphics[width=0.24\textwidth]{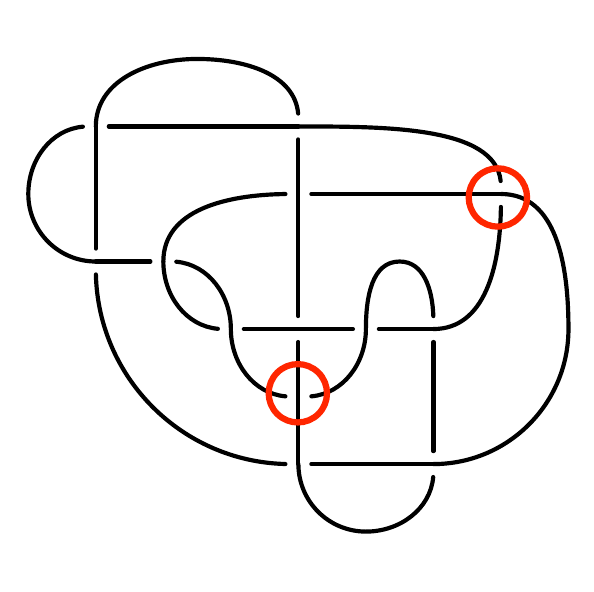}
\caption{Minimal marked diagrams of \kn{13n}{2379}, \kn{13n}{2809},
\kn{13n}{2907}, and \kn{13n}{3033}, from left to right. Changing the two circled
crossings in each diagram gives the unknot.}\label{fig:marked1}
\end{figure}

\begin{figure}[htbp]
\centering
\includegraphics[width=0.24\textwidth]{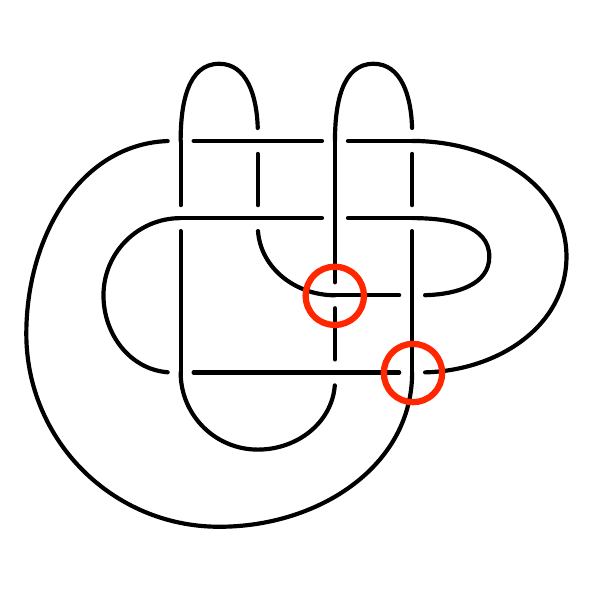}\hfill
\includegraphics[width=0.24\textwidth]{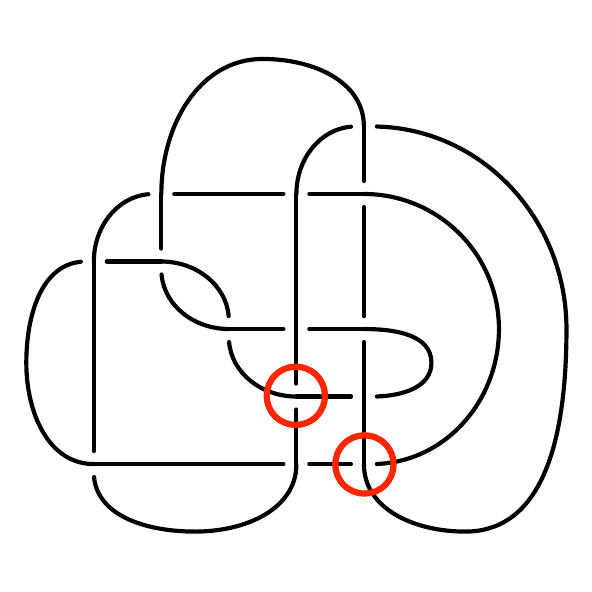}\hfill
\includegraphics[width=0.24\textwidth]{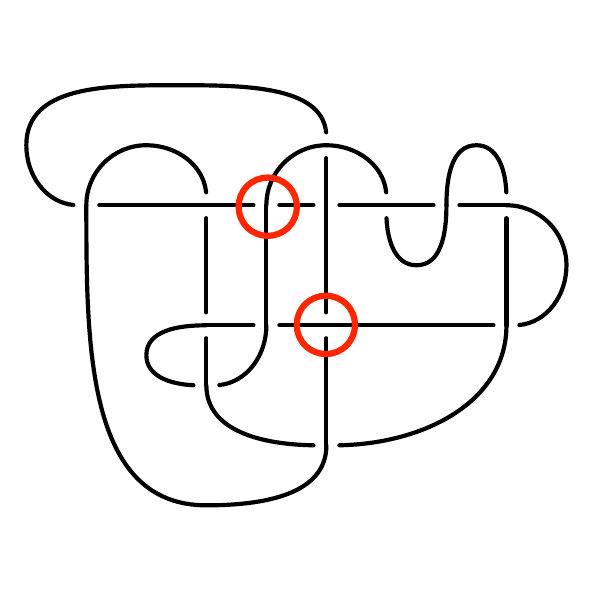}\hfill
\includegraphics[width=0.24\textwidth]{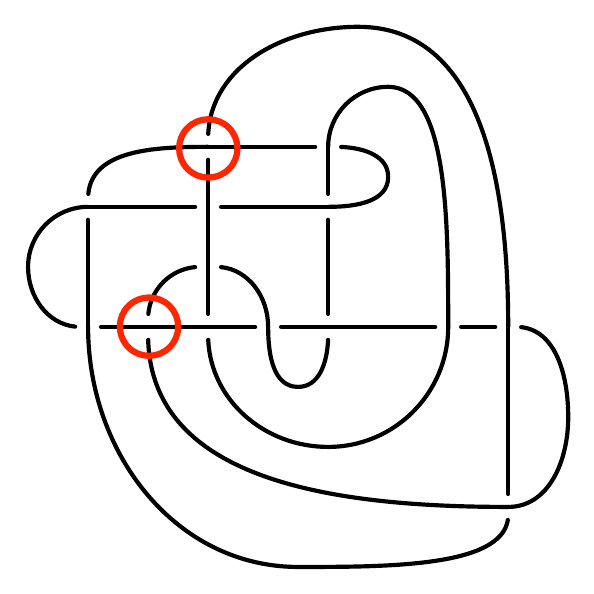}
\caption{Marked diagrams of \kn{13n}{3589}, \kn{13n}{30}, \kn{13n}{45}, and
\kn{13n}{80}, from left to right. The first has $13$ crossings and the others have $14$.
Changing the two circled crossings gives the unknot.}\label{fig:marked2}
\end{figure}

\begin{figure}[htbp]
\centering
\includegraphics[width=0.24\textwidth]{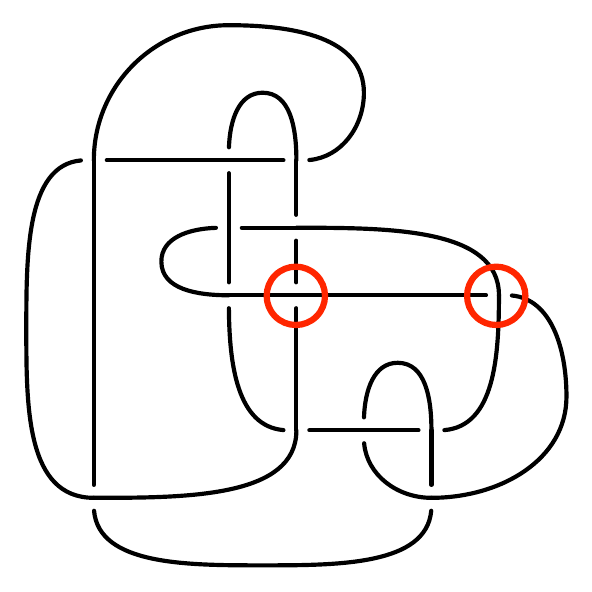}\hfill
\includegraphics[width=0.24\textwidth]{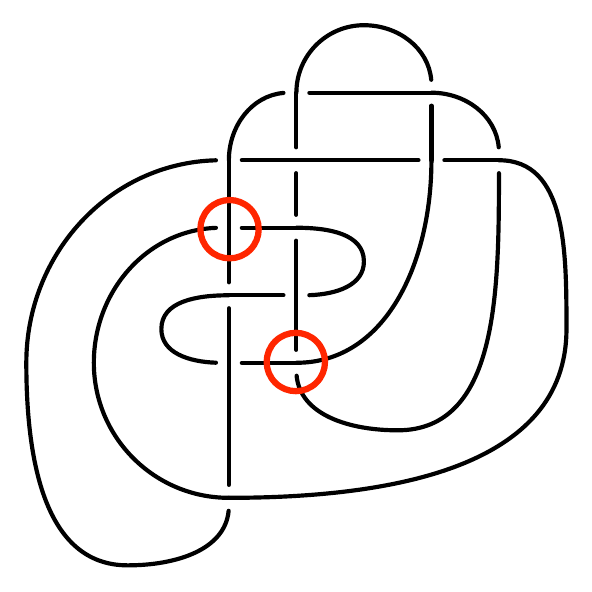}\hfill
\includegraphics[width=0.24\textwidth]{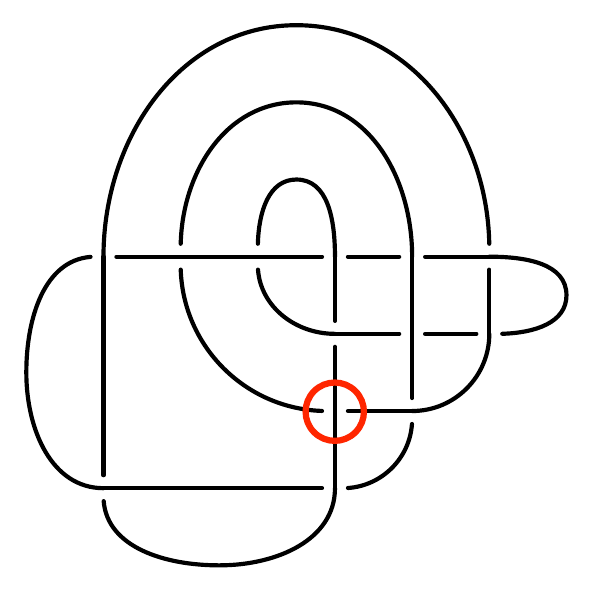}\hfill
\includegraphics[width=0.24\textwidth]{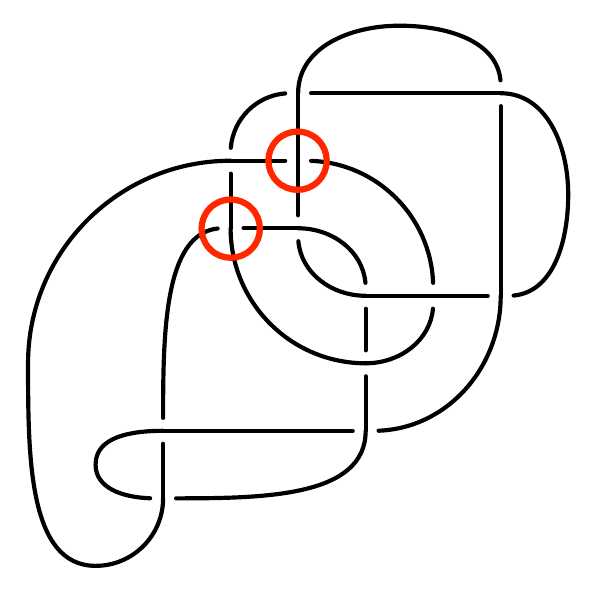}
\caption{Minimal marked diagrams of \kn{13n}{221}, \kn{13n}{436},
\kn{13n}{447}, and \kn{13n}{636}, from left to right. Changing the circled
crossing in \kn{13n}{447} gives $10_{91}$, whose unknotting number is one.
In each of the other diagrams, changing both circled crossings gives the unknot.}\label{fig:marked3}
\end{figure}

\begin{figure}[htbp]
\centering
\includegraphics[width=0.24\textwidth]{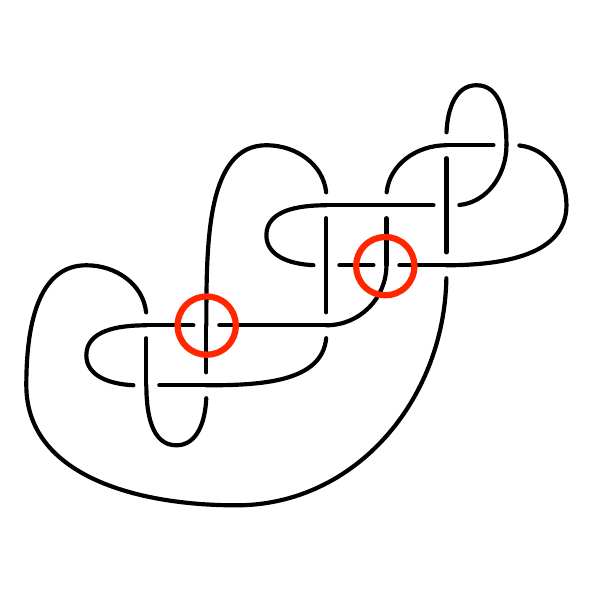}\hfill
\includegraphics[width=0.24\textwidth]{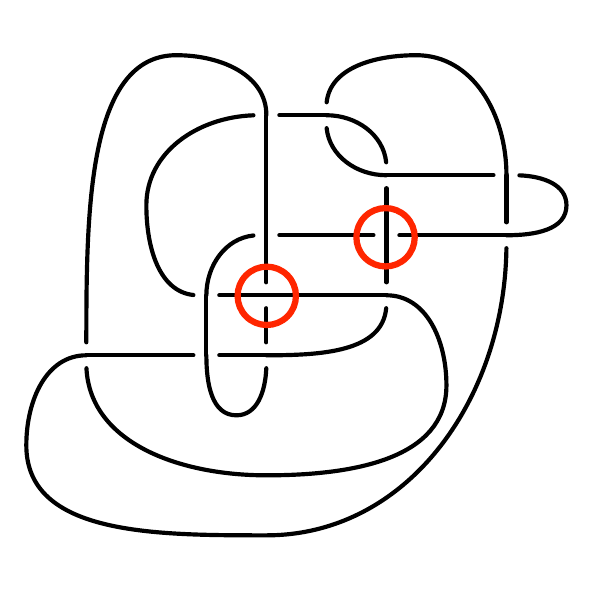}\hfill
\includegraphics[width=0.24\textwidth]{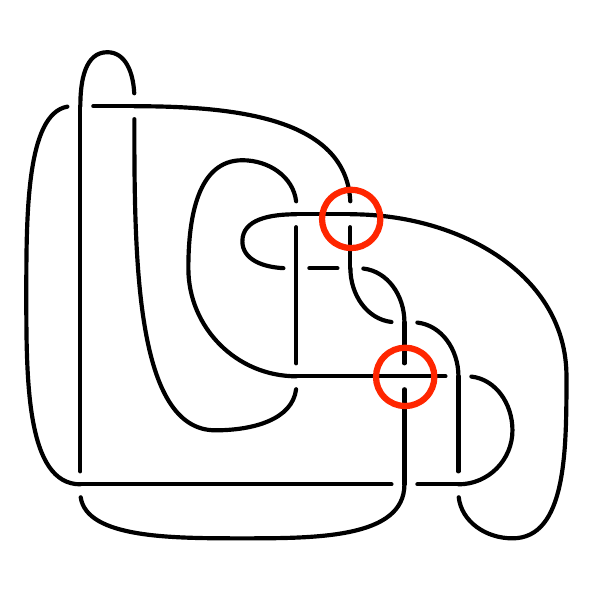}\hfill
\includegraphics[width=0.24\textwidth]{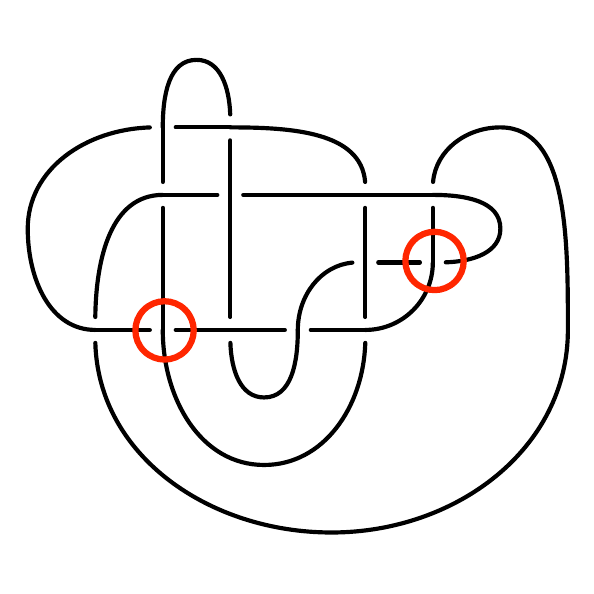}
\caption{Minimal marked diagrams of \kn{13n}{1439}, \kn{13n}{2251},
\kn{13n}{2639}, and \kn{13n}{3108}, from left to right. Changing the two
circled crossings in each diagram gives the unknot.}\label{fig:marked4}
\end{figure}

\begin{figure}[htbp]
\centering
\includegraphics[width=0.24\textwidth]{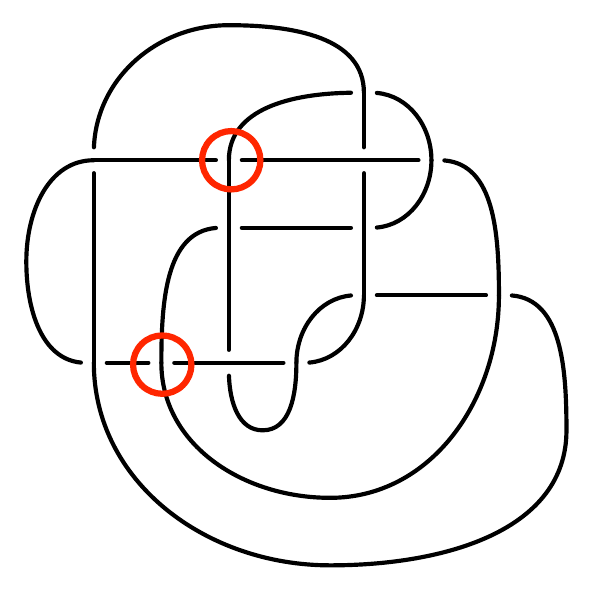}\hspace{0.04\textwidth}
\includegraphics[width=0.24\textwidth]{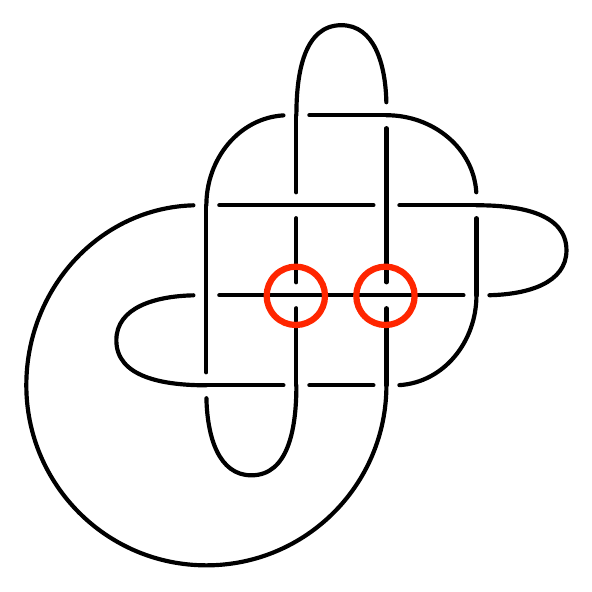}\hspace{0.04\textwidth}
\includegraphics[width=0.24\textwidth]{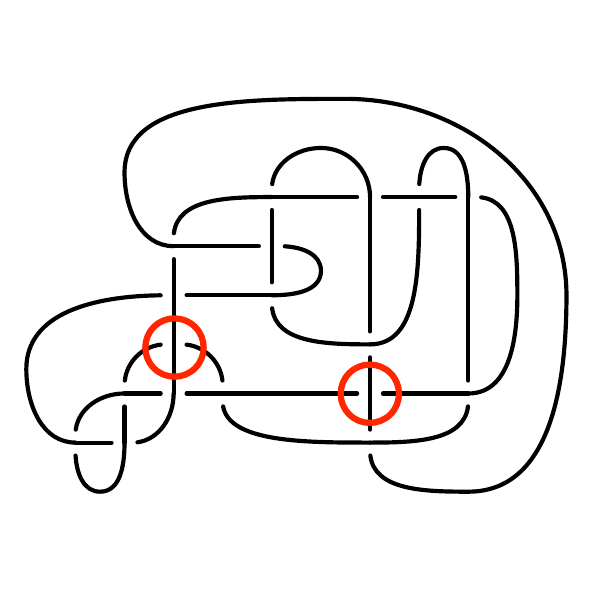}
\caption{Marked diagrams of \kn{13n}{3733}, \kn{13n}{4025}, and
\kn{13n}{4237}, from left to right. The first two are minimal diagrams
with $13$ crossings; the last has $18$ crossings. Changing the two circled
crossings in each diagram gives the unknot.}\label{fig:marked5}
\end{figure}

Appendix~\ref{app:diagrams} gives the planar diagram codes and marks the
crossings to be changed, with the resulting knot specified in each case.
These codes describe the constructions independently of the drawings.

The minimal diagrams also illustrate why the choice of diagram matters.
For \kn{13n}{2809}, no pair of changes in the reference diagram gives the
unknot, whereas the displayed minimal diagram contains such a pair. There
is a related distinction for \kn{13n}{2251} and \kn{13n}{4025}: all thirteen
crossing changes in each reference diagram give knots with unknotting
number two. In the displayed minimal diagrams, changing either member of
the marked pair gives a knot with unknotting number one. Thus even the set
of crossing-change neighbours can depend on the minimal diagram for a
non-alternating knot. The identified neighbours and their bounds are
recorded with the constructions.

For each of the ten knots in
Figures~\ref{fig:marked3}--\ref{fig:marked5} shown with $13$ crossings,
the construction gives a neighbour with $u=1$ in a minimal diagram.
Consequently $u^s_{\mathrm{BJ}}=u=2$ for these knots. In the case of
\kn{13n}{447}, that neighbour is $10_{91}$; the second crossing change is
made after replacing it by an unknotting diagram.

We also verify the upper bound $u(\kn{13n}{1587})\le2$ using a braid representative with
one crossing change to $10_{113}$, whose unknotting number is one. This crossing-change
construction was already given by Brittenham and Hermiller \cite[\S3]{BH1705}.
Our accompanying certificate verifies the diagrams and the intermediate knot.
This supplies the upper bound two independently of the Greene calculation in
Section~\ref{sec:sweep}; together the two bounds determine $u(\kn{13n}{1587})=2$.

\subsection{Crossing changes and Tait graphs}\label{sec:electrical}

The invariants used for lower bounds can also guide the choice of a crossing for an upper
bound. Let $D$ be a reduced alternating diagram, choose the checkerboard form $G$ to be
positive definite, and let $\Gamma$ be its Tait graph. All edge signs are then $+1$.
For an edge $e$, set
\[
    \Reff(e)=\frac{\tau_e(\Gamma)}{\tau(\Gamma)},
\]
where $\tau(\Gamma)$ is the number of spanning trees and $\tau_e(\Gamma)$ is the number
containing $e$. This ratio is the effective resistance between the endpoints of $e$ when
every edge has unit resistance \cite{Bollobas}. Since a reduced diagram gives neither
a loop nor a bridge at $e$, one has $0<\Reff(e)<1$.

For the checkerboard surface $F$ whose Gordon--Litherland form is $G$, let type I and type II denote the two local orientation
types of a crossing in the Gordon--Litherland convention \cite{GL}. Their signature
formula is
\begin{equation}\label{eq:gl}
    \sigma(K)=\operatorname{sign}(G)-\mu(F),\qquad
    \mu(F)=\sum_{c\text{ of type II}}\eta(c).
\end{equation}
Thus type II crossings contribute to the correction term, while type I crossings do not.

\begin{proposition}\label{prop:sigma}
Let $c$ be a crossing of $D$, let $e$ be its Tait edge, and let $K_c$ be the knot after
the crossing change. Then
\begin{equation}\label{eq:det-change}
    \det K_c=\det K\,|1-2\Reff(e)|.
\end{equation}
For a type I crossing, the signature decreases by two if $\Reff(e)>1/2$ and is unchanged
if $\Reff(e)<1/2$. For a type II crossing, it increases by two if $\Reff(e)<1/2$ and
is unchanged if $\Reff(e)>1/2$. The value $\Reff(e)=1/2$ cannot occur.
\end{proposition}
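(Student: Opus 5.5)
The plan is to realize the crossing change as a rank-one modification of the positive-definite Goeritz matrix, and then read off both the determinant and the signature from that modification. Fix the checkerboard colouring for which $G$ is positive definite, so every edge of $\Gamma$ has sign $+1$. Changing the crossing $c$ leaves the diagram's projection and shading unchanged and only reverses the sign of the edge $e$. Let $b\in\bbz^{m}$ be the reduced incidence vector of $e$ (entries $\pm1$ at its endpoints, with the deleted vertex removed). The Goeritz matrix of the changed diagram, computed for the same shaded surface, is then
\[
    G_c=G-2bb^T .
\]

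\textbf{Determinant.} The matrix determinant lemma gives
\[
    \det G_c=\det G\,\bigl(1-2\,b^TG^{-1}b\bigr).
\]
Since $G$ is the reduced Laplacian of $\Gamma$, Kirchhoff's theorem identifies $b^TG^{-1}b$ with the effective resistance between the endpoints of $e$. This equals $\tau_e(\Gamma)/\tau(\Gamma)=\Reff(e)$ by the standard matrix-tree argument \cite{Bollobas}: delete or contract $e$ and compare the cofactors. Taking absolute values and using $|\det G|=\det K$, $|\det G_c|=\det K_c$ gives \eqref{eq:det-change}. The value $\Reff(e)=1/2$ would force $\det K_c=0$. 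This is impossible, because $K_c$ is a knot and so $\det K_c=|\Delta_{K_c}(-1)|$ is odd.

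\textbf{Signature of the form.} Next I compare $\operatorname{sign}(G_c)$ with $\operatorname{sign}(G)=m$. Because $G_c$ is a rank-one negative perturbation of a positive-definite matrix, eigenvalue interlacing shows that $G_c$ has at most one non-positive eigenvalue. The matrix $G_c$ is nonsingular, and its determinant has the sign of $1-2\Reff(e)$. Hence:
\begin{itemize}
\item if $\Reff(e)<1/2$, then $G_c$ is positive definite and $\operatorname{sign}(G_c)=\operatorname{sign}(G)$;
\item if $\Reff(e)>1/2$, then $G_c$ has exactly one negative eigenvalue and $\operatorname{sign}(G_c)=\operatorname{sign}(G)-2$.
\end{itemize}

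\textbf{Correction term.} I then apply \eqref{eq:gl} to both diagrams, using the same checkerboard surface $F$ and its changed version $F_c$. The type (I or II) of a crossing depends only on how the oriented strands meet the shaded region. Since a crossing change does not alter the projection or the orientations, $c$ keeps its type, while its incidence number $\eta(c)$ reverses sign. All other crossings are untouched. So:
\begin{itemize}
\item if $c$ is type I, then $\mu(F_c)=\mu(F)$, and $\sigma$ changes exactly as $\operatorname{sign}$ does;
\item if $c$ is type II, then $\mu(F_c)=\mu(F)-2\eta(c)$.
\end{itemize}

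In the type II case I must check that $\eta(c)=+1$ for an edge of Goeritz sign $+1$ in the Gordon--Litherland convention, so that $\mu$ drops by $2$ and $\sigma$ rises by $2$ relative to the change in $\operatorname{sign}$. Combining with the previous step then gives the stated cases:
\begin{itemize}
\item type II with $\Reff(e)<1/2$: $\operatorname{sign}$ is unchanged and $\mu$ drops by $2$, so $\sigma$ increases by two;
\item type II with $\Reff(e)>1/2$: $\operatorname{sign}$ drops by $2$ and $\mu$ drops by $2$, so $\sigma$ is unchanged.
\end{itemize}

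The main obstacle is this sign bookkeeping: matching the Goeritz sign of $e$ with $\eta(c)$ and the type convention of \cite{GL}, together with the paper's orientation convention for $\sigma$. I would settle it by testing the formula on the trefoil, whose Tait graph is a triangle or a triple edge. There every edge has $\Reff=2/3$ or $1/3$, and the signature change of each crossing change is known.
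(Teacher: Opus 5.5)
Your proposal is correct and follows essentially the same route as the paper: the rank-one update $G_c=G-2bb^T$, the matrix determinant lemma with $b^TG^{-1}b=\Reff(e)$, oddness of the determinant to exclude $\Reff(e)=1/2$, the signature of a rank-one negative perturbation of a positive-definite form, and \eqref{eq:gl} with $\mu$ dropping by two exactly at type II crossings. The sign check you flag as the main obstacle follows from the definitions, so no trefoil test is needed: Gordon and Litherland define the Goeritz entries by $g_{ij}=-\sum\eta(c)$, which is the paper's ``minus the sum of the edge signs'' convention, so every edge of the positive-definite form has $\eta(c)=+1$ and the crossing change sends it to $-1$.
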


\begin{proof}
If $x$ is the difference of the coordinate vectors of the two regions incident to $e$,
with the deleted coordinate set to zero, the changed Goeritz matrix is
$G'=G-2xx^T$. The matrix determinant lemma gives
\[
    \det G'=\det G(1-2x^TG^{-1}x),\qquad x^TG^{-1}x=\Reff(e).
\]
This proves \eqref{eq:det-change}. The determinant of a knot is odd, so $G'$ is
nonsingular and $\Reff(e)\ne1/2$. As $G$ is positive definite, the rank-one change either
preserves its signature or decreases it by two, according to the sign of
$1-2\Reff(e)$. The type of the crossing is unchanged, while its Goeritz sign reverses.
Consequently $\mu$ decreases by two at a type II crossing and is unchanged at a type I
crossing. Substitution in \eqref{eq:gl} gives the signature statements.
\end{proof}

The same update yields the linking pairing of the changed knot. In the convention in
which it is represented by $G'^{-1}$, the class of $x$ has self-linking
\begin{equation}\label{eq:link-change}
    x^TG'^{-1}x=\frac{\Reff(e)}{1-2\Reff(e)}\pmod{\bbz},
\end{equation}
by the Sherman--Morrison formula. When $[x]$ generates the discriminant group,
\eqref{eq:lickorish} can therefore be tested directly. In general, one computes the full
linking pairing and applies the cyclicity and self-linking tests. This combines the signature and linking-pairing tests
without first identifying the changed knot.

These formulas explain why a crossing that lowers the absolute signature need not lower
the unknotting number. If $u(K)=|\sigma(K)|/2$, such a signature drop is necessary, but
the changed knot may still fail the conditions on the linking pairing or correction terms for the
desired smaller unknotting number.

Proposition~\ref{prop:sigma} also gives a diagrammatic way to evaluate the
signature condition used in Section~\ref{sec:minimal}: a crossing can lead
to a knot with $u=1$ only if the resulting signature has absolute value at
most two. In the implementation, we compute this signature directly from
the changed diagram using the Gordon--Litherland formula. Across the
$69{,}632$ recorded crossing changes, the stored signatures and determinants
agree with Proposition~\ref{prop:sigma}, with no exceptions. The stored
self-linking values also agree with \eqref{eq:link-change}.

\section{Bernhard--Jablan Conjecture: Revisited}\label{sec:bj}

\subsection{An Explicit Counterexample}\label{sec:bj-counterexample}

The examples \kn{13n}{2251} and \kn{13n}{4025} in
Section~\ref{sec:eight} show why an obstruction for the neighbours of a
single minimal diagram need not decide the strong equality. Brittenham
and Hermiller's construction accounts for all minimal diagrams of
\kn{13n}{3370}. They showed that
$u(\kn{13n}{3370})\le2$ and examined $24$ minimal diagrams representing all possibilities up to flypes \cite[\S2.4]{BH1705}.
Among the knots obtained by changing one crossing in these diagrams, only
\kn{12n}{288}, \kn{12n}{491}, and \kn{12n}{501} could have unknotting number one.
None of these three knots has a minimal diagram with an unknotting crossing
\cite[Thm.~1.3 and \S2.4]{BH1705}.

Our Montesinos computations in Section~\ref{sec:lower} give
$u(\kn{12n}{288})=u(\kn{12n}{501})=2$, in agreement with the calculations of
Dranowski et al.\ \cite{DGKTdata}. The remaining question in this construction is
whether $u(\kn{12n}{491})=1$ or $2$. We resolve it by computing correction terms
from a knot diagram, using the theory of Section~\ref{sec:greene-background}.
The same method also determines $u(\kn{13n}{3370})$.

The archived reduced Khovanov homology over $\mathbb F_2$ has dimension $69$ for
\kn{12n}{491} and $33$ for \kn{13n}{3370} \cite{KnotInfo}. These dimensions equal
the respective determinants. Equation~\eqref{eq:kh-lspace} therefore shows that
both double branched covers are $L$-spaces. We then apply Greene's model to
compute the correction-term data needed for the surgery obstruction.
For \kn{12n}{491}, we also obtain a quasi-alternating resolution tree, beginning
with $69=48+21$ and ending in unknots and non-split alternating links. The
archived tree gives a separate certificate of the $L$-space property.

For each marking and checkerboard colouring, we enumerate the Kauffman states
and their solitary-state gradings. Odd determinant identifies a boundary
$\mathrm{Spin}^c$ structure with its first Chern class, with the spin structure
at zero. The grading candidates from different markings can therefore be compared
using isomorphisms of the cyclic discriminant groups. We consider every isomorphism
preserving the linking pairing and retain those compatible with the candidate
gradings in all classes. If several remain, we take the union of their candidate
sets before intersecting with the information from other markings. We also impose
the conjugation symmetry $d(Y,\mathfrak s)=d(Y,\overline{\mathfrak s})$.
Every marking is included, and we repeat the intersections until the candidate
sets stop changing. In the deposited computations this takes two passes for
each knot; no marking is discarded because its identification is ambiguous.
The actual identification is always among the retained possibilities, so these
operations preserve the actual correction term in every class. This makes precise
the use of different markings to constrain correction terms
\cite[\S7.1]{Greene}.

For \kn{12n}{491}, the $48$ choices of marked edge and colouring determine all
$69$ correction terms. For \kn{13n}{3370}, the $52$ choices determine $29$ of the
$33$ values. The remaining four classes form two conjugate pairs, each with three
possible gradings. Thus there are nine candidate vectors, one of which is the
actual correction-term vector. Appendix~\ref{app:bj-correction-terms} gives the
labelled values and candidate sets, so the obstruction can be checked without
reconstructing the state enumeration.

If either knot had unknotting number one, the Montesinos trick would identify
one orientation of its cover with $D/2$ surgery on a knot in $S^3$, where
$D=69$ or $33$. Under some affine identification $i\mapsto ai+b$ of the cyclic
labels, the correction terms would therefore satisfy the Ni--Wu formula
\eqref{eq:niwu}. We test every unit $a$ modulo $D$, every translation $b$, and
both orientations. This includes all identifications allowed by the linking
pairing, without imposing that restriction to reduce the search. There are
$2\varphi(D)D$ affine comparisons per candidate vector, where $\varphi$ is
Euler's totient function. Thus the $D=69$ calculation makes
$2\cdot44\cdot69=6{,}072$ comparisons. For $D=33$, there are
$2\cdot20\cdot33=1{,}320$ per vector, or $11{,}880$ over the nine vectors.
In every case, either the differences from the lens-space correction
terms fail to be nonnegative even integers, or the labels corresponding to the
same $V_j$ give inconsistent values. For \kn{13n}{3370}, all nine candidate
vectors fail. Hence neither knot has unknotting number one.

Brittenham and Hermiller's upper bounds then give
$u(\kn{12n}{491})=u(\kn{13n}{3370})=2$
\cite[Thm.~1.3(a)--(b)]{BH1705}. Their minimal-diagram calculation now shows
that every crossing change in a minimal diagram of \kn{13n}{3370} gives a knot
of unknotting number at least two. Recall that the \emph{strong Bernhard--Jablan
unknotting number} $u^s_{\mathrm{BJ}}(K)$ is one plus the least unknotting number
among all such crossing neighbours \cite{BH1705}. Since \kn{12n}{491} is one of
these neighbours, that least value is two. We obtain the following result.

\begin{theorem}\label{thm:bj-counterexample}
The four knots \kn{12n}{288}, \kn{12n}{491}, \kn{12n}{501}, and
\kn{13n}{3370} all have unknotting number two. Moreover,
\[
    u(\kn{13n}{3370})=2<3=u^s_{\mathrm{BJ}}(\kn{13n}{3370}),
\]
so \kn{13n}{3370} fails the strong Bernhard--Jablan equality and is a
counterexample to the original conjecture.
\end{theorem}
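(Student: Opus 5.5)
The proof assembles upper bounds from Brittenham and Hermiller with the lower bounds of Section~\ref{sec:lower} and Section~\ref{sec:bj-counterexample}, and then feeds these values into their minimal-diagram analysis. First I establish $u=2$ for each of the four knots. For the upper bounds I cite \cite[Thm.~1.3(a)--(b)]{BH1705}. They give $u(\kn{13n}{3370})\le2$ through a $20$-crossing diagram with one crossing change to \kn{11n}{21}, whose unknotting number is one. The recorded upper bounds give $u\le2$ for the three $12$-crossing knots.

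For the lower bounds I rule out $u=1$ for each knot; all four are nontrivial. For \kn{12n}{288} and \kn{12n}{501}, which are Montesinos, I use the plumbing correction terms of Section~\ref{sec:montesinos}. I check that no affine identification with $\bbz/D$ realizes the Ni--Wu pattern \eqref{eq:niwu} in either orientation. For \kn{12n}{491} and \kn{13n}{3370}, the $L$-space property follows from \eqref{eq:kh-lspace}, since the reduced mod-$2$ Khovanov rank equals the determinant ($69$ and $33$). Greene's model then confines each correction term to its solitary-state gradings. Intersecting over all markings, with conjugation symmetry imposed, determines the full vector for \kn{12n}{491}. For \kn{13n}{3370} it leaves nine candidate vectors. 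I then run the exhaustive test from the discussion preceding the theorem: every unit $a$, every translation $b$, both orientations, and every candidate vector must fail \eqref{eq:niwu}. Because the true vector and the true identification lie among those tested, the Montesinos trick (the case $r=1$ of Theorem~\ref{thm:montesinos}) excludes $u=1$.

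For the second statement I use the inequality $u\le u^s_{\mathrm{BJ}}$ together with Brittenham and Hermiller's enumeration of the $24$ minimal diagrams up to flypes \cite[\S2.4]{BH1705}. Every knot reached by one crossing change in a minimal diagram of \kn{13n}{3370} is either one of the three $12$-crossing knots or a knot they already showed has unknotting number at least two. The first part gives the three $12$-crossing knots $u=2$, so every element of $N_{\min}(\kn{13n}{3370})$ has $u\ge2$. Since \kn{12n}{491} is such a neighbour and has $u=2$, the minimum is exactly two, which gives $u^s_{\mathrm{BJ}}(\kn{13n}{3370})=3>2=u(\kn{13n}{3370})$.

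The main obstacle is the lower bound for \kn{13n}{3370}, because there its correction terms are not fully determined. Soundness needs two checks. First, the union-then-intersect procedure over markings must keep the actual value in every class. Second, the grading normalization from \eqref{eq:greene-grading} must be fixed correctly. I would check the normalization against the spin value \eqref{eq:lspace-spin} and the Casson--Walker sum \eqref{eq:casson-sum}. The affine search itself is a finite, mechanical check.
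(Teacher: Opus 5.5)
Your proposal is correct and follows essentially the same route as the paper. It uses Brittenham--Hermiller's upper bounds, the Montesinos correction terms for \kn{12n}{288} and \kn{12n}{501}, and Greene's model, which is complete for \kn{12n}{491} and leaves nine candidate vectors for \kn{13n}{3370}, tested against \eqref{eq:niwu} under all affine identifications and both orientations; it then applies their minimal-diagram enumeration with \kn{12n}{491} as a neighbour realizing the minimum two. The inequality $u\le u^s_{\mathrm{BJ}}$ you invoke is not actually needed, since $u^s_{\mathrm{BJ}}(\kn{13n}{3370})=3$ follows directly from the definition once every neighbour has $u\ge2$ and one has $u=2$.
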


The theorem combines Brittenham and Hermiller's enumeration
\cite[Thm.~1.3(c)]{BH1705} with the three twelve-crossing unknotting numbers
and the matching lower bound for \kn{13n}{3370} obtained here.

The state gradings agree with the Ozsv\'ath--Szab\'o formula
on alternating examples and with independent plumbing calculations for $8_{20}$,
$9_{43}$, and $9_{44}$. Repeating the calculation with another minimal diagram of
\kn{12n}{491} gives the same correction terms up to relabelling.
The obstruction admits the known examples
$3_1$, $4_1$, $5_2$, $8_{20}$, and $9_{44}$ of unknotting number one and excludes
that value for $5_1$, $7_4$, and $9_{43}$. All Goeritz signatures and grading
calculations for the two target knots use exact arithmetic.

We next consider $K=\kn{13n}{1587}$, the second example in \cite[\S3]{BH1705}.
Brittenham and Hermiller proved $u^w_{\mathrm{BJ}}(\kn{13n}{1587})=3$ and
constructed a crossing change from a nonminimal diagram to $10_{113}$, which
has unknotting number one. This gives the upper bound two. The deposited
certificate verifies their construction as described in Section~\ref{sec:eight}.

The calculation of Section~\ref{sec:sweep} gives the matching lower bound.
The reduced Khovanov homology over $\mathbb F_2$ has rank $75=\det K$, so the
double branched cover is an $L$-space. Comparing all $52$ choices of marked
edge and colouring determines its $75$ correction terms. The Ni--Wu test
considers both orientations, all $\varphi(75)=40$ units $a$ modulo $75$, and
all $75$ translations $b$ in the affine relabelling $i\mapsto ai+b$.
There are therefore $2\cdot40\cdot75=6{,}000$ comparisons, and every one
fails \eqref{eq:niwu}. Thus $u(K)\ne1$, and combining this obstruction
with the published upper bound gives $u(\kn{13n}{1587})=2$.

This determines the unknotting number of Brittenham and Hermiller's second
example, for which the weak equality was already known to fail
\cite[\S3]{BH1705}. Its status under the strong equality depends on whether
a crossing change in some minimal diagram gives a knot of unknotting number
one; this remains undetermined.

\subsection{Alternating Examples}\label{sec:minimal}

The alternating example \kn{14a}{2539} of Brittenham and Hermiller lies
outside our table of knots with at most $13$ crossings and has
$u\le3<4=u^w_{\mathrm{BJ}}$ \cite[\S3]{BH1705}. By Lemma~\ref{lem:flype},
its minimal-diagram crossing neighbours can be found from a single reduced
alternating diagram. The deposited diagram identifications give precisely
\[
    10_{11},\quad \kn{12a}{178},\quad \kn{12a}{183},\quad
    \kn{12a}{196},\quad \kn{12a}{684},\quad 8_6\mathbin{\#}4_1.
\]
All have unknotting number at least two: the five prime knots belong to
Appendix~\ref{app:dichotomy}, and Scharlemann's theorem excludes unknotting
number one for the composite knot \cite{Scharlemann}. In particular, this
reduced alternating diagram has no unknotting crossing. If the knot had
unknotting number one, McCoy's theorem would require such a crossing, so
$u(\kn{14a}{2539})\ge2$ \cite{McCoy}. The stronger bounds on its six
neighbours also give $u^s_{\mathrm{BJ}}(\kn{14a}{2539})\ge3$.
If $u(\kn{14a}{2539})=2$, this knot is therefore an alternating counterexample.
The present calculations leave
its value in $\{2,3\}$, and the five prime neighbours connect this example to
the larger collection considered below.

We next consider an archived collection of \nDataKnots{} alternating knots with at most $13$ crossings whose
unknotting number is known in our table and lies between one and four. For each knot,
we examine every crossing of the reference diagram. Each of these diagrams has at least one crossing whose change
lowers the unknotting number by one. The resulting knot at such a crossing is identified
by diagram comparison or knot-complement isometry. Hence the strong
Bernhard--Jablan equality holds for each knot in this collection.

After the lower-bound computations,
\nOpenTwoThreeAlt{} alternating knots retain range $[2,3]$. If one of them has $u=2$
and satisfies the Bernhard--Jablan conjecture, a crossing change in a minimal diagram must
give a knot with $u=1$. By Lemma~\ref{lem:flype}, the same possibility must occur in the
reference diagram. The changed knot must have $|\sigma|\le2$, must pass the known
obstructions to $u=1$, and must be prime by Scharlemann's theorem \cite{Scharlemann}.
The signature condition is the one discussed after
Proposition~\ref{prop:sigma}; it is evaluated directly on the changed diagram.
We call a crossing not excluded by these conditions a \emph{candidate crossing}.

For \nDichotomy{} knots, every crossing is excluded by the recorded invariant tests and
verified knot identifications. By Lemma~\ref{lem:flype}, no crossing change in
any minimal diagram of these knots gives a knot of unknotting number one.
Their names appear in Appendix~\ref{app:dichotomy}.
A further \nDichotomyHeuristic{} diagrams have no apparent candidate, but one or more exclusions rely on
an unverified composite-knot identification; these are excluded from that list.

For a knot $K$ in the list, the definition of $u^s_{\mathrm{BJ}}$ gives
$u^s_{\mathrm{BJ}}(K)\ge3$. Any such knot with $u(K)=2$ would therefore be
an alternating counterexample to the strong equality. Their unknotting
numbers remain in $[2,3]$, so they are not included in the \nExact{} exact
determinations.

The remaining \nWithCandidates{} knots have candidate crossings leading to
\nChildren{} distinct non-alternating knots with range $[1,2]$ or $[1,3]$.
For each crossing neighbor, it remains to determine whether its unknotting number
is one. A positive answer gives a matching upper bound for the parent knots; a
negative answer removes the corresponding candidates. The search for matching upper
bounds therefore focuses on these questions about unknotting number one.

\raggedbottom
\section{Conclusion}\label{sec:conclusion}

The methods applied here determine \nExact{} unknotting numbers and narrow
\nImproved{} further ranges relative to the archived comparison ranges
of Section~\ref{sec:lower}. Lower bounds supply \nExactLower{} exact values;
crossing-change constructions supply the other \nExactUpper{}, including
ten for which our lower and upper bounds are both needed. The results include
\nUfour{} values equal to four and \nUfive{} equal to five, with the methods
recorded in the appendices.

The signature restricts the signs of a possible sequence, while the homology
and linking pairing of the branched cover constrain its surgery description.
Correction terms restrict the definite filling. For \kn{12n}{457}, reduced
Floer homology gives a further obstruction even though the correction terms
admit the necessary surgery pattern.

Using Greene's model, we compute correction-term candidates for a larger
collection beyond alternating and Montesinos knots \cite{Greene}. Once the
$L$-space property is established,
it places each correction term among finitely many solitary-state gradings.
Comparing markings and imposing the Casson--Walker sum constraint can
determine the correction terms exactly, as for \kn{13n}{1619} and
\kn{13n}{4876}. The same obstructions can also succeed without a complete
calculation of the correction terms, provided every remaining candidate
fails the surgery conditions. Under the
cyclicity and signature hypotheses of Section~\ref{sec:sweep}, the completed
calculations give \nSweepNewExact{} additional exact values. This contribution
combines the existing diagrammatic model with the surgery obstructions
\cite{Owens,NiWu}, making them applicable to these further branched covers.

The nineteen upper-bound constructions use diagrams with $13$, $14$, or
$18$ crossings. Eighteen give the unknot directly by changing two marked
crossings, while the construction for \kn{13n}{447} passes through $10_{91}$.
For \kn{13n}{2251} and \kn{13n}{4025}, the displayed minimal diagrams admit a
crossing change to a knot with $u=1$, although every crossing change in the
reference diagram gives a knot with $u=2$. These examples make the role of
the chosen diagram explicit in the computation of upper bounds.

In recent work that appeared while this paper was in preparation, Gebel and
Prangley obtained \nGPExact{} values equal to three and \nGPImproved{} improved
lower bounds for alternating knots using Owens's obstruction and correction
terms \cite{GP}. Our independently obtained results agree with all of them.
These \nGPExact{} exact values also agree with the \KI{} snapshot retrieved
on 9 September 2026. The other \nLatestUnresolvedExact{} exact determinations
remain undetermined in that snapshot, and \nLatestRangeImprovements{} of our
remaining ranges are narrower than its recorded bounds.

The computations also determine $u(\kn{13n}{1587})=2$, below its weak
Bernhard--Jablan number three established by Brittenham and Hermiller
\cite[\S3]{BH1705}. This gives an exact value for their second example,
while its status under the strong equality remains undetermined.

In their original four-knot construction, we identify \kn{13n}{3370} as
an explicit counterexample to the Bernhard--Jablan conjecture. The knots
\kn{12n}{288}, \kn{12n}{491}, \kn{12n}{501}, and \kn{13n}{3370} all have
unknotting number two. Combining these values with Brittenham and Hermiller's
minimal-diagram relation \cite[Thm.~1.3(c)]{BH1705} gives
\[
    u(\kn{13n}{3370})=2<3=u^s_{\mathrm{BJ}}(\kn{13n}{3370}).
\]
Thus no crossing change in any minimal diagram of this knot reduces its
unknotting number, although two crossing changes suffice in a suitable diagram.

An alternating counterexample remains to be found. For the \nDichotomy{}
knots of Section~\ref{sec:minimal}, every crossing change in a minimal
diagram gives a knot of unknotting number at least two, so any member with
$u=2$ would provide one. Their unknotting numbers remain in $[2,3]$.
Future work will focus on determining these values by finding diagrams
that can be unknotted by two crossing changes or by proving lower bounds
equal to three.

\begin{samepage}
\subsection*{Data and code availability}
The code is available in the following repository:
\begin{center}
    \url{https://github.com/Pinocchio315/unknotting-tait-graphs}
\end{center}
The repository contains the source \KI{} snapshots, the reconstructed
comparison table and release comparisons, obstruction outputs, and data for
verifying the upper bounds. The aggregate counts and table entries are
generated from these deposited sources, including the Greene computations,
and are included directly in the manuscript source. 

The code was written with the assistance of Claude Fable 5.1 (Anthropic).
However, the author takes full responsibility for the code,
the mathematical content, and the results in this paper.
\end{samepage}

\subsection*{Acknowledgements}

S.-J. Lee is extremely grateful to Pavel Putrov for his extensive and insightful feedback, which has been invaluable in shaping this paper.
The author is grateful to Brendan Owens for helpful comments on the correction-term
obstructions and for drawing attention to their implications for the four-dimensional
clasp number and slicing number.
The author also thanks Charles Livingston and Allison H. Moore, the maintainers of KnotInfo, and Daniel Tubbenhauer for their helpful clarifications regarding the known values and bounds of the unknotting number for several knots.

He is also grateful to the Ulsan National Institute of Science and Technology (UNIST), the Shanghai Institute for Mathematics and Interdisciplinary Sciences (SIMIS), and the Beijing Institute of Mathematical Sciences and Applications (BIMSA) for their generous hospitality during various stages of this work. 

This work is supported by the Institute for Basic Science (IBS) under Project No. IBS-R003-D1.

\clearpage
\flushbottom
\appendix

\newsavebox{\appendixLegendBox}

\section{Knots with \texorpdfstring{$u=5$}{u=5}}\label{app:u5}
The following \nUfive{} knots have unknotting number $5$.  The entries are ordered down the
columns.

{\footnotesize
\setlength{\tabcolsep}{4pt}
\setlength{\LTleft}{\fill}
\setlength{\LTright}{\fill}
\sbox{\appendixLegendBox}{%
\begin{minipage}{\textwidth}
\hrule height\heavyrulewidth\vspace{3pt}
\raggedright\setlength{\parindent}{0pt}\setlength{\parskip}{0pt}
\strut For every knot listed, the correction-term obstruction to four crossing changes gives the matching lower bound (Section~\ref{sec:lower}).\strut\par
\strut $^{\mathrm{G}}$ correction-term candidates obtained from Greene's model (Section~\ref{sec:sweep}).\strut\par
\end{minipage}}
\begin{longtable}{*{7}{>{\raggedright\arraybackslash}p{\dimexpr\textwidth/7-2\tabcolsep\relax}}}
\toprule
\endfirsthead
\multicolumn{7}{@{}l@{}}{\emph{Knots with $u=5$, continued from the previous page.}} \\[2pt]
\toprule
\endhead
\bottomrule
\multicolumn{7}{@{}r@{}}{\emph{continued on the next page}} \\
\endfoot
\endlastfoot
13a3685 & 13a4085 & 13a4550 & 13a4824 & 13a4849 & 13a4875 & 13n4970$^{\mathrm{G}}$ \\
13a3712 & 13a4212 & 13a4562 & 13a4826 & 13a4858 & 13n4361$^{\mathrm{G}}$ & 13n4973$^{\mathrm{G}}$ \\*
13a3714 & 13a4368 & 13a4740 & 13a4830 & 13a4863 & 13n4789$^{\mathrm{G}}$ & 13n4991$^{\mathrm{G}}$ \\*
13a4068 & 13a4381 & 13a4763 & 13a4832 & 13a4864 & 13n4876$^{\mathrm{G}}$ & 13n5102$^{\mathrm{G}}$ \\*
\multicolumn{7}{@{}l@{}}{\usebox{\appendixLegendBox}} \\
\end{longtable}}

\section{Knots with \texorpdfstring{$u=4$}{u=4}}\label{app:u4}
The following \nUfour{} knots have unknotting number $4$.  The entries are ordered down the
columns.

{\footnotesize
\setlength{\tabcolsep}{4pt}
\setlength{\LTleft}{\fill}
\setlength{\LTright}{\fill}
\sbox{\appendixLegendBox}{%
\begin{minipage}{\textwidth}
\hrule height\heavyrulewidth\vspace{3pt}
\raggedright\setlength{\parindent}{0pt}\setlength{\parskip}{0pt}
\strut For every knot listed, the correction-term obstruction to three crossing changes gives the matching lower bound (Section~\ref{sec:lower}).\strut\par
\strut $^{\mathrm{G}}$ correction-term candidates obtained from Greene's model (Section~\ref{sec:sweep}).\strut\par
\end{minipage}}
\begin{longtable}{*{7}{>{\raggedright\arraybackslash}p{\dimexpr\textwidth/7-2\tabcolsep\relax}}}
\toprule
\endfirsthead
\multicolumn{7}{@{}l@{}}{\emph{Knots with $u=4$, continued from the previous page.}} \\[2pt]
\toprule
\endhead
\bottomrule
\multicolumn{7}{@{}r@{}}{\emph{continued on the next page}} \\
\endfoot
\endlastfoot
11a291 & 12a973 & 13a1306 & 13a2977 & 13a4215 & 13a4788 & 13n3671$^{\mathrm{G}}$ \\
11a298 & 12a995 & 13a1353 & 13a3060 & 13a4217 & 13a4789 & 13n3707$^{\mathrm{G}}$ \\
11a319 & 12a1004 & 13a1354 & 13a3095 & 13a4303 & 13a4800 & 13n3711$^{\mathrm{G}}$ \\
11a336 & 12a1035 & 13a1356 & 13a3098 & 13a4332 & 13a4802 & 13n4256$^{\mathrm{G}}$ \\
11a340 & 12a1112 & 13a1467 & 13a3103 & 13a4366 & 13a4805 & 13n4313$^{\mathrm{G}}$ \\
11a353 & 12n100$^{\mathrm{G}}$ & 13a1471 & 13a3117 & 13a4369 & 13a4825 & 13n4362$^{\mathrm{G}}$ \\
11a356 & 12n169$^{\mathrm{G}}$ & 13a1893 & 13a3120 & 13a4380 & 13a4827 & 13n4363$^{\mathrm{G}}$ \\
11a357 & 12n245$^{\mathrm{G}}$ & 13a1911 & 13a3130 & 13a4382 & 13a4831 & 13n4790$^{\mathrm{G}}$ \\
11n169$^{\mathrm{G}}$ & 12n259$^{\mathrm{G}}$ & 13a1941 & 13a3594 & 13a4383 & 13a4833 & 13n4794$^{\mathrm{G}}$ \\
11n180$^{\mathrm{G}}$ & 12n406$^{\mathrm{G}}$ & 13a2076 & 13a3686 & 13a4388 & 13a4850 & 13n4825$^{\mathrm{G}}$ \\
12a94 & 12n453$^{\mathrm{G}}$ & 13a2128 & 13a3687 & 13a4395 & 13a4852 & 13n4877$^{\mathrm{G}}$ \\
12a102 & 12n477$^{\mathrm{G}}$ & 13a2149 & 13a3713 & 13a4453 & 13a4854 & 13n4883$^{\mathrm{G}}$ \\
12a107 & 12n758$^{\mathrm{G}}$ & 13a2289 & 13a3715 & 13a4459 & 13a4867 & 13n4888$^{\mathrm{G}}$ \\
12a144 & 13a248 & 13a2333 & 13a3962 & 13a4464 & 13a4868 & 13n4900$^{\mathrm{G}}$ \\
12a145 & 13a357 & 13a2342 & 13a3971 & 13a4492 & 13n1921$^{\mathrm{G}}$ & 13n4971$^{\mathrm{G}}$ \\
12a319 & 13a507 & 13a2620 & 13a4054 & 13a4551 & 13n2318$^{\mathrm{G}}$ & 13n4972$^{\mathrm{G}}$ \\
12a368 & 13a517 & 13a2725 & 13a4069 & 13a4552 & 13n2709$^{\mathrm{G}}$ & 13n4992$^{\mathrm{G}}$ \\
12a391 & 13a526 & 13a2759 & 13a4086 & 13a4553 & 13n2903$^{\mathrm{G}}$ & 13n4994$^{\mathrm{G}}$ \\
12a431 & 13a543 & 13a2760 & 13a4087 & 13a4563 & 13n2962$^{\mathrm{G}}$ & 13n5037$^{\mathrm{G}}$ \\
12a586 & 13a647 & 13a2773 & 13a4095 & 13a4564 & 13n3097$^{\mathrm{G}}$ & 13n5063$^{\mathrm{G}}$ \\
12a659 & 13a648 & 13a2857 & 13a4099 & 13a4567 & 13n3210$^{\mathrm{G}}$ & 13n5094$^{\mathrm{G}}$ \\
12a814 & 13a660 & 13a2867 & 13a4105 & 13a4569 & 13n3335$^{\mathrm{G}}$ & 13n5103$^{\mathrm{G}}$ \\
12a828 & 13a665 & 13a2871 & 13a4190 & 13a4764 & 13n3495$^{\mathrm{G}}$ &  \\*
12a877 & 13a1061 & 13a2955 & 13a4195 & 13a4772 & 13n3500$^{\mathrm{G}}$ &  \\*
12a900 & 13a1270 & 13a2957 & 13a4213 & 13a4774 & 13n3552$^{\mathrm{G}}$ &  \\*
\multicolumn{7}{@{}l@{}}{\usebox{\appendixLegendBox}} \\
\end{longtable}}

\section{Knots with \texorpdfstring{$u=3$}{u=3}}\label{app:u3}
The following \nUthree{} knots have unknotting number $3$.  The entries are ordered down the
columns, and the superscripts are explained at the foot of the table.

{\footnotesize
\setlength{\tabcolsep}{4pt}
\setlength{\LTleft}{\fill}
\setlength{\LTright}{\fill}
\sbox{\appendixLegendBox}{%
\begin{minipage}{\textwidth}
\hrule height\heavyrulewidth\vspace{3pt}
\raggedright\setlength{\parindent}{0pt}\setlength{\parskip}{0pt}
\strut Unmarked: the correction-term obstruction to $u=2$ with $|\sigma|=4$ (Section~\ref{sec:lower}).\strut\par
\strut $^{\mathrm{a}}$ arithmetic case of that obstruction.\strut\par
\strut $^{\mathrm{G}}$ Greene's model and the correction-term obstruction (Section~\ref{sec:sweep}).\strut\par
\strut $^{\mathrm{t}}$ combined Owens--Traczyk criterion.\strut\par
\strut $^{\mathrm{g}}$ generator bound of $H_1(\dbc(K))$.\strut\par
\strut $^{\mathrm{c}}$ generator bound of a higher cyclic cover.\strut\par
\end{minipage}}
\begin{longtable}{*{7}{>{\raggedright\arraybackslash}p{\dimexpr\textwidth/7-2\tabcolsep\relax}}}
\toprule
\endfirsthead
\multicolumn{7}{@{}l@{}}{\emph{Knots with $u=3$, continued from the previous page.}} \\[2pt]
\toprule
\endhead
\bottomrule
\multicolumn{7}{@{}r@{}}{\emph{continued on the next page}} \\
\endfoot
\endlastfoot
11a63 & 13a150 & 13a1491 & 13a2784 & 13a4326 & 13n2061$^{\mathrm{G}}$ & 13n3913$^{\mathrm{G}}$ \\
11a64 & 13a154 & 13a1495 & 13a2785 & 13a4385 & 13n2089$^{\mathrm{G}}$ & 13n3945$^{\mathrm{G}}$ \\
11a144 & 13a170 & 13a1499 & 13a2805 & 13a4390 & 13n2092$^{\mathrm{G}}$ & 13n3994$^{\mathrm{G}}$ \\
11a299 & 13a172 & 13a1546 & 13a2809 & 13a4392 & 13n2252$^{\mathrm{G}}$ & 13n4022$^{\mathrm{G}}$ \\
11a320 & 13a209 & 13a1547 & 13a2821 & 13a4396 & 13n2287$^{\mathrm{G}}$ & 13n4088$^{\mathrm{G}}$ \\
11a329 & 13a217 & 13a1552 & 13a2832 & 13a4461 & 13n2309$^{\mathrm{G}}$ & 13n4108$^{\mathrm{G}}$ \\
11n171$^{\mathrm{G}}$ & 13a231 & 13a1554 & 13a2834 & 13a4465 & 13n2335$^{\mathrm{G}}$ & 13n4133$^{\mathrm{G}}$ \\
11n181$^{\mathrm{G}}$ & 13a251 & 13a1557 & 13a2843 & 13a4508 & 13n2345$^{\mathrm{G}}$ & 13n4220$^{\mathrm{G}}$ \\
12a35 & 13a419 & 13a1580 & 13a2859 & 13a4510 & 13n2346$^{\mathrm{G}}$ & 13n4266$^{\mathrm{G}}$ \\
12a37 & 13a422 & 13a1615 & 13a2865 & 13a4556 & 13n2362$^{\mathrm{G}}$ & 13n4314$^{\mathrm{G}}$ \\
12a47 & 13a434 & 13a1616 & 13a2888 & 13a4565 & 13n2407$^{\mathrm{c}}$ & 13n4352$^{\mathrm{G}}$ \\
12a75 & 13a438 & 13a1617 & 13a2890$^{\mathrm{a}}$ & 13a4571 & 13n2408$^{\mathrm{c}}$ & 13n4365$^{\mathrm{G}}$ \\
12a97 & 13a443 & 13a1618 & 13a2911 & 13a4579 & 13n2410$^{\mathrm{c}}$ & 13n4469$^{\mathrm{G}}$ \\
12a152 & 13a568 & 13a1638$^{\mathrm{c}}$ & 13a2965 & 13a4580 & 13n2411$^{\mathrm{c}}$ & 13n4503$^{\mathrm{G}}$ \\
12a231 & 13a571 & 13a1643 & 13a3065 & 13a4624 & 13n2412$^{\mathrm{c}}$ & 13n4563$^{\mathrm{G}}$ \\
12a254 & 13a576 & 13a1721 & 13a3072$^{\mathrm{c}}$ & 13a4655 & 13n2413$^{\mathrm{c}}$ & 13n4565$^{\mathrm{G}}$ \\
12a269 & 13a584 & 13a1786$^{\mathrm{g}}$ & 13a3089 & 13a4756 & 13n2414$^{\mathrm{c}}$ & 13n4568$^{\mathrm{G}}$ \\
12a289 & 13a586 & 13a1826 & 13a3100 & 13a4768 & 13n2423$^{\mathrm{G}}$ & 13n4615$^{\mathrm{G}}$ \\
12a320 & 13a589 & 13a1910 & 13a3121 & 13a4796 & 13n2474$^{\mathrm{G}}$ & 13n4631$^{\mathrm{G}}$ \\
12a331 & 13a644 & 13a1943 & 13a3310 & 13a4810 & 13n2481$^{\mathrm{G}}$ & 13n4679$^{\mathrm{G}}$ \\
12a421 & 13a645 & 13a1974 & 13a3498 & 13a4820 & 13n2518$^{\mathrm{G}}$ & 13n4700$^{\mathrm{G}}$ \\
12a443 & 13a653 & 13a1984 & 13a3531 & 13a4835 & 13n2644$^{\mathrm{G}}$ & 13n4712$^{\mathrm{G}}$ \\
12a610 & 13a661 & 13a2008$^{\mathrm{t}}$ & 13a3601 & 13a4836 & 13n2697$^{\mathrm{G}}$ & 13n4747$^{\mathrm{G}}$ \\
12a634$^{\mathrm{t}}$ & 13a666 & 13a2025 & 13a3607$^{\mathrm{t}}$ & 13n54$^{\mathrm{G}}$ & 13n2785$^{\mathrm{G}}$ & 13n4752$^{\mathrm{G}}$ \\
12a653 & 13a670 & 13a2037 & 13a3691 & 13n111$^{\mathrm{G}}$ & 13n2898$^{\mathrm{G}}$ & 13n4761$^{\mathrm{G}}$ \\
12a661 & 13a796 & 13a2125 & 13a3698 & 13n131$^{\mathrm{G}}$ & 13n2900$^{\mathrm{G}}$ & 13n4774$^{\mathrm{G}}$ \\
12a763 & 13a859 & 13a2185 & 13a3704 & 13n135$^{\mathrm{G}}$ & 13n3010$^{\mathrm{G}}$ & 13n4782$^{\mathrm{G}}$ \\
12a764 & 13a896 & 13a2269 & 13a3705 & 13n142$^{\mathrm{G}}$ & 13n3015$^{\mathrm{G}}$ & 13n4820$^{\mathrm{G}}$ \\
12a795 & 13a901 & 13a2290 & 13a3716 & 13n150$^{\mathrm{G}}$ & 13n3021$^{\mathrm{G}}$ & 13n4826$^{\mathrm{G}}$ \\
12a796 & 13a933 & 13a2291 & 13a3784 & 13n154$^{\mathrm{G}}$ & 13n3029$^{\mathrm{G}}$ & 13n4837$^{\mathrm{G}}$ \\
12a798 & 13a983 & 13a2295 & 13a3814 & 13n156$^{\mathrm{G}}$ & 13n3030$^{\mathrm{G}}$ & 13n4845$^{\mathrm{G}}$ \\
12a800 & 13a1043 & 13a2310 & 13a3902 & 13n183$^{\mathrm{G}}$ & 13n3038$^{\mathrm{G}}$ & 13n4867$^{\mathrm{G}}$ \\
12a812 & 13a1051 & 13a2317 & 13a3963 & 13n187$^{\mathrm{G}}$ & 13n3042$^{\mathrm{G}}$ & 13n4868$^{\mathrm{G}}$ \\
12a880 & 13a1062 & 13a2334 & 13a3975 & 13n190$^{\mathrm{G}}$ & 13n3048$^{\mathrm{G}}$ & 13n4879$^{\mathrm{G}}$ \\
12a938 & 13a1137 & 13a2359 & 13a3981 & 13n196$^{\mathrm{G}}$ & 13n3145$^{\mathrm{G}}$ & 13n4884$^{\mathrm{G}}$ \\
12a967 & 13a1140 & 13a2360 & 13a4057 & 13n200$^{\mathrm{G}}$ & 13n3191$^{\mathrm{G}}$ & 13n4890$^{\mathrm{G}}$ \\
12a974 & 13a1147 & 13a2365 & 13a4080 & 13n204$^{\mathrm{G}}$ & 13n3199$^{\mathrm{G}}$ & 13n4906$^{\mathrm{G}}$ \\
12a978 & 13a1182 & 13a2373 & 13a4091 & 13n280$^{\mathrm{G}}$ & 13n3200$^{\mathrm{G}}$ & 13n4921$^{\mathrm{G}}$ \\
12a983 & 13a1232$^{\mathrm{c}}$ & 13a2379 & 13a4093 & 13n368$^{\mathrm{G}}$ & 13n3203$^{\mathrm{G}}$ & 13n4931$^{\mathrm{G}}$ \\
12a996 & 13a1238$^{\mathrm{c}}$ & 13a2399 & 13a4101 & 13n495$^{\mathrm{G}}$ & 13n3260$^{\mathrm{G}}$ & 13n4941$^{\mathrm{G}}$ \\
12n441$^{\mathrm{G}}$ & 13a1273 & 13a2412 & 13a4104 & 13n744$^{\mathrm{G}}$ & 13n3274$^{\mathrm{G}}$ & 13n4952$^{\mathrm{G}}$ \\
12n698$^{\mathrm{G}}$ & 13a1295 & 13a2434 & 13a4106 & 13n841$^{\mathrm{G}}$ & 13n3276$^{\mathrm{G}}$ & 13n4975$^{\mathrm{G}}$ \\
12n700$^{\mathrm{G}}$ & 13a1317 & 13a2478 & 13a4113 & 13n900$^{\mathrm{G}}$ & 13n3294$^{\mathrm{G}}$ & 13n4985$^{\mathrm{G}}$ \\
12n734$^{\mathrm{G}}$ & 13a1329 & 13a2502 & 13a4156 & 13n947$^{\mathrm{G}}$ & 13n3426$^{\mathrm{G}}$ & 13n4993$^{\mathrm{G}}$ \\
12n796$^{\mathrm{G}}$ & 13a1331 & 13a2510 & 13a4185 & 13n974$^{\mathrm{G}}$ & 13n3454$^{\mathrm{G}}$ & 13n4996$^{\mathrm{G}}$ \\
12n863$^{\mathrm{G}}$ & 13a1339 & 13a2520 & 13a4191 & 13n1390$^{\mathrm{G}}$ & 13n3504$^{\mathrm{G}}$ & 13n5007$^{\mathrm{G}}$ \\
12n867$^{\mathrm{G}}$ & 13a1398 & 13a2697 & 13a4198 & 13n1514$^{\mathrm{G}}$ & 13n3515$^{\mathrm{G}}$ & 13n5051$^{\mathrm{G}}$ \\
13a15 & 13a1403 & 13a2698 & 13a4200 & 13n1519$^{\mathrm{G}}$ & 13n3533$^{\mathrm{G}}$ & 13n5054$^{\mathrm{G}}$ \\
13a16 & 13a1413 & 13a2720$^{\mathrm{g}}$ & 13a4207 & 13n1592$^{\mathrm{G}}$ & 13n3593$^{\mathrm{G}}$ & 13n5059$^{\mathrm{G}}$ \\
13a48 & 13a1440 & 13a2727$^{\mathrm{g}}$ & 13a4216 & 13n1594$^{\mathrm{G}}$ & 13n3629$^{\mathrm{G}}$ &  \\
13a55 & 13a1465 & 13a2745$^{\mathrm{t}}$ & 13a4232 & 13n1712$^{\mathrm{G}}$ & 13n3666$^{\mathrm{G}}$ &  \\
13a60 & 13a1469 & 13a2761 & 13a4293 & 13n1811$^{\mathrm{G}}$ & 13n3786$^{\mathrm{G}}$ &  \\*
13a65 & 13a1472 & 13a2766 & 13a4320 & 13n1986$^{\mathrm{G}}$ & 13n3879$^{\mathrm{G}}$ &  \\*
13a82 & 13a1475 & 13a2777 & 13a4322 & 13n2001$^{\mathrm{G}}$ & 13n3880$^{\mathrm{G}}$ &  \\*
\multicolumn{7}{@{}l@{}}{\usebox{\appendixLegendBox}} \\
\end{longtable}}

\section{Knots with \texorpdfstring{$u=2$}{u=2}}\label{app:u2}
The following \nUtwo{} knots have unknotting number $2$.  Entries are ordered down the
columns, and the superscripts are explained at the foot of the table.

{\footnotesize
\setlength{\tabcolsep}{4pt}
\setlength{\LTleft}{\fill}
\setlength{\LTright}{\fill}
\sbox{\appendixLegendBox}{%
\begin{minipage}{\textwidth}
\hrule height\heavyrulewidth\vspace{3pt}
\raggedright\setlength{\parindent}{0pt}\setlength{\parskip}{0pt}
\strut Unmarked: the obstruction from the linking pairing (Section~\ref{sec:lower}).\strut\par
\strut $^{\mathrm{t}}$ torsion orders.\strut\par
\strut $^{\mathrm{c}}$ generator bound of a higher cyclic cover.\strut\par
\strut $^{\mathrm{m}}$ obstruction for Montesinos knots (Section~\ref{sec:lower}).\strut\par
\strut $^{\mathrm{G}}$ Greene's model and the surgery obstruction (Sections~\ref{sec:lower} and~\ref{sec:bj}).\strut\par
\strut $^{\mathrm{w}}$ explicit crossing-change construction (Section~\ref{sec:upper}).\strut\par
\strut $^{\mathrm{G,w}}$ lower bound from Greene's model and upper bound from our construction.\strut\par
\end{minipage}}
\begin{longtable}{*{7}{>{\raggedright\arraybackslash}p{\dimexpr\textwidth/7-2\tabcolsep\relax}}}
\toprule
\endfirsthead
\multicolumn{7}{@{}l@{}}{\emph{Knots with $u=2$, continued from the previous page.}} \\[2pt]
\toprule
\endhead
\bottomrule
\multicolumn{7}{@{}r@{}}{\emph{continued on the next page}} \\
\endfoot
\endlastfoot
11n3$^{\mathrm{m}}$ & 13n221$^{\mathrm{G,w}}$ & 13n1119$^{\mathrm{G}}$ & 13n2015 & 13n2774 & 13n3513 & 13n4382$^{\mathrm{c}}$ \\
11n17$^{\mathrm{m}}$ & 13n229$^{\mathrm{G}}$ & 13n1120$^{\mathrm{G}}$ & 13n2020 & 13n2775 & 13n3522$^{\mathrm{G}}$ & 13n4384 \\
11n51$^{\mathrm{m}}$ & 13n230 & 13n1129$^{\mathrm{G}}$ & 13n2025$^{\mathrm{G}}$ & 13n2777$^{\mathrm{G}}$ & 13n3526 & 13n4387 \\
11n54$^{\mathrm{m}}$ & 13n236$^{\mathrm{G}}$ & 13n1130$^{\mathrm{G}}$ & 13n2027 & 13n2778 & 13n3529 & 13n4388$^{\mathrm{c}}$ \\
11n60$^{\mathrm{m}}$ & 13n238$^{\mathrm{G}}$ & 13n1132$^{\mathrm{c}}$ & 13n2028 & 13n2780 & 13n3534$^{\mathrm{G}}$ & 13n4395$^{\mathrm{c}}$ \\
11n94$^{\mathrm{G}}$ & 13n239$^{\mathrm{G}}$ & 13n1135$^{\mathrm{G}}$ & 13n2029 & 13n2781$^{\mathrm{G}}$ & 13n3535 & 13n4402$^{\mathrm{G}}$ \\
11n112$^{\mathrm{G}}$ & 13n243$^{\mathrm{G}}$ & 13n1137$^{\mathrm{G}}$ & 13n2036 & 13n2782 & 13n3537 & 13n4406 \\
11n115$^{\mathrm{G}}$ & 13n244$^{\mathrm{G}}$ & 13n1140$^{\mathrm{G}}$ & 13n2037$^{\mathrm{G}}$ & 13n2787$^{\mathrm{G}}$ & 13n3540$^{\mathrm{G}}$ & 13n4410 \\
11n119$^{\mathrm{G}}$ & 13n248$^{\mathrm{G}}$ & 13n1143$^{\mathrm{G}}$ & 13n2039 & 13n2788$^{\mathrm{G}}$ & 13n3548 & 13n4412$^{\mathrm{G}}$ \\
11n120$^{\mathrm{G}}$ & 13n249$^{\mathrm{G}}$ & 13n1148$^{\mathrm{G}}$ & 13n2041$^{\mathrm{G}}$ & 13n2792 & 13n3549$^{\mathrm{G}}$ & 13n4413$^{\mathrm{G}}$ \\
11n122$^{\mathrm{m}}$ & 13n252$^{\mathrm{G}}$ & 13n1152 & 13n2042 & 13n2793$^{\mathrm{G}}$ & 13n3553 & 13n4419 \\
11n128$^{\mathrm{G}}$ & 13n253$^{\mathrm{G}}$ & 13n1156$^{\mathrm{G}}$ & 13n2044$^{\mathrm{G}}$ & 13n2795 & 13n3557 & 13n4423$^{\mathrm{G}}$ \\
11n129$^{\mathrm{G}}$ & 13n256$^{\mathrm{G}}$ & 13n1157$^{\mathrm{G}}$ & 13n2045$^{\mathrm{G}}$ & 13n2796$^{\mathrm{G}}$ & 13n3559$^{\mathrm{G}}$ & 13n4425 \\
11n138$^{\mathrm{m}}$ & 13n257$^{\mathrm{G}}$ & 13n1158$^{\mathrm{G}}$ & 13n2050$^{\mathrm{G}}$ & 13n2805 & 13n3564$^{\mathrm{G}}$ & 13n4426$^{\mathrm{G}}$ \\
11n142$^{\mathrm{G}}$ & 13n263$^{\mathrm{G}}$ & 13n1161$^{\mathrm{G}}$ & 13n2052 & 13n2806 & 13n3565 & 13n4428 \\
11n160$^{\mathrm{G}}$ & 13n264 & 13n1163$^{\mathrm{G}}$ & 13n2055$^{\mathrm{G}}$ & 13n2807 & 13n3571 & 13n4430 \\
11n161$^{\mathrm{G}}$ & 13n267$^{\mathrm{G}}$ & 13n1166$^{\mathrm{t}}$ & 13n2058 & 13n2809$^{\mathrm{w}}$ & 13n3575 & 13n4431$^{\mathrm{G}}$ \\
11n166$^{\mathrm{G}}$ & 13n275$^{\mathrm{G}}$ & 13n1168$^{\mathrm{G}}$ & 13n2060$^{\mathrm{G}}$ & 13n2812 & 13n3579 & 13n4434$^{\mathrm{G}}$ \\
11n172$^{\mathrm{G}}$ & 13n276$^{\mathrm{G}}$ & 13n1170$^{\mathrm{G}}$ & 13n2068$^{\mathrm{G}}$ & 13n2816 & 13n3586 & 13n4436 \\
11n177$^{\mathrm{G}}$ & 13n278$^{\mathrm{G}}$ & 13n1171$^{\mathrm{G}}$ & 13n2069$^{\mathrm{c}}$ & 13n2820 & 13n3589$^{\mathrm{w}}$ & 13n4439$^{\mathrm{G}}$ \\
11n179$^{\mathrm{G}}$ & 13n279$^{\mathrm{G}}$ & 13n1183$^{\mathrm{G}}$ & 13n2071$^{\mathrm{G}}$ & 13n2821$^{\mathrm{G}}$ & 13n3590$^{\mathrm{G}}$ & 13n4445 \\
11n182$^{\mathrm{G}}$ & 13n281$^{\mathrm{G}}$ & 13n1184$^{\mathrm{G}}$ & 13n2072$^{\mathrm{G}}$ & 13n2823 & 13n3592$^{\mathrm{G}}$ & 13n4447 \\
12n2$^{\mathrm{G}}$ & 13n288 & 13n1185$^{\mathrm{G}}$ & 13n2076 & 13n2825 & 13n3597$^{\mathrm{G}}$ & 13n4449$^{\mathrm{G}}$ \\
12n3$^{\mathrm{G}}$ & 13n289$^{\mathrm{G}}$ & 13n1186 & 13n2078 & 13n2827$^{\mathrm{G}}$ & 13n3598 & 13n4450$^{\mathrm{G}}$ \\
12n9$^{\mathrm{G}}$ & 13n295$^{\mathrm{G}}$ & 13n1188$^{\mathrm{G}}$ & 13n2080$^{\mathrm{G}}$ & 13n2829$^{\mathrm{G}}$ & 13n3599$^{\mathrm{G}}$ & 13n4451$^{\mathrm{G}}$ \\
12n11$^{\mathrm{m}}$ & 13n297$^{\mathrm{G}}$ & 13n1190$^{\mathrm{G}}$ & 13n2081$^{\mathrm{G}}$ & 13n2832$^{\mathrm{c}}$ & 13n3600$^{\mathrm{G}}$ & 13n4452 \\
12n12$^{\mathrm{m}}$ & 13n298$^{\mathrm{G}}$ & 13n1191$^{\mathrm{G}}$ & 13n2082$^{\mathrm{G}}$ & 13n2834$^{\mathrm{G}}$ & 13n3606 & 13n4453$^{\mathrm{G}}$ \\
12n14$^{\mathrm{G}}$ & 13n302$^{\mathrm{G}}$ & 13n1193$^{\mathrm{G}}$ & 13n2083$^{\mathrm{G}}$ & 13n2836$^{\mathrm{G}}$ & 13n3607$^{\mathrm{G}}$ & 13n4454 \\
12n18$^{\mathrm{G}}$ & 13n303$^{\mathrm{G}}$ & 13n1196$^{\mathrm{G}}$ & 13n2085$^{\mathrm{G}}$ & 13n2837$^{\mathrm{G}}$ & 13n3610 & 13n4457$^{\mathrm{G}}$ \\
12n21$^{\mathrm{G}}$ & 13n306$^{\mathrm{G}}$ & 13n1197$^{\mathrm{G}}$ & 13n2086$^{\mathrm{G}}$ & 13n2838$^{\mathrm{G}}$ & 13n3613 & 13n4458$^{\mathrm{G}}$ \\
12n22$^{\mathrm{G}}$ & 13n311$^{\mathrm{G}}$ & 13n1210$^{\mathrm{G}}$ & 13n2088 & 13n2840$^{\mathrm{G}}$ & 13n3614$^{\mathrm{c}}$ & 13n4463 \\
12n26$^{\mathrm{G}}$ & 13n313$^{\mathrm{G}}$ & 13n1212 & 13n2090 & 13n2841$^{\mathrm{G}}$ & 13n3616$^{\mathrm{G}}$ & 13n4465 \\
12n29$^{\mathrm{G}}$ & 13n314$^{\mathrm{G}}$ & 13n1213$^{\mathrm{G}}$ & 13n2091$^{\mathrm{G}}$ & 13n2842$^{\mathrm{G}}$ & 13n3617$^{\mathrm{G}}$ & 13n4467 \\
12n30$^{\mathrm{G}}$ & 13n316 & 13n1214 & 13n2096$^{\mathrm{G}}$ & 13n2843$^{\mathrm{G}}$ & 13n3620$^{\mathrm{G}}$ & 13n4471 \\
12n32$^{\mathrm{G}}$ & 13n319$^{\mathrm{G}}$ & 13n1217$^{\mathrm{G}}$ & 13n2097 & 13n2844$^{\mathrm{G}}$ & 13n3621$^{\mathrm{G}}$ & 13n4472$^{\mathrm{c}}$ \\
12n41$^{\mathrm{m}}$ & 13n320 & 13n1220$^{\mathrm{G}}$ & 13n2098$^{\mathrm{G}}$ & 13n2847$^{\mathrm{c}}$ & 13n3622$^{\mathrm{G}}$ & 13n4474$^{\mathrm{c}}$ \\
12n42$^{\mathrm{m}}$ & 13n323$^{\mathrm{G}}$ & 13n1223 & 13n2100 & 13n2850$^{\mathrm{G}}$ & 13n3623$^{\mathrm{G}}$ & 13n4476$^{\mathrm{G}}$ \\
12n43$^{\mathrm{m}}$ & 13n324$^{\mathrm{G}}$ & 13n1224$^{\mathrm{G}}$ & 13n2107$^{\mathrm{G}}$ & 13n2851 & 13n3631$^{\mathrm{G}}$ & 13n4477$^{\mathrm{G}}$ \\
12n44$^{\mathrm{m}}$ & 13n327 & 13n1230$^{\mathrm{G}}$ & 13n2108$^{\mathrm{G}}$ & 13n2852$^{\mathrm{G}}$ & 13n3636$^{\mathrm{G}}$ & 13n4478$^{\mathrm{G}}$ \\
12n45$^{\mathrm{m}}$ & 13n329 & 13n1232$^{\mathrm{G}}$ & 13n2115$^{\mathrm{G}}$ & 13n2853 & 13n3637$^{\mathrm{G}}$ & 13n4481 \\
12n47$^{\mathrm{m}}$ & 13n331$^{\mathrm{G}}$ & 13n1235$^{\mathrm{G}}$ & 13n2118$^{\mathrm{G}}$ & 13n2855$^{\mathrm{G}}$ & 13n3640$^{\mathrm{G}}$ & 13n4482$^{\mathrm{G}}$ \\
12n48$^{\mathrm{m}}$ & 13n334$^{\mathrm{G}}$ & 13n1244$^{\mathrm{G}}$ & 13n2119$^{\mathrm{G}}$ & 13n2857$^{\mathrm{G}}$ & 13n3641 & 13n4483$^{\mathrm{c}}$ \\
12n49$^{\mathrm{G}}$ & 13n336$^{\mathrm{G}}$ & 13n1247$^{\mathrm{G}}$ & 13n2124 & 13n2860 & 13n3649 & 13n4484 \\
12n50$^{\mathrm{G}}$ & 13n337$^{\mathrm{G}}$ & 13n1250$^{\mathrm{G}}$ & 13n2129$^{\mathrm{G}}$ & 13n2861 & 13n3651 & 13n4488$^{\mathrm{G}}$ \\
12n53$^{\mathrm{G}}$ & 13n340$^{\mathrm{G}}$ & 13n1253$^{\mathrm{G}}$ & 13n2138 & 13n2863$^{\mathrm{G}}$ & 13n3652$^{\mathrm{G}}$ & 13n4491$^{\mathrm{G}}$ \\
12n71$^{\mathrm{G}}$ & 13n341$^{\mathrm{G}}$ & 13n1255$^{\mathrm{G}}$ & 13n2139$^{\mathrm{G}}$ & 13n2864 & 13n3657 & 13n4494 \\
12n80$^{\mathrm{G}}$ & 13n342 & 13n1258 & 13n2143$^{\mathrm{G}}$ & 13n2865 & 13n3667 & 13n4495 \\
12n86$^{\mathrm{G}}$ & 13n343 & 13n1259$^{\mathrm{G}}$ & 13n2145$^{\mathrm{G}}$ & 13n2868 & 13n3669$^{\mathrm{G}}$ & 13n4498$^{\mathrm{G}}$ \\
12n97$^{\mathrm{G}}$ & 13n344$^{\mathrm{G}}$ & 13n1260$^{\mathrm{G}}$ & 13n2148 & 13n2869 & 13n3672$^{\mathrm{G}}$ & 13n4500 \\
12n106$^{\mathrm{G}}$ & 13n346$^{\mathrm{G}}$ & 13n1263 & 13n2155 & 13n2870$^{\mathrm{c}}$ & 13n3678$^{\mathrm{G}}$ & 13n4509$^{\mathrm{G}}$ \\
12n111$^{\mathrm{G}}$ & 13n349 & 13n1265$^{\mathrm{G}}$ & 13n2156 & 13n2871$^{\mathrm{G}}$ & 13n3679$^{\mathrm{G}}$ & 13n4510$^{\mathrm{G}}$ \\
12n122$^{\mathrm{G}}$ & 13n351 & 13n1289$^{\mathrm{G}}$ & 13n2158$^{\mathrm{G}}$ & 13n2875$^{\mathrm{G}}$ & 13n3682$^{\mathrm{G}}$ & 13n4512 \\
12n127$^{\mathrm{G}}$ & 13n353$^{\mathrm{G}}$ & 13n1303$^{\mathrm{G}}$ & 13n2160$^{\mathrm{G}}$ & 13n2878$^{\mathrm{G}}$ & 13n3684 & 13n4513$^{\mathrm{G}}$ \\
12n131$^{\mathrm{G}}$ & 13n355$^{\mathrm{G}}$ & 13n1318$^{\mathrm{G}}$ & 13n2161 & 13n2880$^{\mathrm{G}}$ & 13n3689 & 13n4514$^{\mathrm{G}}$ \\
12n154$^{\mathrm{m}}$ & 13n356 & 13n1326$^{\mathrm{G}}$ & 13n2162 & 13n2883 & 13n3694$^{\mathrm{G}}$ & 13n4517$^{\mathrm{G}}$ \\
12n158$^{\mathrm{G}}$ & 13n357$^{\mathrm{G}}$ & 13n1335 & 13n2164 & 13n2887 & 13n3699 & 13n4518$^{\mathrm{G}}$ \\
12n159$^{\mathrm{m}}$ & 13n359$^{\mathrm{G}}$ & 13n1336$^{\mathrm{G}}$ & 13n2165 & 13n2888$^{\mathrm{G}}$ & 13n3704$^{\mathrm{G}}$ & 13n4521$^{\mathrm{G}}$ \\
12n160$^{\mathrm{m}}$ & 13n371 & 13n1338 & 13n2168 & 13n2889 & 13n3705$^{\mathrm{G}}$ & 13n4523 \\
12n162$^{\mathrm{m}}$ & 13n374 & 13n1339$^{\mathrm{G}}$ & 13n2169 & 13n2890$^{\mathrm{G}}$ & 13n3708 & 13n4525$^{\mathrm{G}}$ \\
12n170$^{\mathrm{m}}$ & 13n377$^{\mathrm{G}}$ & 13n1342 & 13n2170$^{\mathrm{G}}$ & 13n2891$^{\mathrm{G}}$ & 13n3714 & 13n4527 \\
12n176$^{\mathrm{G}}$ & 13n379$^{\mathrm{G}}$ & 13n1343$^{\mathrm{G}}$ & 13n2173 & 13n2892 & 13n3715 & 13n4528$^{\mathrm{G}}$ \\
12n178$^{\mathrm{G}}$ & 13n381$^{\mathrm{G}}$ & 13n1347$^{\mathrm{G}}$ & 13n2175$^{\mathrm{G}}$ & 13n2895 & 13n3716 & 13n4530$^{\mathrm{G}}$ \\
12n180$^{\mathrm{G}}$ & 13n382 & 13n1349 & 13n2176$^{\mathrm{G}}$ & 13n2902 & 13n3717 & 13n4532$^{\mathrm{G}}$ \\
12n181$^{\mathrm{G}}$ & 13n385$^{\mathrm{G}}$ & 13n1353 & 13n2177 & 13n2905$^{\mathrm{G}}$ & 13n3719 & 13n4533$^{\mathrm{G}}$ \\
12n182$^{\mathrm{G}}$ & 13n386$^{\mathrm{G}}$ & 13n1357$^{\mathrm{c}}$ & 13n2184$^{\mathrm{G}}$ & 13n2907$^{\mathrm{w}}$ & 13n3723 & 13n4534 \\
12n189$^{\mathrm{G}}$ & 13n389$^{\mathrm{G}}$ & 13n1359 & 13n2185$^{\mathrm{G}}$ & 13n2908 & 13n3725 & 13n4535$^{\mathrm{G}}$ \\
12n197$^{\mathrm{G}}$ & 13n390$^{\mathrm{G}}$ & 13n1360 & 13n2187 & 13n2910 & 13n3726 & 13n4536$^{\mathrm{G}}$ \\
12n238$^{\mathrm{m}}$ & 13n392$^{\mathrm{G}}$ & 13n1364$^{\mathrm{G}}$ & 13n2189 & 13n2914 & 13n3727 & 13n4537$^{\mathrm{G}}$ \\
12n241$^{\mathrm{m}}$ & 13n394$^{\mathrm{G}}$ & 13n1367$^{\mathrm{G}}$ & 13n2198$^{\mathrm{c}}$ & 13n2915 & 13n3730 & 13n4540 \\
12n246$^{\mathrm{G}}$ & 13n395$^{\mathrm{G}}$ & 13n1368$^{\mathrm{G}}$ & 13n2199 & 13n2919 & 13n3733$^{\mathrm{w}}$ & 13n4542 \\
12n280$^{\mathrm{G}}$ & 13n398$^{\mathrm{G}}$ & 13n1369$^{\mathrm{G}}$ & 13n2207$^{\mathrm{G}}$ & 13n2923$^{\mathrm{c}}$ & 13n3736 & 13n4543$^{\mathrm{G}}$ \\
12n281$^{\mathrm{G}}$ & 13n399$^{\mathrm{G}}$ & 13n1370$^{\mathrm{G}}$ & 13n2208 & 13n2924 & 13n3758$^{\mathrm{G}}$ & 13n4545$^{\mathrm{G}}$ \\
12n284$^{\mathrm{G}}$ & 13n401 & 13n1373$^{\mathrm{G}}$ & 13n2209 & 13n2925 & 13n3788$^{\mathrm{G}}$ & 13n4546 \\
12n285$^{\mathrm{G}}$ & 13n403$^{\mathrm{G}}$ & 13n1374$^{\mathrm{G}}$ & 13n2210 & 13n2926 & 13n3789$^{\mathrm{G}}$ & 13n4547$^{\mathrm{G}}$ \\
12n286$^{\mathrm{m}}$ & 13n404$^{\mathrm{G}}$ & 13n1375$^{\mathrm{G}}$ & 13n2214 & 13n2927 & 13n3794 & 13n4548$^{\mathrm{G}}$ \\
12n287$^{\mathrm{G}}$ & 13n408 & 13n1377 & 13n2215 & 13n2931 & 13n3795$^{\mathrm{G}}$ & 13n4549$^{\mathrm{G}}$ \\
12n288$^{\mathrm{m}}$ & 13n410$^{\mathrm{G}}$ & 13n1378 & 13n2216$^{\mathrm{G}}$ & 13n2932$^{\mathrm{G}}$ & 13n3796 & 13n4550 \\
12n295$^{\mathrm{m}}$ & 13n411$^{\mathrm{G}}$ & 13n1379 & 13n2218$^{\mathrm{G}}$ & 13n2934 & 13n3797 & 13n4551 \\
12n298$^{\mathrm{G}}$ & 13n414 & 13n1388 & 13n2221$^{\mathrm{G}}$ & 13n2936 & 13n3798$^{\mathrm{G}}$ & 13n4553 \\
12n301$^{\mathrm{m}}$ & 13n415$^{\mathrm{G}}$ & 13n1392 & 13n2227$^{\mathrm{G}}$ & 13n2937 & 13n3802 & 13n4554 \\
12n302$^{\mathrm{m}}$ & 13n416$^{\mathrm{G}}$ & 13n1397$^{\mathrm{G}}$ & 13n2228$^{\mathrm{G}}$ & 13n2940 & 13n3803 & 13n4555$^{\mathrm{G}}$ \\
12n312$^{\mathrm{G}}$ & 13n422$^{\mathrm{G}}$ & 13n1398$^{\mathrm{G}}$ & 13n2229$^{\mathrm{G}}$ & 13n2942$^{\mathrm{G}}$ & 13n3804$^{\mathrm{G}}$ & 13n4556 \\
12n315$^{\mathrm{G}}$ & 13n423$^{\mathrm{G}}$ & 13n1400$^{\mathrm{c}}$ & 13n2231 & 13n2945$^{\mathrm{G}}$ & 13n3806 & 13n4557$^{\mathrm{G}}$ \\
12n323$^{\mathrm{G}}$ & 13n424$^{\mathrm{G}}$ & 13n1401$^{\mathrm{G}}$ & 13n2236$^{\mathrm{G}}$ & 13n2949 & 13n3807 & 13n4558 \\
12n324$^{\mathrm{G}}$ & 13n432$^{\mathrm{G}}$ & 13n1402$^{\mathrm{G}}$ & 13n2238$^{\mathrm{G}}$ & 13n2953 & 13n3813 & 13n4564$^{\mathrm{G}}$ \\
12n330$^{\mathrm{G}}$ & 13n434$^{\mathrm{G}}$ & 13n1409 & 13n2240 & 13n2954 & 13n3816 & 13n4566 \\
12n337$^{\mathrm{m}}$ & 13n435$^{\mathrm{G}}$ & 13n1410$^{\mathrm{G}}$ & 13n2241$^{\mathrm{G}}$ & 13n2955 & 13n3822 & 13n4567 \\
12n345$^{\mathrm{G}}$ & 13n436$^{\mathrm{G,w}}$ & 13n1413$^{\mathrm{G}}$ & 13n2243$^{\mathrm{G}}$ & 13n2960$^{\mathrm{G}}$ & 13n3823$^{\mathrm{G}}$ & 13n4570$^{\mathrm{c}}$ \\
12n348$^{\mathrm{G}}$ & 13n439$^{\mathrm{G}}$ & 13n1417 & 13n2246$^{\mathrm{G}}$ & 13n2964$^{\mathrm{G}}$ & 13n3824 & 13n4574$^{\mathrm{c}}$ \\
12n353$^{\mathrm{G}}$ & 13n443$^{\mathrm{G}}$ & 13n1418$^{\mathrm{G}}$ & 13n2247$^{\mathrm{G}}$ & 13n2967$^{\mathrm{c}}$ & 13n3825 & 13n4576 \\
12n359$^{\mathrm{G}}$ & 13n444$^{\mathrm{G}}$ & 13n1424 & 13n2251$^{\mathrm{G,w}}$ & 13n2980$^{\mathrm{G}}$ & 13n3830 & 13n4577$^{\mathrm{G}}$ \\
12n363$^{\mathrm{G}}$ & 13n447$^{\mathrm{G,w}}$ & 13n1428$^{\mathrm{G}}$ & 13n2254$^{\mathrm{G}}$ & 13n2981$^{\mathrm{c}}$ & 13n3834 & 13n4581$^{\mathrm{G}}$ \\
12n367$^{\mathrm{G}}$ & 13n448$^{\mathrm{G}}$ & 13n1430$^{\mathrm{G}}$ & 13n2257$^{\mathrm{G}}$ & 13n2994 & 13n3839 & 13n4583 \\
12n376$^{\mathrm{G}}$ & 13n457$^{\mathrm{G}}$ & 13n1431$^{\mathrm{G}}$ & 13n2261$^{\mathrm{c}}$ & 13n2996$^{\mathrm{G}}$ & 13n3849$^{\mathrm{G}}$ & 13n4585 \\
12n378$^{\mathrm{G}}$ & 13n463$^{\mathrm{G}}$ & 13n1432$^{\mathrm{G}}$ & 13n2262$^{\mathrm{G}}$ & 13n2997 & 13n3852 & 13n4590$^{\mathrm{G}}$ \\
12n383$^{\mathrm{G}}$ & 13n464$^{\mathrm{G}}$ & 13n1436$^{\mathrm{G}}$ & 13n2263$^{\mathrm{G}}$ & 13n3002 & 13n3860$^{\mathrm{G}}$ & 13n4593$^{\mathrm{G}}$ \\
12n385$^{\mathrm{G}}$ & 13n470$^{\mathrm{G}}$ & 13n1438$^{\mathrm{G}}$ & 13n2264$^{\mathrm{G}}$ & 13n3009 & 13n3882$^{\mathrm{G}}$ & 13n4594 \\
12n390$^{\mathrm{G}}$ & 13n483$^{\mathrm{G}}$ & 13n1439$^{\mathrm{G,w}}$ & 13n2265$^{\mathrm{G}}$ & 13n3013 & 13n3883$^{\mathrm{G}}$ & 13n4598$^{\mathrm{G}}$ \\
12n396$^{\mathrm{G}}$ & 13n485$^{\mathrm{G}}$ & 13n1445 & 13n2268$^{\mathrm{G}}$ & 13n3016$^{\mathrm{G}}$ & 13n3889 & 13n4599$^{\mathrm{G}}$ \\
12n399$^{\mathrm{G}}$ & 13n497$^{\mathrm{G}}$ & 13n1446$^{\mathrm{G}}$ & 13n2269$^{\mathrm{G}}$ & 13n3018$^{\mathrm{G}}$ & 13n3900 & 13n4600$^{\mathrm{G}}$ \\
12n408$^{\mathrm{G}}$ & 13n498$^{\mathrm{G}}$ & 13n1447$^{\mathrm{G}}$ & 13n2273$^{\mathrm{G}}$ & 13n3027$^{\mathrm{G}}$ & 13n3905 & 13n4601$^{\mathrm{G}}$ \\
12n415$^{\mathrm{G}}$ & 13n503$^{\mathrm{G}}$ & 13n1449 & 13n2274$^{\mathrm{G}}$ & 13n3031$^{\mathrm{G}}$ & 13n3907 & 13n4602$^{\mathrm{G}}$ \\
12n421$^{\mathrm{G}}$ & 13n505$^{\mathrm{G}}$ & 13n1450 & 13n2275$^{\mathrm{G}}$ & 13n3032 & 13n3908$^{\mathrm{G}}$ & 13n4605 \\
12n422$^{\mathrm{G}}$ & 13n507$^{\mathrm{G}}$ & 13n1451 & 13n2280$^{\mathrm{c}}$ & 13n3033$^{\mathrm{w}}$ & 13n3909 & 13n4610$^{\mathrm{G}}$ \\
12n431$^{\mathrm{G}}$ & 13n511 & 13n1455$^{\mathrm{G}}$ & 13n2285$^{\mathrm{G}}$ & 13n3036 & 13n3918 & 13n4611$^{\mathrm{G}}$ \\
12n444$^{\mathrm{m}}$ & 13n513 & 13n1457 & 13n2286 & 13n3045 & 13n3920 & 13n4616$^{\mathrm{G}}$ \\
12n448$^{\mathrm{G}}$ & 13n515$^{\mathrm{G}}$ & 13n1458 & 13n2290 & 13n3046 & 13n3921 & 13n4621$^{\mathrm{G}}$ \\
12n452$^{\mathrm{G}}$ & 13n518$^{\mathrm{G}}$ & 13n1459 & 13n2291$^{\mathrm{G}}$ & 13n3049$^{\mathrm{c}}$ & 13n3926$^{\mathrm{G}}$ & 13n4627$^{\mathrm{G}}$ \\
12n457$^{\mathrm{m}}$ & 13n519 & 13n1460 & 13n2293 & 13n3050$^{\mathrm{G}}$ & 13n3927$^{\mathrm{G}}$ & 13n4628 \\
12n458$^{\mathrm{G}}$ & 13n520$^{\mathrm{G}}$ & 13n1461 & 13n2301$^{\mathrm{c}}$ & 13n3051 & 13n3928 & 13n4632 \\
12n463$^{\mathrm{G}}$ & 13n522$^{\mathrm{G}}$ & 13n1462 & 13n2311 & 13n3056$^{\mathrm{G}}$ & 13n3929 & 13n4642$^{\mathrm{G}}$ \\
12n465$^{\mathrm{G}}$ & 13n525 & 13n1466$^{\mathrm{G}}$ & 13n2313 & 13n3057$^{\mathrm{G}}$ & 13n3940 & 13n4644$^{\mathrm{G}}$ \\
12n470$^{\mathrm{m}}$ & 13n527$^{\mathrm{G}}$ & 13n1468 & 13n2314 & 13n3062$^{\mathrm{G}}$ & 13n3950 & 13n4647$^{\mathrm{G}}$ \\
12n471$^{\mathrm{m}}$ & 13n529$^{\mathrm{G}}$ & 13n1470$^{\mathrm{G}}$ & 13n2315 & 13n3063$^{\mathrm{G}}$ & 13n3952$^{\mathrm{G}}$ & 13n4649$^{\mathrm{G}}$ \\
12n475$^{\mathrm{m}}$ & 13n530$^{\mathrm{G}}$ & 13n1472$^{\mathrm{G}}$ & 13n2316 & 13n3065$^{\mathrm{G}}$ & 13n3953 & 13n4651$^{\mathrm{G}}$ \\
12n478$^{\mathrm{m}}$ & 13n532$^{\mathrm{G}}$ & 13n1476$^{\mathrm{G}}$ & 13n2319$^{\mathrm{G}}$ & 13n3068 & 13n3955$^{\mathrm{G}}$ & 13n4652$^{\mathrm{G}}$ \\
12n479$^{\mathrm{m}}$ & 13n535 & 13n1480 & 13n2321$^{\mathrm{G}}$ & 13n3073$^{\mathrm{G}}$ & 13n3956$^{\mathrm{G}}$ & 13n4656 \\
12n489$^{\mathrm{G}}$ & 13n541$^{\mathrm{G}}$ & 13n1481$^{\mathrm{c}}$ & 13n2322$^{\mathrm{G}}$ & 13n3074 & 13n3957 & 13n4657 \\
12n490$^{\mathrm{G}}$ & 13n544$^{\mathrm{G}}$ & 13n1482$^{\mathrm{G}}$ & 13n2323 & 13n3075 & 13n3958$^{\mathrm{G}}$ & 13n4659 \\
12n491$^{\mathrm{G}}$ & 13n545 & 13n1483$^{\mathrm{G}}$ & 13n2324 & 13n3078$^{\mathrm{G}}$ & 13n3959$^{\mathrm{G}}$ & 13n4660 \\
12n500$^{\mathrm{m}}$ & 13n550 & 13n1485$^{\mathrm{G}}$ & 13n2326$^{\mathrm{G}}$ & 13n3083 & 13n3966$^{\mathrm{G}}$ & 13n4662$^{\mathrm{G}}$ \\
12n501$^{\mathrm{m}}$ & 13n552$^{\mathrm{G}}$ & 13n1490$^{\mathrm{G}}$ & 13n2328$^{\mathrm{G}}$ & 13n3087$^{\mathrm{G}}$ & 13n3967$^{\mathrm{G}}$ & 13n4663 \\
12n506$^{\mathrm{G}}$ & 13n558$^{\mathrm{c}}$ & 13n1492 & 13n2329$^{\mathrm{G}}$ & 13n3090 & 13n3968 & 13n4664$^{\mathrm{G}}$ \\
12n507$^{\mathrm{G}}$ & 13n559$^{\mathrm{G}}$ & 13n1494 & 13n2330$^{\mathrm{G}}$ & 13n3093$^{\mathrm{G}}$ & 13n3969$^{\mathrm{G}}$ & 13n4665$^{\mathrm{G}}$ \\
12n514$^{\mathrm{G}}$ & 13n562$^{\mathrm{c}}$ & 13n1497$^{\mathrm{G}}$ & 13n2331$^{\mathrm{G}}$ & 13n3095$^{\mathrm{G}}$ & 13n3970$^{\mathrm{G}}$ & 13n4666$^{\mathrm{G}}$ \\
12n517$^{\mathrm{G}}$ & 13n565$^{\mathrm{G}}$ & 13n1500 & 13n2332$^{\mathrm{G}}$ & 13n3096$^{\mathrm{c}}$ & 13n3973$^{\mathrm{G}}$ & 13n4669$^{\mathrm{G}}$ \\
12n520$^{\mathrm{G}}$ & 13n566$^{\mathrm{G}}$ & 13n1502$^{\mathrm{G}}$ & 13n2336$^{\mathrm{G}}$ & 13n3101 & 13n3974$^{\mathrm{G}}$ & 13n4670 \\
12n521$^{\mathrm{G}}$ & 13n568$^{\mathrm{G}}$ & 13n1508$^{\mathrm{c}}$ & 13n2339$^{\mathrm{G}}$ & 13n3102$^{\mathrm{G}}$ & 13n3976$^{\mathrm{G}}$ & 13n4671 \\
12n523$^{\mathrm{m}}$ & 13n569 & 13n1512 & 13n2342$^{\mathrm{G}}$ & 13n3103$^{\mathrm{G}}$ & 13n3977 & 13n4673 \\
12n525$^{\mathrm{G}}$ & 13n576$^{\mathrm{G}}$ & 13n1513 & 13n2347 & 13n3105 & 13n3985$^{\mathrm{G}}$ & 13n4675$^{\mathrm{G}}$ \\
12n529$^{\mathrm{G}}$ & 13n579 & 13n1515$^{\mathrm{G}}$ & 13n2359$^{\mathrm{G}}$ & 13n3107$^{\mathrm{G}}$ & 13n3988$^{\mathrm{G}}$ & 13n4676 \\
12n530$^{\mathrm{G}}$ & 13n580$^{\mathrm{G}}$ & 13n1518$^{\mathrm{G}}$ & 13n2360$^{\mathrm{G}}$ & 13n3108$^{\mathrm{G,w}}$ & 13n3996 & 13n4678$^{\mathrm{G}}$ \\
12n532$^{\mathrm{G}}$ & 13n581$^{\mathrm{G}}$ & 13n1523$^{\mathrm{G}}$ & 13n2361$^{\mathrm{G}}$ & 13n3110$^{\mathrm{G}}$ & 13n3999$^{\mathrm{G}}$ & 13n4682$^{\mathrm{G}}$ \\
12n537$^{\mathrm{G}}$ & 13n582$^{\mathrm{G}}$ & 13n1525 & 13n2363$^{\mathrm{G}}$ & 13n3113 & 13n4001 & 13n4685 \\
12n539$^{\mathrm{G}}$ & 13n590$^{\mathrm{G}}$ & 13n1527$^{\mathrm{G}}$ & 13n2366$^{\mathrm{G}}$ & 13n3116$^{\mathrm{G}}$ & 13n4003$^{\mathrm{G}}$ & 13n4688 \\
12n541$^{\mathrm{G}}$ & 13n591$^{\mathrm{G}}$ & 13n1531$^{\mathrm{G}}$ & 13n2367$^{\mathrm{G}}$ & 13n3117$^{\mathrm{G}}$ & 13n4006$^{\mathrm{G}}$ & 13n4689 \\
12n542$^{\mathrm{G}}$ & 13n595$^{\mathrm{G}}$ & 13n1536 & 13n2370 & 13n3120$^{\mathrm{G}}$ & 13n4008 & 13n4692$^{\mathrm{G}}$ \\
12n544$^{\mathrm{G}}$ & 13n598$^{\mathrm{G}}$ & 13n1538$^{\mathrm{G}}$ & 13n2372 & 13n3123$^{\mathrm{G}}$ & 13n4012 & 13n4693 \\
12n548$^{\mathrm{G}}$ & 13n599$^{\mathrm{G}}$ & 13n1540$^{\mathrm{G}}$ & 13n2373$^{\mathrm{G}}$ & 13n3125$^{\mathrm{G}}$ & 13n4014$^{\mathrm{G}}$ & 13n4697$^{\mathrm{G}}$ \\
12n550$^{\mathrm{m}}$ & 13n600$^{\mathrm{G}}$ & 13n1542$^{\mathrm{G}}$ & 13n2377$^{\mathrm{G}}$ & 13n3126$^{\mathrm{G}}$ & 13n4015$^{\mathrm{G}}$ & 13n4699$^{\mathrm{G}}$ \\
12n563$^{\mathrm{G}}$ & 13n609 & 13n1546 & 13n2379$^{\mathrm{w}}$ & 13n3130$^{\mathrm{G}}$ & 13n4017$^{\mathrm{G}}$ & 13n4701$^{\mathrm{G}}$ \\
12n564$^{\mathrm{m}}$ & 13n610 & 13n1547$^{\mathrm{G}}$ & 13n2380$^{\mathrm{G}}$ & 13n3131$^{\mathrm{G}}$ & 13n4019$^{\mathrm{G}}$ & 13n4702$^{\mathrm{G}}$ \\
12n568$^{\mathrm{G}}$ & 13n613 & 13n1548$^{\mathrm{G}}$ & 13n2384 & 13n3134$^{\mathrm{G}}$ & 13n4020 & 13n4703$^{\mathrm{G}}$ \\
12n577$^{\mathrm{m}}$ & 13n615$^{\mathrm{G}}$ & 13n1549$^{\mathrm{G}}$ & 13n2386$^{\mathrm{G}}$ & 13n3135 & 13n4023$^{\mathrm{G}}$ & 13n4704 \\
12n578$^{\mathrm{m}}$ & 13n620 & 13n1553$^{\mathrm{G}}$ & 13n2387$^{\mathrm{G}}$ & 13n3142$^{\mathrm{G}}$ & 13n4025$^{\mathrm{G,w}}$ & 13n4706$^{\mathrm{G}}$ \\
12n586$^{\mathrm{G}}$ & 13n625$^{\mathrm{G}}$ & 13n1554$^{\mathrm{G}}$ & 13n2388$^{\mathrm{G}}$ & 13n3146$^{\mathrm{G}}$ & 13n4026 & 13n4709 \\
12n587$^{\mathrm{G}}$ & 13n626$^{\mathrm{G}}$ & 13n1556$^{\mathrm{G}}$ & 13n2389$^{\mathrm{G}}$ & 13n3147$^{\mathrm{G}}$ & 13n4034 & 13n4710 \\
12n595$^{\mathrm{G}}$ & 13n630$^{\mathrm{G}}$ & 13n1557 & 13n2396$^{\mathrm{G}}$ & 13n3151$^{\mathrm{G}}$ & 13n4035 & 13n4713 \\
12n597$^{\mathrm{G}}$ & 13n632$^{\mathrm{G}}$ & 13n1561$^{\mathrm{G}}$ & 13n2398 & 13n3152$^{\mathrm{G}}$ & 13n4037$^{\mathrm{G}}$ & 13n4719$^{\mathrm{G}}$ \\
12n607$^{\mathrm{G}}$ & 13n633$^{\mathrm{G}}$ & 13n1562 & 13n2400 & 13n3153$^{\mathrm{G}}$ & 13n4040$^{\mathrm{G}}$ & 13n4720 \\
12n608$^{\mathrm{G}}$ & 13n634$^{\mathrm{G}}$ & 13n1563$^{\mathrm{G}}$ & 13n2401 & 13n3155$^{\mathrm{G}}$ & 13n4041 & 13n4724 \\
12n618$^{\mathrm{G}}$ & 13n635$^{\mathrm{G}}$ & 13n1564$^{\mathrm{G}}$ & 13n2416 & 13n3156 & 13n4042$^{\mathrm{G}}$ & 13n4726$^{\mathrm{c}}$ \\
12n625$^{\mathrm{G}}$ & 13n636$^{\mathrm{G,w}}$ & 13n1565$^{\mathrm{G}}$ & 13n2421 & 13n3157$^{\mathrm{G}}$ & 13n4043$^{\mathrm{G}}$ & 13n4727 \\
12n627$^{\mathrm{G}}$ & 13n638$^{\mathrm{G}}$ & 13n1567 & 13n2424$^{\mathrm{G}}$ & 13n3158$^{\mathrm{c}}$ & 13n4046$^{\mathrm{c}}$ & 13n4729 \\
12n632$^{\mathrm{G}}$ & 13n640$^{\mathrm{G}}$ & 13n1568$^{\mathrm{G}}$ & 13n2426 & 13n3159$^{\mathrm{G}}$ & 13n4048$^{\mathrm{c}}$ & 13n4730 \\
12n635$^{\mathrm{G}}$ & 13n644$^{\mathrm{G}}$ & 13n1570$^{\mathrm{G}}$ & 13n2427 & 13n3165 & 13n4049 & 13n4732 \\
12n657$^{\mathrm{G}}$ & 13n648$^{\mathrm{G}}$ & 13n1572$^{\mathrm{G}}$ & 13n2429$^{\mathrm{G}}$ & 13n3170$^{\mathrm{G}}$ & 13n4054$^{\mathrm{G}}$ & 13n4733 \\
12n662$^{\mathrm{G}}$ & 13n650$^{\mathrm{G}}$ & 13n1574 & 13n2430$^{\mathrm{G}}$ & 13n3171 & 13n4056 & 13n4736 \\
12n663$^{\mathrm{G}}$ & 13n653 & 13n1575 & 13n2431$^{\mathrm{G}}$ & 13n3175 & 13n4057$^{\mathrm{G}}$ & 13n4739$^{\mathrm{G}}$ \\
12n664$^{\mathrm{G}}$ & 13n655$^{\mathrm{G}}$ & 13n1578$^{\mathrm{G}}$ & 13n2432 & 13n3176 & 13n4061$^{\mathrm{G}}$ & 13n4743 \\
12n678$^{\mathrm{G}}$ & 13n657$^{\mathrm{G}}$ & 13n1583 & 13n2435$^{\mathrm{G}}$ & 13n3180$^{\mathrm{G}}$ & 13n4062$^{\mathrm{G}}$ & 13n4746$^{\mathrm{c}}$ \\
12n695$^{\mathrm{G}}$ & 13n658$^{\mathrm{G}}$ & 13n1587$^{\mathrm{G}}$ & 13n2436$^{\mathrm{G}}$ & 13n3184 & 13n4064 & 13n4750$^{\mathrm{G}}$ \\
12n702$^{\mathrm{G}}$ & 13n659$^{\mathrm{G}}$ & 13n1596$^{\mathrm{G}}$ & 13n2440$^{\mathrm{G}}$ & 13n3185$^{\mathrm{G}}$ & 13n4068$^{\mathrm{G}}$ & 13n4753$^{\mathrm{G}}$ \\
12n708$^{\mathrm{G}}$ & 13n660$^{\mathrm{G}}$ & 13n1601 & 13n2441$^{\mathrm{G}}$ & 13n3186$^{\mathrm{G}}$ & 13n4069$^{\mathrm{G}}$ & 13n4754$^{\mathrm{G}}$ \\
12n711$^{\mathrm{G}}$ & 13n662$^{\mathrm{G}}$ & 13n1603 & 13n2444$^{\mathrm{G}}$ & 13n3188 & 13n4073 & 13n4756$^{\mathrm{G}}$ \\
12n716$^{\mathrm{G}}$ & 13n663$^{\mathrm{G}}$ & 13n1604 & 13n2446$^{\mathrm{G}}$ & 13n3189$^{\mathrm{G}}$ & 13n4075$^{\mathrm{G}}$ & 13n4764$^{\mathrm{G}}$ \\
12n723$^{\mathrm{m}}$ & 13n667$^{\mathrm{G}}$ & 13n1608 & 13n2448$^{\mathrm{G}}$ & 13n3190$^{\mathrm{c}}$ & 13n4076$^{\mathrm{G}}$ & 13n4767 \\
12n728$^{\mathrm{G}}$ & 13n671$^{\mathrm{G}}$ & 13n1613 & 13n2451 & 13n3193$^{\mathrm{G}}$ & 13n4077$^{\mathrm{c}}$ & 13n4768 \\
12n730$^{\mathrm{G}}$ & 13n672$^{\mathrm{G}}$ & 13n1614 & 13n2455 & 13n3194$^{\mathrm{G}}$ & 13n4078$^{\mathrm{G}}$ & 13n4769$^{\mathrm{G}}$ \\
12n731$^{\mathrm{G}}$ & 13n673$^{\mathrm{G}}$ & 13n1615$^{\mathrm{G}}$ & 13n2457$^{\mathrm{G}}$ & 13n3195$^{\mathrm{G}}$ & 13n4080 & 13n4770 \\
12n733$^{\mathrm{G}}$ & 13n674$^{\mathrm{G}}$ & 13n1619$^{\mathrm{G}}$ & 13n2459$^{\mathrm{G}}$ & 13n3197 & 13n4081$^{\mathrm{G}}$ & 13n4772$^{\mathrm{G}}$ \\
12n735$^{\mathrm{G}}$ & 13n679 & 13n1621 & 13n2463 & 13n3198 & 13n4083$^{\mathrm{G}}$ & 13n4775$^{\mathrm{G}}$ \\
12n741$^{\mathrm{G}}$ & 13n681$^{\mathrm{G}}$ & 13n1623$^{\mathrm{c}}$ & 13n2464$^{\mathrm{G}}$ & 13n3202$^{\mathrm{G}}$ & 13n4086 & 13n4777$^{\mathrm{G}}$ \\
12n751$^{\mathrm{G}}$ & 13n685$^{\mathrm{G}}$ & 13n1624 & 13n2466$^{\mathrm{c}}$ & 13n3207$^{\mathrm{G}}$ & 13n4089 & 13n4778$^{\mathrm{G}}$ \\
12n768$^{\mathrm{G}}$ & 13n687 & 13n1626$^{\mathrm{G}}$ & 13n2467 & 13n3208 & 13n4090 & 13n4781$^{\mathrm{G}}$ \\
12n772$^{\mathrm{G}}$ & 13n692$^{\mathrm{G}}$ & 13n1627$^{\mathrm{G}}$ & 13n2469 & 13n3209$^{\mathrm{G}}$ & 13n4092 & 13n4784$^{\mathrm{G}}$ \\
12n774$^{\mathrm{G}}$ & 13n693$^{\mathrm{G}}$ & 13n1628 & 13n2473$^{\mathrm{G}}$ & 13n3211$^{\mathrm{G}}$ & 13n4093$^{\mathrm{G}}$ & 13n4786$^{\mathrm{G}}$ \\
12n775$^{\mathrm{G}}$ & 13n695$^{\mathrm{G}}$ & 13n1630$^{\mathrm{G}}$ & 13n2477$^{\mathrm{G}}$ & 13n3213$^{\mathrm{G}}$ & 13n4094 & 13n4787 \\
12n776$^{\mathrm{G}}$ & 13n696$^{\mathrm{G}}$ & 13n1631$^{\mathrm{G}}$ & 13n2478$^{\mathrm{G}}$ & 13n3215$^{\mathrm{G}}$ & 13n4095 & 13n4788 \\
12n777$^{\mathrm{G}}$ & 13n698$^{\mathrm{G}}$ & 13n1632$^{\mathrm{G}}$ & 13n2487 & 13n3216$^{\mathrm{G}}$ & 13n4097 & 13n4792$^{\mathrm{G}}$ \\
12n780$^{\mathrm{G}}$ & 13n699 & 13n1634 & 13n2488$^{\mathrm{G}}$ & 13n3220 & 13n4098 & 13n4796$^{\mathrm{G}}$ \\
12n782$^{\mathrm{G}}$ & 13n702$^{\mathrm{G}}$ & 13n1635$^{\mathrm{G}}$ & 13n2489$^{\mathrm{G}}$ & 13n3221$^{\mathrm{G}}$ & 13n4102$^{\mathrm{G}}$ & 13n4798$^{\mathrm{G}}$ \\
12n783$^{\mathrm{G}}$ & 13n703$^{\mathrm{G}}$ & 13n1638$^{\mathrm{G}}$ & 13n2493$^{\mathrm{G}}$ & 13n3222 & 13n4106 & 13n4799$^{\mathrm{G}}$ \\
12n784$^{\mathrm{G}}$ & 13n704$^{\mathrm{G}}$ & 13n1641$^{\mathrm{G}}$ & 13n2495$^{\mathrm{G}}$ & 13n3226 & 13n4107 & 13n4801$^{\mathrm{G}}$ \\
12n788$^{\mathrm{G}}$ & 13n705$^{\mathrm{G}}$ & 13n1644$^{\mathrm{G}}$ & 13n2497$^{\mathrm{G}}$ & 13n3228 & 13n4110$^{\mathrm{G}}$ & 13n4803 \\
12n790$^{\mathrm{G}}$ & 13n706$^{\mathrm{G}}$ & 13n1650 & 13n2499$^{\mathrm{G}}$ & 13n3229$^{\mathrm{G}}$ & 13n4112$^{\mathrm{c}}$ & 13n4809 \\
12n792$^{\mathrm{G}}$ & 13n708$^{\mathrm{G}}$ & 13n1657 & 13n2500 & 13n3230$^{\mathrm{G}}$ & 13n4113$^{\mathrm{c}}$ & 13n4813 \\
12n794$^{\mathrm{G}}$ & 13n710$^{\mathrm{G}}$ & 13n1664 & 13n2503$^{\mathrm{G}}$ & 13n3232 & 13n4118$^{\mathrm{G}}$ & 13n4814$^{\mathrm{G}}$ \\
12n795$^{\mathrm{G}}$ & 13n712 & 13n1665 & 13n2504$^{\mathrm{t}}$ & 13n3235$^{\mathrm{G}}$ & 13n4119$^{\mathrm{G}}$ & 13n4815$^{\mathrm{G}}$ \\
12n799$^{\mathrm{G}}$ & 13n713$^{\mathrm{G}}$ & 13n1666 & 13n2505$^{\mathrm{G}}$ & 13n3236$^{\mathrm{G}}$ & 13n4120 & 13n4816$^{\mathrm{G}}$ \\
12n800$^{\mathrm{G}}$ & 13n717$^{\mathrm{G}}$ & 13n1667 & 13n2506$^{\mathrm{G}}$ & 13n3237 & 13n4121 & 13n4824$^{\mathrm{G}}$ \\
12n802$^{\mathrm{G}}$ & 13n718$^{\mathrm{G}}$ & 13n1670 & 13n2507 & 13n3240 & 13n4122 & 13n4827 \\
12n819$^{\mathrm{G}}$ & 13n722$^{\mathrm{G}}$ & 13n1673$^{\mathrm{G}}$ & 13n2509 & 13n3241 & 13n4126 & 13n4829$^{\mathrm{G}}$ \\
12n821$^{\mathrm{G}}$ & 13n723$^{\mathrm{G}}$ & 13n1678 & 13n2510$^{\mathrm{G}}$ & 13n3243$^{\mathrm{G}}$ & 13n4129$^{\mathrm{G}}$ & 13n4830$^{\mathrm{G}}$ \\
12n822$^{\mathrm{G}}$ & 13n726$^{\mathrm{G}}$ & 13n1679 & 13n2511$^{\mathrm{G}}$ & 13n3246$^{\mathrm{G}}$ & 13n4131$^{\mathrm{G}}$ & 13n4833$^{\mathrm{G}}$ \\
12n824$^{\mathrm{G}}$ & 13n728$^{\mathrm{G}}$ & 13n1680$^{\mathrm{G}}$ & 13n2512$^{\mathrm{G}}$ & 13n3248 & 13n4134$^{\mathrm{G}}$ & 13n4835 \\
12n825$^{\mathrm{G}}$ & 13n736$^{\mathrm{G}}$ & 13n1686$^{\mathrm{G}}$ & 13n2515$^{\mathrm{G}}$ & 13n3249$^{\mathrm{G}}$ & 13n4136$^{\mathrm{G}}$ & 13n4839$^{\mathrm{G}}$ \\
12n826$^{\mathrm{G}}$ & 13n738$^{\mathrm{G}}$ & 13n1688 & 13n2516 & 13n3252$^{\mathrm{G}}$ & 13n4137$^{\mathrm{G}}$ & 13n4840$^{\mathrm{G}}$ \\
12n829$^{\mathrm{G}}$ & 13n740$^{\mathrm{G}}$ & 13n1689$^{\mathrm{G}}$ & 13n2517$^{\mathrm{G}}$ & 13n3255$^{\mathrm{G}}$ & 13n4139$^{\mathrm{G}}$ & 13n4842$^{\mathrm{G}}$ \\
12n841$^{\mathrm{G}}$ & 13n742$^{\mathrm{G}}$ & 13n1691 & 13n2519 & 13n3258 & 13n4140 & 13n4843$^{\mathrm{G}}$ \\
12n842$^{\mathrm{G}}$ & 13n745$^{\mathrm{G}}$ & 13n1692$^{\mathrm{G}}$ & 13n2520$^{\mathrm{G}}$ & 13n3259$^{\mathrm{G}}$ & 13n4142 & 13n4846$^{\mathrm{c}}$ \\
12n853$^{\mathrm{G}}$ & 13n747 & 13n1693$^{\mathrm{G}}$ & 13n2523$^{\mathrm{G}}$ & 13n3262$^{\mathrm{G}}$ & 13n4144$^{\mathrm{G}}$ & 13n4847$^{\mathrm{G}}$ \\
12n857$^{\mathrm{G}}$ & 13n748$^{\mathrm{G}}$ & 13n1695$^{\mathrm{G}}$ & 13n2524 & 13n3264 & 13n4145$^{\mathrm{G}}$ & 13n4848$^{\mathrm{G}}$ \\
12n861$^{\mathrm{G}}$ & 13n752$^{\mathrm{G}}$ & 13n1698$^{\mathrm{G}}$ & 13n2525$^{\mathrm{G}}$ & 13n3266$^{\mathrm{G}}$ & 13n4146$^{\mathrm{G}}$ & 13n4849 \\
12n864$^{\mathrm{G}}$ & 13n756$^{\mathrm{c}}$ & 13n1699 & 13n2530 & 13n3269$^{\mathrm{G}}$ & 13n4147 & 13n4850 \\
12n865$^{\mathrm{G}}$ & 13n757 & 13n1702$^{\mathrm{G}}$ & 13n2531$^{\mathrm{G}}$ & 13n3270 & 13n4148$^{\mathrm{G}}$ & 13n4851 \\
12n866$^{\mathrm{G}}$ & 13n759$^{\mathrm{G}}$ & 13n1704$^{\mathrm{G}}$ & 13n2534 & 13n3272 & 13n4149$^{\mathrm{G}}$ & 13n4852$^{\mathrm{G}}$ \\
12n872$^{\mathrm{G}}$ & 13n760 & 13n1705$^{\mathrm{G}}$ & 13n2536 & 13n3277$^{\mathrm{G}}$ & 13n4150$^{\mathrm{G}}$ & 13n4853 \\
12n875$^{\mathrm{G}}$ & 13n762$^{\mathrm{G}}$ & 13n1710$^{\mathrm{G}}$ & 13n2537 & 13n3278$^{\mathrm{G}}$ & 13n4151$^{\mathrm{G}}$ & 13n4854$^{\mathrm{c}}$ \\
12n880$^{\mathrm{G}}$ & 13n764$^{\mathrm{G}}$ & 13n1711$^{\mathrm{G}}$ & 13n2541$^{\mathrm{G}}$ & 13n3279 & 13n4152 & 13n4855$^{\mathrm{G}}$ \\
12n885$^{\mathrm{G}}$ & 13n767$^{\mathrm{G}}$ & 13n1721$^{\mathrm{G}}$ & 13n2549 & 13n3281 & 13n4155$^{\mathrm{G}}$ & 13n4856$^{\mathrm{G}}$ \\
13n2$^{\mathrm{G}}$ & 13n775 & 13n1722$^{\mathrm{G}}$ & 13n2551 & 13n3283 & 13n4161 & 13n4857$^{\mathrm{G}}$ \\
13n3$^{\mathrm{G}}$ & 13n777$^{\mathrm{G}}$ & 13n1723$^{\mathrm{G}}$ & 13n2552 & 13n3284$^{\mathrm{G}}$ & 13n4163 & 13n4858$^{\mathrm{G}}$ \\
13n6 & 13n779$^{\mathrm{G}}$ & 13n1724$^{\mathrm{G}}$ & 13n2553 & 13n3286$^{\mathrm{G}}$ & 13n4164 & 13n4860 \\
13n8$^{\mathrm{G}}$ & 13n781$^{\mathrm{G}}$ & 13n1726$^{\mathrm{G}}$ & 13n2554 & 13n3288$^{\mathrm{G}}$ & 13n4165$^{\mathrm{G}}$ & 13n4861 \\
13n9$^{\mathrm{G}}$ & 13n784$^{\mathrm{G}}$ & 13n1728$^{\mathrm{G}}$ & 13n2558 & 13n3289 & 13n4169 & 13n4862 \\
13n10$^{\mathrm{G}}$ & 13n785$^{\mathrm{G}}$ & 13n1731$^{\mathrm{G}}$ & 13n2564$^{\mathrm{G}}$ & 13n3290$^{\mathrm{G}}$ & 13n4171$^{\mathrm{G}}$ & 13n4864 \\
13n14$^{\mathrm{G}}$ & 13n794$^{\mathrm{G}}$ & 13n1733 & 13n2567$^{\mathrm{G}}$ & 13n3295$^{\mathrm{G}}$ & 13n4172$^{\mathrm{G}}$ & 13n4869 \\
13n17 & 13n801 & 13n1734 & 13n2568 & 13n3299 & 13n4173 & 13n4871 \\
13n20$^{\mathrm{G}}$ & 13n806$^{\mathrm{G}}$ & 13n1737$^{\mathrm{G}}$ & 13n2570 & 13n3300$^{\mathrm{G}}$ & 13n4175 & 13n4872 \\
13n21$^{\mathrm{G}}$ & 13n810$^{\mathrm{G}}$ & 13n1738$^{\mathrm{G}}$ & 13n2572 & 13n3305 & 13n4177$^{\mathrm{G}}$ & 13n4873$^{\mathrm{G}}$ \\
13n22$^{\mathrm{G}}$ & 13n811$^{\mathrm{G}}$ & 13n1751$^{\mathrm{G}}$ & 13n2573 & 13n3306$^{\mathrm{G}}$ & 13n4182$^{\mathrm{G}}$ & 13n4874 \\
13n23$^{\mathrm{G}}$ & 13n812$^{\mathrm{G}}$ & 13n1752$^{\mathrm{G}}$ & 13n2574 & 13n3308$^{\mathrm{G}}$ & 13n4184$^{\mathrm{G}}$ & 13n4878$^{\mathrm{G}}$ \\
13n25 & 13n813$^{\mathrm{G}}$ & 13n1759 & 13n2575$^{\mathrm{G}}$ & 13n3309$^{\mathrm{G}}$ & 13n4186 & 13n4880$^{\mathrm{G}}$ \\
13n27$^{\mathrm{G}}$ & 13n816 & 13n1763$^{\mathrm{G}}$ & 13n2579$^{\mathrm{G}}$ & 13n3313$^{\mathrm{G}}$ & 13n4188$^{\mathrm{G}}$ & 13n4882$^{\mathrm{G}}$ \\
13n30$^{\mathrm{w}}$ & 13n817$^{\mathrm{G}}$ & 13n1764$^{\mathrm{G}}$ & 13n2582 & 13n3316$^{\mathrm{G}}$ & 13n4189 & 13n4886$^{\mathrm{G}}$ \\
13n35$^{\mathrm{G}}$ & 13n820 & 13n1766$^{\mathrm{G}}$ & 13n2583 & 13n3318 & 13n4191$^{\mathrm{G}}$ & 13n4889$^{\mathrm{G}}$ \\
13n38$^{\mathrm{G}}$ & 13n822$^{\mathrm{G}}$ & 13n1771 & 13n2585$^{\mathrm{G}}$ & 13n3320$^{\mathrm{G}}$ & 13n4193$^{\mathrm{G}}$ & 13n4893 \\
13n39$^{\mathrm{G}}$ & 13n824 & 13n1783$^{\mathrm{G}}$ & 13n2588$^{\mathrm{G}}$ & 13n3321$^{\mathrm{G}}$ & 13n4194$^{\mathrm{G}}$ & 13n4895$^{\mathrm{G}}$ \\
13n45$^{\mathrm{w}}$ & 13n827 & 13n1787$^{\mathrm{G}}$ & 13n2591$^{\mathrm{G}}$ & 13n3322 & 13n4198 & 13n4899 \\
13n48$^{\mathrm{G}}$ & 13n828 & 13n1791 & 13n2593$^{\mathrm{G}}$ & 13n3323$^{\mathrm{G}}$ & 13n4201 & 13n4901$^{\mathrm{G}}$ \\
13n50 & 13n829$^{\mathrm{G}}$ & 13n1793 & 13n2596$^{\mathrm{G}}$ & 13n3324 & 13n4204 & 13n4912$^{\mathrm{G}}$ \\
13n51 & 13n830$^{\mathrm{G}}$ & 13n1796 & 13n2598 & 13n3327$^{\mathrm{G}}$ & 13n4208 & 13n4914 \\
13n52$^{\mathrm{G}}$ & 13n836$^{\mathrm{G}}$ & 13n1797 & 13n2602$^{\mathrm{G}}$ & 13n3328$^{\mathrm{G}}$ & 13n4209 & 13n4916 \\
13n53$^{\mathrm{G}}$ & 13n843$^{\mathrm{G}}$ & 13n1798$^{\mathrm{G}}$ & 13n2603 & 13n3330 & 13n4210$^{\mathrm{G}}$ & 13n4919 \\
13n57$^{\mathrm{G}}$ & 13n844$^{\mathrm{G}}$ & 13n1800$^{\mathrm{G}}$ & 13n2614$^{\mathrm{c}}$ & 13n3333 & 13n4214$^{\mathrm{G}}$ & 13n4920$^{\mathrm{G}}$ \\
13n63$^{\mathrm{G}}$ & 13n846$^{\mathrm{G}}$ & 13n1803 & 13n2616$^{\mathrm{G}}$ & 13n3334$^{\mathrm{c}}$ & 13n4216$^{\mathrm{G}}$ & 13n4922$^{\mathrm{G}}$ \\
13n64$^{\mathrm{G}}$ & 13n847$^{\mathrm{G}}$ & 13n1808 & 13n2619 & 13n3336$^{\mathrm{G}}$ & 13n4217$^{\mathrm{G}}$ & 13n4924 \\
13n66$^{\mathrm{G}}$ & 13n854$^{\mathrm{G}}$ & 13n1809$^{\mathrm{G}}$ & 13n2620 & 13n3338 & 13n4218 & 13n4925$^{\mathrm{G}}$ \\
13n67$^{\mathrm{G}}$ & 13n855$^{\mathrm{G}}$ & 13n1813$^{\mathrm{G}}$ & 13n2623$^{\mathrm{G}}$ & 13n3340$^{\mathrm{G}}$ & 13n4221 & 13n4926 \\
13n69$^{\mathrm{G}}$ & 13n859 & 13n1817$^{\mathrm{G}}$ & 13n2624$^{\mathrm{c}}$ & 13n3342$^{\mathrm{G}}$ & 13n4224$^{\mathrm{G}}$ & 13n4933 \\
13n70$^{\mathrm{G}}$ & 13n864$^{\mathrm{G}}$ & 13n1819 & 13n2625 & 13n3344$^{\mathrm{G}}$ & 13n4225 & 13n4935$^{\mathrm{G}}$ \\
13n72$^{\mathrm{G}}$ & 13n867$^{\mathrm{G}}$ & 13n1820$^{\mathrm{G}}$ & 13n2626$^{\mathrm{G}}$ & 13n3352 & 13n4227 & 13n4936 \\
13n73$^{\mathrm{G}}$ & 13n873$^{\mathrm{G}}$ & 13n1823$^{\mathrm{G}}$ & 13n2630 & 13n3353 & 13n4232 & 13n4937$^{\mathrm{c}}$ \\
13n75 & 13n875$^{\mathrm{G}}$ & 13n1824$^{\mathrm{G}}$ & 13n2631$^{\mathrm{G}}$ & 13n3357 & 13n4236$^{\mathrm{G}}$ & 13n4938 \\
13n78$^{\mathrm{G}}$ & 13n881 & 13n1828$^{\mathrm{G}}$ & 13n2633$^{\mathrm{G}}$ & 13n3358 & 13n4237$^{\mathrm{G,w}}$ & 13n4942$^{\mathrm{G}}$ \\
13n80$^{\mathrm{w}}$ & 13n887$^{\mathrm{G}}$ & 13n1829$^{\mathrm{G}}$ & 13n2634$^{\mathrm{G}}$ & 13n3360$^{\mathrm{G}}$ & 13n4240$^{\mathrm{G}}$ & 13n4948 \\
13n85 & 13n893$^{\mathrm{G}}$ & 13n1831$^{\mathrm{G}}$ & 13n2637$^{\mathrm{G}}$ & 13n3361 & 13n4247$^{\mathrm{G}}$ & 13n4953$^{\mathrm{G}}$ \\
13n86 & 13n895 & 13n1834$^{\mathrm{G}}$ & 13n2639$^{\mathrm{G,w}}$ & 13n3362$^{\mathrm{G}}$ & 13n4252$^{\mathrm{G}}$ & 13n4955 \\
13n87$^{\mathrm{G}}$ & 13n896$^{\mathrm{G}}$ & 13n1835$^{\mathrm{G}}$ & 13n2641 & 13n3368 & 13n4255$^{\mathrm{G}}$ & 13n4956$^{\mathrm{G}}$ \\
13n88$^{\mathrm{G}}$ & 13n899$^{\mathrm{G}}$ & 13n1836 & 13n2642$^{\mathrm{G}}$ & 13n3370$^{\mathrm{G}}$ & 13n4257 & 13n4957 \\
13n95$^{\mathrm{G}}$ & 13n903$^{\mathrm{G}}$ & 13n1837 & 13n2646$^{\mathrm{G}}$ & 13n3374 & 13n4258 & 13n4958$^{\mathrm{c}}$ \\
13n96 & 13n904 & 13n1839$^{\mathrm{G}}$ & 13n2651 & 13n3376$^{\mathrm{G}}$ & 13n4259$^{\mathrm{G}}$ & 13n4959$^{\mathrm{G}}$ \\
13n97$^{\mathrm{G}}$ & 13n908$^{\mathrm{G}}$ & 13n1841 & 13n2652$^{\mathrm{G}}$ & 13n3378 & 13n4260$^{\mathrm{G}}$ & 13n4962 \\
13n100$^{\mathrm{G}}$ & 13n910$^{\mathrm{G}}$ & 13n1842$^{\mathrm{G}}$ & 13n2654 & 13n3384$^{\mathrm{G}}$ & 13n4261$^{\mathrm{G}}$ & 13n4963$^{\mathrm{G}}$ \\
13n101$^{\mathrm{G}}$ & 13n913$^{\mathrm{G}}$ & 13n1843$^{\mathrm{G}}$ & 13n2656$^{\mathrm{c}}$ & 13n3388$^{\mathrm{G}}$ & 13n4263 & 13n4966 \\
13n102$^{\mathrm{G}}$ & 13n914$^{\mathrm{G}}$ & 13n1845$^{\mathrm{G}}$ & 13n2658$^{\mathrm{G}}$ & 13n3389 & 13n4264 & 13n4976$^{\mathrm{G}}$ \\
13n104$^{\mathrm{G}}$ & 13n918$^{\mathrm{G}}$ & 13n1847$^{\mathrm{G}}$ & 13n2659$^{\mathrm{G}}$ & 13n3392$^{\mathrm{G}}$ & 13n4267$^{\mathrm{G}}$ & 13n4978 \\
13n110 & 13n921$^{\mathrm{G}}$ & 13n1852 & 13n2661 & 13n3395$^{\mathrm{c}}$ & 13n4269$^{\mathrm{G}}$ & 13n4979$^{\mathrm{G}}$ \\
13n112 & 13n926$^{\mathrm{G}}$ & 13n1853$^{\mathrm{G}}$ & 13n2662$^{\mathrm{G}}$ & 13n3396$^{\mathrm{c}}$ & 13n4270 & 13n4980$^{\mathrm{G}}$ \\
13n113 & 13n928$^{\mathrm{G}}$ & 13n1854$^{\mathrm{G}}$ & 13n2663$^{\mathrm{G}}$ & 13n3397$^{\mathrm{G}}$ & 13n4272 & 13n4982 \\
13n114$^{\mathrm{G}}$ & 13n934$^{\mathrm{G}}$ & 13n1856$^{\mathrm{G}}$ & 13n2664 & 13n3403$^{\mathrm{G}}$ & 13n4273$^{\mathrm{G}}$ & 13n4986$^{\mathrm{G}}$ \\
13n115 & 13n940 & 13n1857 & 13n2667 & 13n3407$^{\mathrm{G}}$ & 13n4274$^{\mathrm{c}}$ & 13n4987$^{\mathrm{G}}$ \\
13n116 & 13n944$^{\mathrm{G}}$ & 13n1858$^{\mathrm{G}}$ & 13n2668$^{\mathrm{c}}$ & 13n3408$^{\mathrm{G}}$ & 13n4275$^{\mathrm{c}}$ & 13n4995$^{\mathrm{G}}$ \\
13n117 & 13n946$^{\mathrm{G}}$ & 13n1859 & 13n2675 & 13n3411$^{\mathrm{G}}$ & 13n4278$^{\mathrm{G}}$ & 13n4999 \\
13n118$^{\mathrm{G}}$ & 13n950$^{\mathrm{G}}$ & 13n1866$^{\mathrm{G}}$ & 13n2676$^{\mathrm{G}}$ & 13n3412$^{\mathrm{G}}$ & 13n4279$^{\mathrm{G}}$ & 13n5001 \\
13n121$^{\mathrm{G}}$ & 13n953$^{\mathrm{G}}$ & 13n1869$^{\mathrm{G}}$ & 13n2680$^{\mathrm{G}}$ & 13n3414$^{\mathrm{G}}$ & 13n4282$^{\mathrm{G}}$ & 13n5002 \\
13n122$^{\mathrm{G}}$ & 13n956$^{\mathrm{G}}$ & 13n1877$^{\mathrm{G}}$ & 13n2682 & 13n3417$^{\mathrm{G}}$ & 13n4284$^{\mathrm{G}}$ & 13n5003 \\
13n123$^{\mathrm{G}}$ & 13n957$^{\mathrm{G}}$ & 13n1880 & 13n2684 & 13n3419 & 13n4285$^{\mathrm{G}}$ & 13n5004 \\
13n124$^{\mathrm{G}}$ & 13n960$^{\mathrm{G}}$ & 13n1882$^{\mathrm{G}}$ & 13n2686$^{\mathrm{c}}$ & 13n3420$^{\mathrm{G}}$ & 13n4286$^{\mathrm{c}}$ & 13n5005$^{\mathrm{G}}$ \\
13n129$^{\mathrm{G}}$ & 13n969$^{\mathrm{G}}$ & 13n1883$^{\mathrm{G}}$ & 13n2687 & 13n3423$^{\mathrm{c}}$ & 13n4288$^{\mathrm{c}}$ & 13n5006 \\
13n130$^{\mathrm{G}}$ & 13n970$^{\mathrm{G}}$ & 13n1884$^{\mathrm{G}}$ & 13n2690 & 13n3425$^{\mathrm{G}}$ & 13n4289 & 13n5012$^{\mathrm{G}}$ \\
13n134$^{\mathrm{G}}$ & 13n990$^{\mathrm{G}}$ & 13n1887$^{\mathrm{G}}$ & 13n2694 & 13n3427$^{\mathrm{G}}$ & 13n4290$^{\mathrm{G}}$ & 13n5021 \\
13n137$^{\mathrm{G}}$ & 13n997$^{\mathrm{G}}$ & 13n1889 & 13n2696 & 13n3433 & 13n4292$^{\mathrm{G}}$ & 13n5022 \\
13n141 & 13n998$^{\mathrm{G}}$ & 13n1890$^{\mathrm{G}}$ & 13n2700$^{\mathrm{G}}$ & 13n3434$^{\mathrm{G}}$ & 13n4293$^{\mathrm{G}}$ & 13n5026$^{\mathrm{G}}$ \\
13n143 & 13n1000$^{\mathrm{G}}$ & 13n1892 & 13n2701$^{\mathrm{G}}$ & 13n3435 & 13n4294$^{\mathrm{G}}$ & 13n5028 \\
13n144 & 13n1005$^{\mathrm{G}}$ & 13n1893$^{\mathrm{G}}$ & 13n2702$^{\mathrm{G}}$ & 13n3436$^{\mathrm{G}}$ & 13n4295$^{\mathrm{G}}$ & 13n5029$^{\mathrm{G}}$ \\
13n145 & 13n1012$^{\mathrm{G}}$ & 13n1895$^{\mathrm{G}}$ & 13n2703 & 13n3437$^{\mathrm{G}}$ & 13n4297 & 13n5030 \\
13n147$^{\mathrm{G}}$ & 13n1017$^{\mathrm{G}}$ & 13n1896 & 13n2707 & 13n3439$^{\mathrm{G}}$ & 13n4299 & 13n5031 \\
13n149 & 13n1023 & 13n1898$^{\mathrm{G}}$ & 13n2712$^{\mathrm{G}}$ & 13n3440$^{\mathrm{G}}$ & 13n4300 & 13n5033 \\
13n153 & 13n1025$^{\mathrm{G}}$ & 13n1900$^{\mathrm{G}}$ & 13n2713$^{\mathrm{G}}$ & 13n3444$^{\mathrm{G}}$ & 13n4301$^{\mathrm{G}}$ & 13n5035$^{\mathrm{G}}$ \\
13n157$^{\mathrm{G}}$ & 13n1026 & 13n1908 & 13n2715 & 13n3447$^{\mathrm{G}}$ & 13n4302 & 13n5038 \\
13n160 & 13n1027 & 13n1910$^{\mathrm{G}}$ & 13n2716$^{\mathrm{G}}$ & 13n3448$^{\mathrm{c}}$ & 13n4306 & 13n5041$^{\mathrm{G}}$ \\
13n161 & 13n1028$^{\mathrm{G}}$ & 13n1913$^{\mathrm{G}}$ & 13n2718 & 13n3450$^{\mathrm{G}}$ & 13n4307$^{\mathrm{G}}$ & 13n5042$^{\mathrm{G}}$ \\
13n163 & 13n1031 & 13n1914$^{\mathrm{G}}$ & 13n2720$^{\mathrm{G}}$ & 13n3452$^{\mathrm{G}}$ & 13n4311 & 13n5044$^{\mathrm{c}}$ \\
13n164 & 13n1032$^{\mathrm{G}}$ & 13n1919$^{\mathrm{c}}$ & 13n2722$^{\mathrm{G}}$ & 13n3455$^{\mathrm{G}}$ & 13n4312$^{\mathrm{G}}$ & 13n5047 \\
13n165 & 13n1034 & 13n1922$^{\mathrm{G}}$ & 13n2723 & 13n3456$^{\mathrm{G}}$ & 13n4315$^{\mathrm{G}}$ & 13n5053$^{\mathrm{G}}$ \\
13n166 & 13n1035$^{\mathrm{G}}$ & 13n1923 & 13n2727$^{\mathrm{G}}$ & 13n3457 & 13n4318$^{\mathrm{G}}$ & 13n5058 \\
13n167 & 13n1036$^{\mathrm{G}}$ & 13n1924 & 13n2730$^{\mathrm{G}}$ & 13n3458$^{\mathrm{G}}$ & 13n4320$^{\mathrm{G}}$ & 13n5060$^{\mathrm{G}}$ \\
13n168 & 13n1039$^{\mathrm{G}}$ & 13n1931 & 13n2739$^{\mathrm{G}}$ & 13n3459 & 13n4323$^{\mathrm{G}}$ & 13n5066$^{\mathrm{G}}$ \\
13n171$^{\mathrm{G}}$ & 13n1040 & 13n1932$^{\mathrm{G}}$ & 13n2741$^{\mathrm{G}}$ & 13n3460 & 13n4325$^{\mathrm{G}}$ & 13n5067$^{\mathrm{G}}$ \\
13n173 & 13n1042 & 13n1941 & 13n2742$^{\mathrm{c}}$ & 13n3461 & 13n4326$^{\mathrm{G}}$ & 13n5068$^{\mathrm{G}}$ \\
13n174 & 13n1044$^{\mathrm{G}}$ & 13n1942$^{\mathrm{G}}$ & 13n2744$^{\mathrm{G}}$ & 13n3465$^{\mathrm{G}}$ & 13n4328$^{\mathrm{G}}$ & 13n5070 \\
13n176 & 13n1046$^{\mathrm{G}}$ & 13n1951$^{\mathrm{G}}$ & 13n2746$^{\mathrm{G}}$ & 13n3471$^{\mathrm{G}}$ & 13n4330$^{\mathrm{G}}$ & 13n5073$^{\mathrm{G}}$ \\
13n177 & 13n1053 & 13n1956 & 13n2747$^{\mathrm{c}}$ & 13n3473$^{\mathrm{G}}$ & 13n4331$^{\mathrm{G}}$ & 13n5078$^{\mathrm{G}}$ \\
13n178 & 13n1067 & 13n1960$^{\mathrm{G}}$ & 13n2748$^{\mathrm{G}}$ & 13n3474 & 13n4334$^{\mathrm{G}}$ & 13n5079$^{\mathrm{G}}$ \\
13n179 & 13n1068 & 13n1971$^{\mathrm{c}}$ & 13n2749 & 13n3475$^{\mathrm{G}}$ & 13n4335$^{\mathrm{G}}$ & 13n5081 \\
13n180 & 13n1070$^{\mathrm{G}}$ & 13n1972 & 13n2751 & 13n3476 & 13n4336$^{\mathrm{G}}$ & 13n5084 \\
13n182 & 13n1071 & 13n1979 & 13n2753$^{\mathrm{G}}$ & 13n3481$^{\mathrm{G}}$ & 13n4339$^{\mathrm{G}}$ & 13n5086 \\
13n186$^{\mathrm{G}}$ & 13n1085$^{\mathrm{G}}$ & 13n1983$^{\mathrm{G}}$ & 13n2759$^{\mathrm{G}}$ & 13n3482$^{\mathrm{G}}$ & 13n4341 & 13n5088 \\
13n189$^{\mathrm{G}}$ & 13n1092$^{\mathrm{G}}$ & 13n1985$^{\mathrm{G}}$ & 13n2760$^{\mathrm{G}}$ & 13n3483$^{\mathrm{G}}$ & 13n4342 & 13n5090 \\
13n195 & 13n1094$^{\mathrm{G}}$ & 13n1989 & 13n2762 & 13n3486 & 13n4346$^{\mathrm{G}}$ & 13n5091 \\
13n203 & 13n1098$^{\mathrm{G}}$ & 13n1991$^{\mathrm{G}}$ & 13n2763$^{\mathrm{G}}$ & 13n3487$^{\mathrm{G}}$ & 13n4347$^{\mathrm{G}}$ & 13n5092 \\
13n209 & 13n1101$^{\mathrm{G}}$ & 13n1994$^{\mathrm{G}}$ & 13n2764$^{\mathrm{G}}$ & 13n3488 & 13n4348$^{\mathrm{G}}$ & 13n5099$^{\mathrm{G}}$ \\
13n210 & 13n1102 & 13n2000$^{\mathrm{G}}$ & 13n2765 & 13n3496 & 13n4369$^{\mathrm{G}}$ & 13n5100 \\
13n215 & 13n1108$^{\mathrm{G}}$ & 13n2005$^{\mathrm{G}}$ & 13n2768$^{\mathrm{G}}$ & 13n3497 & 13n4370$^{\mathrm{G}}$ & 13n5109$^{\mathrm{G}}$ \\
13n216$^{\mathrm{G}}$ & 13n1109$^{\mathrm{G}}$ & 13n2006$^{\mathrm{G}}$ & 13n2771$^{\mathrm{G}}$ & 13n3498$^{\mathrm{G}}$ & 13n4372$^{\mathrm{G}}$ & 13n5110 \\*
13n219$^{\mathrm{G}}$ & 13n1114 & 13n2012 & 13n2772$^{\mathrm{G}}$ & 13n3506 & 13n4376$^{\mathrm{G}}$ &  \\*
13n220 & 13n1116$^{\mathrm{G}}$ & 13n2014 & 13n2773$^{\mathrm{G}}$ & 13n3509$^{\mathrm{G}}$ & 13n4378 &  \\*
\multicolumn{7}{@{}l@{}}{\usebox{\appendixLegendBox}} \\
\end{longtable}}

\section{Improved ranges}\label{app:improve}
For the following \nImproved{} knots, our calculations improve the recorded range without
determining the exact value. Each block lists the knot, its comparison range, and the
range proved here; entries are ordered down the blocks,
and the superscripts are explained at the foot of the table.
The starting ranges are those of the archived research table specified in
Section~\ref{sec:lower}; \nLatestRangeAgreements{} of the resulting ranges are
already recorded in the \KI{} snapshot of 9 September 2026.

{\footnotesize\setlength{\tabcolsep}{6pt}
\setlength{\ctKnot}{1.9cm}%
\setlength{\ctRange}{1.55cm}%
\setlength{\ctWidth}{\dimexpr2\ctKnot+4\ctRange+8\tabcolsep+2em+\arrayrulewidth\relax}%
\setlength{\LTleft}{\fill}
\setlength{\LTright}{\fill}
\sbox{\appendixLegendBox}{%
\begin{minipage}{\ctWidth}
\hrule height\heavyrulewidth\vspace{3pt}
\raggedright\setlength{\parindent}{0pt}\setlength{\parskip}{0pt}
\strut Unmarked: the obstruction from the linking pairing.\strut\par
\strut $^{\mathrm{t}}$ torsion orders.\strut\par
\strut $^{\mathrm{g}}$ generator bound of $H_1(\dbc(K))$.\strut\par
\strut $^{\mathrm{c}}$ generator bound of a higher cyclic cover.\strut\par
\strut $^{\mathrm{o}}$ correction-term obstruction to $u=2$.\strut\par
\strut $^{\mathrm{h}}$ correction-term obstruction to $u=3$.\strut\par
\strut $^{\mathrm{k}}$ no unknotting crossing in the alternating diagram (McCoy's theorem, Section~\ref{sec:lower}).\strut\par
\strut $^{\mathrm{m}}$ obstruction for Montesinos knots (Section~\ref{sec:lower}).\strut\par
\strut $^{\mathrm{G}}$ Greene's model and the surgery or correction-term obstruction (Section~\ref{sec:sweep}).\strut\par
\end{minipage}}
\begin{longtable}{@{}>{\raggedright\arraybackslash}p{\ctKnot}%
>{\centering\arraybackslash}p{\ctRange}%
>{\centering\arraybackslash}p{\ctRange}@{\quad}|@{\quad}>{\raggedright\arraybackslash}p{\ctKnot}%
>{\centering\arraybackslash}p{\ctRange}%
>{\centering\arraybackslash}p{\ctRange}@{}}
\hline
Knot & Baseline & Result & Knot & Baseline & Result \\ \hline
\endfirsthead
\multicolumn{6}{@{}p{\ctWidth}@{}}{\emph{Improved ranges, continued from the previous page.}} \\[2pt]
\hline
Knot & Baseline & Result & Knot & Baseline & Result \\ \hline
\endhead
\bottomrule
\multicolumn{6}{@{}r@{}}{\emph{continued on the next page}} \\
\endfoot
\endlastfoot
11a354$^{\mathrm{o}}$ & $[2,4]$ & $[3,4]$ & 13n1425$^{\mathrm{G}}$ & $[1,3]$ & $[2,3]$ \\
12a156$^{\mathrm{o}}$ & $[2,4]$ & $[3,4]$ & 13n1429 & $[1,3]$ & $[2,3]$ \\
12a392$^{\mathrm{o}}$ & $[2,4]$ & $[3,4]$ & 13n1454$^{\mathrm{G}}$ & $[1,3]$ & $[2,3]$ \\
12a1037$^{\mathrm{o}}$ & $[2,4]$ & $[3,4]$ & 13n1475 & $[1,3]$ & $[2,3]$ \\
12a1097$^{\mathrm{o}}$ & $[2,4]$ & $[3,4]$ & 13n1551$^{\mathrm{G}}$ & $[1,3]$ & $[2,3]$ \\
12a1113$^{\mathrm{o}}$ & $[2,4]$ & $[3,4]$ & 13n1582$^{\mathrm{G}}$ & $[1,3]$ & $[2,3]$ \\
12n75$^{\mathrm{G}}$ & $[1,3]$ & $[2,3]$ & 13n1586$^{\mathrm{G}}$ & $[1,3]$ & $[2,3]$ \\
12n84$^{\mathrm{G}}$ & $[1,3]$ & $[2,3]$ & 13n1651 & $[1,4]$ & $[2,4]$ \\
12n92$^{\mathrm{G}}$ & $[1,3]$ & $[2,3]$ & 13n1653 & $[1,3]$ & $[2,3]$ \\
12n101$^{\mathrm{G}}$ & $[1,3]$ & $[2,3]$ & 13n1656$^{\mathrm{G}}$ & $[1,3]$ & $[2,3]$ \\
12n137$^{\mathrm{G}}$ & $[1,3]$ & $[2,3]$ & 13n1659$^{\mathrm{G}}$ & $[1,3]$ & $[2,3]$ \\
12n140$^{\mathrm{G}}$ & $[1,3]$ & $[2,3]$ & 13n1672 & $[1,3]$ & $[2,3]$ \\
12n167$^{\mathrm{m}}$ & $[1,3]$ & $[2,3]$ & 13n1683 & $[1,3]$ & $[2,3]$ \\
12n216$^{\mathrm{G}}$ & $[1,3]$ & $[2,3]$ & 13n1744$^{\mathrm{G}}$ & $[1,3]$ & $[2,3]$ \\
12n291$^{\mathrm{G}}$ & $[1,3]$ & $[2,3]$ & 13n1749$^{\mathrm{G}}$ & $[1,3]$ & $[2,3]$ \\
12n304$^{\mathrm{m}}$ & $[1,3]$ & $[2,3]$ & 13n1775$^{\mathrm{G}}$ & $[1,3]$ & $[2,3]$ \\
12n307$^{\mathrm{m}}$ & $[1,3]$ & $[2,3]$ & 13n1804$^{\mathrm{G}}$ & $[1,3]$ & $[2,3]$ \\
12n443$^{\mathrm{G}}$ & $[1,3]$ & $[2,3]$ & 13n1901$^{\mathrm{G}}$ & $[1,3]$ & $[2,3]$ \\
12n454$^{\mathrm{G}}$ & $[1,3]$ & $[2,3]$ & 13n1917$^{\mathrm{G}}$ & $[1,3]$ & $[2,3]$ \\
12n522$^{\mathrm{m}}$ & $[1,3]$ & $[2,3]$ & 13n1955$^{\mathrm{G}}$ & $[1,3]$ & $[2,3]$ \\
12n524$^{\mathrm{G}}$ & $[1,3]$ & $[2,3]$ & 13n1962$^{\mathrm{G}}$ & $[1,3]$ & $[2,3]$ \\
12n531$^{\mathrm{G}}$ & $[1,3]$ & $[2,3]$ & 13n1974$^{\mathrm{G}}$ & $[1,3]$ & $[2,3]$ \\
12n569$^{\mathrm{G}}$ & $[1,3]$ & $[2,3]$ & 13n1988$^{\mathrm{G}}$ & $[1,3]$ & $[2,3]$ \\
12n631$^{\mathrm{G}}$ & $[1,3]$ & $[2,3]$ & 13n1992 & $[1,3]$ & $[2,3]$ \\
12n675$^{\mathrm{G}}$ & $[1,3]$ & $[2,3]$ & 13n2008 & $[1,3]$ & $[2,3]$ \\
12n721$^{\mathrm{m}}$ & $[1,3]$ & $[2,3]$ & 13n2034 & $[1,3]$ & $[2,3]$ \\
12n804$^{\mathrm{G}}$ & $[1,3]$ & $[2,3]$ & 13n2064$^{\mathrm{G}}$ & $[1,3]$ & $[2,3]$ \\
12n811$^{\mathrm{G}}$ & $[1,3]$ & $[2,3]$ & 13n2105$^{\mathrm{G}}$ & $[1,3]$ & $[2,3]$ \\
12n818$^{\mathrm{G}}$ & $[1,3]$ & $[2,3]$ & 13n2153$^{\mathrm{G}}$ & $[1,3]$ & $[2,3]$ \\
12n833$^{\mathrm{G}}$ & $[1,3]$ & $[2,3]$ & 13n2202$^{\mathrm{G}}$ & $[1,3]$ & $[2,3]$ \\
12n854$^{\mathrm{G}}$ & $[1,3]$ & $[2,3]$ & 13n2245 & $[1,3]$ & $[2,3]$ \\
12n855$^{\mathrm{G}}$ & $[1,3]$ & $[2,3]$ & 13n2297$^{\mathrm{G}}$ & $[1,3]$ & $[2,3]$ \\
12n859$^{\mathrm{G}}$ & $[1,3]$ & $[2,3]$ & 13n2303$^{\mathrm{G}}$ & $[1,3]$ & $[2,3]$ \\
12n873$^{\mathrm{c}}$ & $[1,3]$ & $[2,3]$ & 13n2310 & $[1,3]$ & $[2,3]$ \\
13a12$^{\mathrm{k}}$ & $[1,3]$ & $[2,3]$ & 13n2378 & $[1,3]$ & $[2,3]$ \\
13a19$^{\mathrm{k}}$ & $[1,3]$ & $[2,3]$ & 13n2382 & $[1,3]$ & $[2,3]$ \\
13a120$^{\mathrm{k}}$ & $[1,3]$ & $[2,3]$ & 13n2383$^{\mathrm{c}}$ & $[1,3]$ & $[2,3]$ \\
13a121$^{\mathrm{k}}$ & $[1,3]$ & $[2,3]$ & 13n2472$^{\mathrm{G}}$ & $[1,3]$ & $[2,3]$ \\
13a133$^{\mathrm{k}}$ & $[1,4]$ & $[2,4]$ & 13n2482$^{\mathrm{G}}$ & $[1,3]$ & $[2,3]$ \\
13a195$^{\mathrm{k}}$ & $[1,4]$ & $[2,4]$ & 13n2486$^{\mathrm{G}}$ & $[1,3]$ & $[2,3]$ \\
13a236$^{\mathrm{k}}$ & $[1,4]$ & $[2,4]$ & 13n2494$^{\mathrm{G}}$ & $[1,3]$ & $[2,3]$ \\
13a271$^{\mathrm{k}}$ & $[1,4]$ & $[2,4]$ & 13n2496$^{\mathrm{G}}$ & $[1,3]$ & $[2,3]$ \\
13a275$^{\mathrm{k}}$ & $[1,4]$ & $[2,4]$ & 13n2498$^{\mathrm{G}}$ & $[1,3]$ & $[2,3]$ \\
13a314$^{\mathrm{k}}$ & $[1,4]$ & $[2,4]$ & 13n2502$^{\mathrm{G}}$ & $[1,3]$ & $[2,3]$ \\
13a339$^{\mathrm{k}}$ & $[1,4]$ & $[2,4]$ & 13n2542 & $[1,3]$ & $[2,3]$ \\
13a523$^{\mathrm{k}}$ & $[1,3]$ & $[2,3]$ & 13n2546$^{\mathrm{G}}$ & $[1,3]$ & $[2,3]$ \\
13a579$^{\mathrm{k}}$ & $[1,3]$ & $[2,3]$ & 13n2548$^{\mathrm{G}}$ & $[1,3]$ & $[2,3]$ \\
13a650$^{\mathrm{o}}$ & $[2,4]$ & $[3,4]$ & 13n2562 & $[1,3]$ & $[2,3]$ \\
13a656$^{\mathrm{k}}$ & $[1,3]$ & $[2,3]$ & 13n2589 & $[1,3]$ & $[2,3]$ \\
13a863$^{\mathrm{k}}$ & $[1,3]$ & $[2,3]$ & 13n2594$^{\mathrm{G}}$ & $[1,3]$ & $[2,3]$ \\
13a1069$^{\mathrm{k}}$ & $[1,4]$ & $[2,4]$ & 13n2613 & $[1,3]$ & $[2,3]$ \\
13a1549$^{\mathrm{k}}$ & $[1,3]$ & $[2,3]$ & 13n2647 & $[1,3]$ & $[2,3]$ \\
13a1656$^{\mathrm{k}}$ & $[1,3]$ & $[2,3]$ & 13n2669$^{\mathrm{G}}$ & $[1,3]$ & $[2,3]$ \\
13a1698$^{\mathrm{k}}$ & $[1,4]$ & $[2,4]$ & 13n2688 & $[1,3]$ & $[2,3]$ \\
13a1712$^{\mathrm{k}}$ & $[1,3]$ & $[2,3]$ & 13n2745$^{\mathrm{G}}$ & $[1,3]$ & $[2,3]$ \\
13a2129$^{\mathrm{o}}$ & $[2,4]$ & $[3,4]$ & 13n2758$^{\mathrm{G}}$ & $[1,3]$ & $[2,3]$ \\
13a2767$^{\mathrm{o}}$ & $[2,4]$ & $[3,4]$ & 13n2767$^{\mathrm{G}}$ & $[1,3]$ & $[2,3]$ \\
13a2879$^{\mathrm{o}}$ & $[2,4]$ & $[3,4]$ & 13n2791 & $[1,3]$ & $[2,3]$ \\
13a3044$^{\mathrm{k}}$ & $[1,4]$ & $[2,4]$ & 13n2797$^{\mathrm{G}}$ & $[1,3]$ & $[2,3]$ \\
13a3107$^{\mathrm{o}}$ & $[2,4]$ & $[3,4]$ & 13n2808$^{\mathrm{G}}$ & $[1,3]$ & $[2,3]$ \\
13a3147$^{\mathrm{k}}$ & $[1,3]$ & $[2,3]$ & 13n2928$^{\mathrm{G}}$ & $[1,3]$ & $[2,3]$ \\
13a3149$^{\mathrm{k}}$ & $[1,3]$ & $[2,3]$ & 13n2929 & $[1,3]$ & $[2,3]$ \\
13a3176$^{\mathrm{o}}$ & $[2,4]$ & $[3,4]$ & 13n2933 & $[1,3]$ & $[2,3]$ \\
13a3177$^{\mathrm{k}}$ & $[1,3]$ & $[2,3]$ & 13n2948$^{\mathrm{G}}$ & $[1,3]$ & $[2,3]$ \\
13a3184$^{\mathrm{k}}$ & $[1,3]$ & $[2,3]$ & 13n2951 & $[1,3]$ & $[2,3]$ \\
13a3236$^{\mathrm{k}}$ & $[1,3]$ & $[2,3]$ & 13n2956 & $[1,3]$ & $[2,3]$ \\
13a3268$^{\mathrm{k}}$ & $[1,3]$ & $[2,3]$ & 13n2965$^{\mathrm{G}}$ & $[1,3]$ & $[2,3]$ \\
13a3595$^{\mathrm{o}}$ & $[2,4]$ & $[3,4]$ & 13n2966 & $[1,4]$ & $[2,4]$ \\
13a3693$^{\mathrm{o}}$ & $[2,4]$ & $[3,4]$ & 13n2974 & $[1,4]$ & $[2,4]$ \\
13a4088$^{\mathrm{o}}$ & $[2,4]$ & $[3,4]$ & 13n2975 & $[1,3]$ & $[2,3]$ \\
13a4196$^{\mathrm{o}}$ & $[2,4]$ & $[3,4]$ & 13n2976 & $[1,3]$ & $[2,3]$ \\
13a4214$^{\mathrm{o}}$ & $[2,4]$ & $[3,4]$ & 13n2989 & $[1,3]$ & $[2,3]$ \\
13a4233$^{\mathrm{o}}$ & $[2,4]$ & $[3,4]$ & 13n2991$^{\mathrm{G}}$ & $[1,3]$ & $[2,3]$ \\
13a4304$^{\mathrm{o}}$ & $[2,4]$ & $[3,4]$ & 13n3039$^{\mathrm{G}}$ & $[1,3]$ & $[2,3]$ \\
13a4454$^{\mathrm{o}}$ & $[2,4]$ & $[3,4]$ & 13n3081$^{\mathrm{G}}$ & $[1,3]$ & $[2,3]$ \\
13a4493$^{\mathrm{o}}$ & $[2,4]$ & $[3,4]$ & 13n3084 & $[1,3]$ & $[2,3]$ \\
13a4554$^{\mathrm{o}}$ & $[2,4]$ & $[3,4]$ & 13n3089$^{\mathrm{G}}$ & $[1,3]$ & $[2,3]$ \\
13a4757$^{\mathrm{h}}$ & $[3,5]$ & $[4,5]$ & 13n3104$^{\mathrm{G}}$ & $[1,3]$ & $[2,3]$ \\
13a4771$^{\mathrm{h}}$ & $[3,5]$ & $[4,5]$ & 13n3127 & $[1,3]$ & $[2,3]$ \\
13a4773$^{\mathrm{o}}$ & $[2,4]$ & $[3,4]$ & 13n3144$^{\mathrm{G}}$ & $[1,3]$ & $[2,3]$ \\
13a4780$^{\mathrm{h}}$ & $[3,5]$ & $[4,5]$ & 13n3160$^{\mathrm{G}}$ & $[1,3]$ & $[2,3]$ \\
13a4781$^{\mathrm{o}}$ & $[2,4]$ & $[3,4]$ & 13n3161$^{\mathrm{G}}$ & $[1,3]$ & $[2,3]$ \\
13a4790$^{\mathrm{o}}$ & $[2,4]$ & $[3,4]$ & 13n3225$^{\mathrm{G}}$ & $[1,3]$ & $[2,3]$ \\
13a4812$^{\mathrm{h}}$ & $[3,5]$ & $[4,5]$ & 13n3227$^{\mathrm{G}}$ & $[1,3]$ & $[2,3]$ \\
13a4813$^{\mathrm{o}}$ & $[2,4]$ & $[3,4]$ & 13n3250$^{\mathrm{G}}$ & $[1,3]$ & $[2,3]$ \\
13a4821$^{\mathrm{h}}$ & $[3,5]$ & $[4,5]$ & 13n3282$^{\mathrm{G}}$ & $[1,3]$ & $[2,3]$ \\
13a4839$^{\mathrm{h}}$ & $[3,5]$ & $[4,5]$ & 13n3298$^{\mathrm{G}}$ & $[1,3]$ & $[2,3]$ \\
13a4840$^{\mathrm{o}}$ & $[2,4]$ & $[3,4]$ & 13n3312$^{\mathrm{G}}$ & $[1,3]$ & $[2,3]$ \\
13a4841$^{\mathrm{h}}$ & $[3,5]$ & $[4,5]$ & 13n3317$^{\mathrm{G}}$ & $[1,3]$ & $[2,3]$ \\
13a4842$^{\mathrm{o}}$ & $[2,4]$ & $[3,4]$ & 13n3332$^{\mathrm{G}}$ & $[1,3]$ & $[2,3]$ \\
13a4851$^{\mathrm{o}}$ & $[2,4]$ & $[3,4]$ & 13n3347$^{\mathrm{G}}$ & $[1,3]$ & $[2,3]$ \\
13a4853$^{\mathrm{o}}$ & $[2,4]$ & $[3,4]$ & 13n3377$^{\mathrm{G}}$ & $[1,3]$ & $[2,3]$ \\
13a4855$^{\mathrm{o}}$ & $[2,4]$ & $[3,4]$ & 13n3421 & $[1,3]$ & $[2,3]$ \\
13a4859$^{\mathrm{h}}$ & $[3,5]$ & $[4,5]$ & 13n3449$^{\mathrm{G}}$ & $[1,3]$ & $[2,3]$ \\
13a4860$^{\mathrm{h}}$ & $[3,5]$ & $[4,5]$ & 13n3470 & $[1,3]$ & $[2,3]$ \\
13a4861$^{\mathrm{o}}$ & $[2,4]$ & $[3,4]$ & 13n3503 & $[1,3]$ & $[2,3]$ \\
13a4865$^{\mathrm{h}}$ & $[3,5]$ & $[4,5]$ & 13n3505 & $[1,3]$ & $[2,3]$ \\
13a4869$^{\mathrm{o}}$ & $[2,4]$ & $[3,4]$ & 13n3507 & $[1,3]$ & $[2,3]$ \\
13a4870$^{\mathrm{h}}$ & $[3,5]$ & $[4,5]$ & 13n3508$^{\mathrm{G}}$ & $[2,4]$ & $[3,4]$ \\
13a4871$^{\mathrm{o}}$ & $[2,4]$ & $[3,4]$ & 13n3512$^{\mathrm{G}}$ & $[1,3]$ & $[2,3]$ \\
13a4872$^{\mathrm{o}}$ & $[2,4]$ & $[3,4]$ & 13n3514$^{\mathrm{G}}$ & $[1,3]$ & $[2,3]$ \\
13a4876$^{\mathrm{h}}$ & $[3,5]$ & $[4,5]$ & 13n3516$^{\mathrm{G}}$ & $[1,3]$ & $[2,3]$ \\
13a4877$^{\mathrm{g}}$ & $[2,5]$ & $[3,5]$ & 13n3524 & $[1,3]$ & $[2,3]$ \\
13n31 & $[1,3]$ & $[2,3]$ & 13n3539 & $[1,3]$ & $[2,3]$ \\
13n41 & $[1,3]$ & $[2,3]$ & 13n3550 & $[1,3]$ & $[2,3]$ \\
13n81 & $[1,3]$ & $[2,3]$ & 13n3554 & $[1,3]$ & $[2,3]$ \\
13n84$^{\mathrm{G}}$ & $[1,3]$ & $[2,3]$ & 13n3558$^{\mathrm{G}}$ & $[1,3]$ & $[2,3]$ \\
13n94 & $[1,3]$ & $[2,3]$ & 13n3562$^{\mathrm{G}}$ & $[1,3]$ & $[2,3]$ \\
13n138$^{\mathrm{G}}$ & $[1,3]$ & $[2,3]$ & 13n3583$^{\mathrm{G}}$ & $[1,3]$ & $[2,3]$ \\
13n197 & $[1,3]$ & $[2,3]$ & 13n3595$^{\mathrm{G}}$ & $[1,3]$ & $[2,3]$ \\
13n199$^{\mathrm{G}}$ & $[1,3]$ & $[2,3]$ & 13n3688$^{\mathrm{G}}$ & $[1,3]$ & $[2,3]$ \\
13n231$^{\mathrm{G}}$ & $[1,3]$ & $[2,3]$ & 13n3700 & $[1,3]$ & $[2,3]$ \\
13n240$^{\mathrm{G}}$ & $[1,4]$ & $[2,4]$ & 13n3703$^{\mathrm{G}}$ & $[1,3]$ & $[2,3]$ \\
13n254$^{\mathrm{G}}$ & $[1,3]$ & $[2,3]$ & 13n3721 & $[1,3]$ & $[2,3]$ \\
13n290$^{\mathrm{G}}$ & $[1,3]$ & $[2,3]$ & 13n3724$^{\mathrm{G}}$ & $[1,3]$ & $[2,3]$ \\
13n299$^{\mathrm{G}}$ & $[1,4]$ & $[2,4]$ & 13n3728 & $[1,3]$ & $[2,3]$ \\
13n361$^{\mathrm{G}}$ & $[1,3]$ & $[2,3]$ & 13n3755$^{\mathrm{G}}$ & $[1,3]$ & $[2,3]$ \\
13n367 & $[1,3]$ & $[2,3]$ & 13n3787 & $[1,3]$ & $[2,3]$ \\
13n400 & $[1,3]$ & $[2,3]$ & 13n3801$^{\mathrm{G}}$ & $[1,3]$ & $[2,3]$ \\
13n406 & $[1,3]$ & $[2,3]$ & 13n3836$^{\mathrm{G}}$ & $[1,3]$ & $[2,3]$ \\
13n418 & $[1,3]$ & $[2,3]$ & 13n3868$^{\mathrm{G}}$ & $[1,3]$ & $[2,3]$ \\
13n426$^{\mathrm{G}}$ & $[1,3]$ & $[2,3]$ & 13n3884$^{\mathrm{G}}$ & $[1,3]$ & $[2,3]$ \\
13n427$^{\mathrm{G}}$ & $[1,3]$ & $[2,3]$ & 13n3902 & $[1,3]$ & $[2,3]$ \\
13n452$^{\mathrm{G}}$ & $[1,3]$ & $[2,3]$ & 13n3923 & $[1,3]$ & $[2,3]$ \\
13n454$^{\mathrm{G}}$ & $[1,3]$ & $[2,3]$ & 13n3944 & $[1,3]$ & $[2,3]$ \\
13n460$^{\mathrm{G}}$ & $[1,3]$ & $[2,3]$ & 13n3949$^{\mathrm{G}}$ & $[1,3]$ & $[2,3]$ \\
13n465$^{\mathrm{G}}$ & $[1,3]$ & $[2,3]$ & 13n4067$^{\mathrm{G}}$ & $[1,3]$ & $[2,3]$ \\
13n474$^{\mathrm{G}}$ & $[1,3]$ & $[2,3]$ & 13n4074$^{\mathrm{G}}$ & $[1,3]$ & $[2,3]$ \\
13n477$^{\mathrm{G}}$ & $[1,3]$ & $[2,3]$ & 13n4096$^{\mathrm{G}}$ & $[1,3]$ & $[2,3]$ \\
13n480$^{\mathrm{G}}$ & $[1,3]$ & $[2,3]$ & 13n4117$^{\mathrm{G}}$ & $[1,3]$ & $[2,3]$ \\
13n488$^{\mathrm{G}}$ & $[1,3]$ & $[2,3]$ & 13n4162$^{\mathrm{G}}$ & $[1,3]$ & $[2,3]$ \\
13n494 & $[1,3]$ & $[2,3]$ & 13n4181$^{\mathrm{G}}$ & $[1,3]$ & $[2,3]$ \\
13n538$^{\mathrm{G}}$ & $[1,3]$ & $[2,3]$ & 13n4226 & $[1,3]$ & $[2,3]$ \\
13n556$^{\mathrm{G}}$ & $[1,3]$ & $[2,3]$ & 13n4231$^{\mathrm{G}}$ & $[1,3]$ & $[2,3]$ \\
13n606$^{\mathrm{G}}$ & $[1,3]$ & $[2,3]$ & 13n4233$^{\mathrm{G}}$ & $[1,3]$ & $[2,3]$ \\
13n607$^{\mathrm{G}}$ & $[1,3]$ & $[2,3]$ & 13n4262 & $[1,3]$ & $[2,3]$ \\
13n608$^{\mathrm{G}}$ & $[1,3]$ & $[2,3]$ & 13n4309$^{\mathrm{G}}$ & $[1,3]$ & $[2,3]$ \\
13n618$^{\mathrm{G}}$ & $[1,3]$ & $[2,3]$ & 13n4343 & $[1,3]$ & $[2,3]$ \\
13n646$^{\mathrm{G}}$ & $[1,3]$ & $[2,3]$ & 13n4345$^{\mathrm{G}}$ & $[1,3]$ & $[2,3]$ \\
13n683$^{\mathrm{G}}$ & $[1,3]$ & $[2,3]$ & 13n4349$^{\mathrm{G}}$ & $[1,3]$ & $[2,3]$ \\
13n689$^{\mathrm{t}}$ & $[1,3]$ & $[2,3]$ & 13n4354$^{\mathrm{G}}$ & $[1,3]$ & $[2,3]$ \\
13n714 & $[1,3]$ & $[2,3]$ & 13n4366 & $[1,3]$ & $[2,3]$ \\
13n720$^{\mathrm{G}}$ & $[1,3]$ & $[2,3]$ & 13n4367$^{\mathrm{G}}$ & $[1,3]$ & $[2,3]$ \\
13n732$^{\mathrm{G}}$ & $[1,3]$ & $[2,3]$ & 13n4371 & $[1,3]$ & $[2,3]$ \\
13n733$^{\mathrm{G}}$ & $[1,3]$ & $[2,3]$ & 13n4461$^{\mathrm{G}}$ & $[1,3]$ & $[2,3]$ \\
13n750 & $[1,3]$ & $[2,3]$ & 13n4489$^{\mathrm{G}}$ & $[1,3]$ & $[2,3]$ \\
13n754$^{\mathrm{G}}$ & $[1,3]$ & $[2,3]$ & 13n4499$^{\mathrm{G}}$ & $[1,3]$ & $[2,3]$ \\
13n769$^{\mathrm{G}}$ & $[1,3]$ & $[2,3]$ & 13n4515$^{\mathrm{G}}$ & $[1,3]$ & $[2,3]$ \\
13n772$^{\mathrm{G}}$ & $[1,3]$ & $[2,3]$ & 13n4531 & $[1,3]$ & $[2,3]$ \\
13n788$^{\mathrm{G}}$ & $[1,3]$ & $[2,3]$ & 13n4541$^{\mathrm{G}}$ & $[1,3]$ & $[2,3]$ \\
13n798$^{\mathrm{G}}$ & $[1,3]$ & $[2,3]$ & 13n4552 & $[1,3]$ & $[2,3]$ \\
13n819$^{\mathrm{G}}$ & $[1,3]$ & $[2,3]$ & 13n4588 & $[1,4]$ & $[2,4]$ \\
13n837$^{\mathrm{G}}$ & $[1,3]$ & $[2,3]$ & 13n4626$^{\mathrm{G}}$ & $[1,3]$ & $[2,3]$ \\
13n861$^{\mathrm{G}}$ & $[1,3]$ & $[2,3]$ & 13n4645$^{\mathrm{G}}$ & $[1,3]$ & $[2,3]$ \\
13n869$^{\mathrm{G}}$ & $[1,3]$ & $[2,3]$ & 13n4646 & $[1,3]$ & $[2,3]$ \\
13n879$^{\mathrm{G}}$ & $[1,3]$ & $[2,3]$ & 13n4650$^{\mathrm{G}}$ & $[1,3]$ & $[2,3]$ \\
13n892$^{\mathrm{G}}$ & $[1,3]$ & $[2,3]$ & 13n4667 & $[1,3]$ & $[2,3]$ \\
13n923$^{\mathrm{G}}$ & $[1,3]$ & $[2,3]$ & 13n4674 & $[1,3]$ & $[2,3]$ \\
13n932 & $[1,3]$ & $[2,3]$ & 13n4698 & $[1,3]$ & $[2,3]$ \\
13n938$^{\mathrm{G}}$ & $[1,3]$ & $[2,3]$ & 13n4715 & $[1,3]$ & $[2,3]$ \\
13n980$^{\mathrm{G}}$ & $[1,4]$ & $[2,4]$ & 13n4717 & $[1,3]$ & $[2,3]$ \\
13n984$^{\mathrm{G}}$ & $[1,3]$ & $[2,3]$ & 13n4762$^{\mathrm{G}}$ & $[1,3]$ & $[2,3]$ \\
13n989 & $[1,3]$ & $[2,3]$ & 13n4771 & $[1,3]$ & $[2,3]$ \\
13n994 & $[1,3]$ & $[2,3]$ & 13n4795$^{\mathrm{G}}$ & $[2,4]$ & $[3,4]$ \\
13n1004 & $[1,3]$ & $[2,3]$ & 13n4831 & $[1,3]$ & $[2,3]$ \\
13n1009$^{\mathrm{G}}$ & $[1,3]$ & $[2,3]$ & 13n4834$^{\mathrm{G}}$ & $[1,3]$ & $[2,3]$ \\
13n1014$^{\mathrm{G}}$ & $[1,3]$ & $[2,3]$ & 13n4870 & $[1,3]$ & $[2,3]$ \\
13n1020$^{\mathrm{G}}$ & $[1,3]$ & $[2,3]$ & 13n4885$^{\mathrm{G}}$ & $[1,3]$ & $[2,3]$ \\
13n1050$^{\mathrm{G}}$ & $[1,3]$ & $[2,3]$ & 13n4887$^{\mathrm{G}}$ & $[1,3]$ & $[2,3]$ \\
13n1055$^{\mathrm{G}}$ & $[1,3]$ & $[2,3]$ & 13n4891 & $[1,3]$ & $[2,3]$ \\
13n1061 & $[1,3]$ & $[2,3]$ & 13n4892 & $[1,3]$ & $[2,3]$ \\
13n1089$^{\mathrm{G}}$ & $[1,3]$ & $[2,3]$ & 13n4896$^{\mathrm{G}}$ & $[1,3]$ & $[2,3]$ \\
13n1103$^{\mathrm{G}}$ & $[1,4]$ & $[2,4]$ & 13n4904$^{\mathrm{G}}$ & $[1,3]$ & $[2,3]$ \\
13n1111$^{\mathrm{G}}$ & $[1,3]$ & $[2,3]$ & 13n4905$^{\mathrm{G}}$ & $[1,3]$ & $[2,3]$ \\
13n1126$^{\mathrm{G}}$ & $[1,3]$ & $[2,3]$ & 13n4928$^{\mathrm{G}}$ & $[1,3]$ & $[2,3]$ \\
13n1194$^{\mathrm{G}}$ & $[1,3]$ & $[2,3]$ & 13n4929$^{\mathrm{G}}$ & $[2,4]$ & $[3,4]$ \\
13n1198$^{\mathrm{G}}$ & $[1,3]$ & $[2,3]$ & 13n4932$^{\mathrm{c}}$ & $[1,3]$ & $[2,3]$ \\
13n1203$^{\mathrm{G}}$ & $[1,3]$ & $[2,3]$ & 13n4939 & $[1,3]$ & $[2,3]$ \\
13n1228 & $[1,4]$ & $[2,4]$ & 13n4981$^{\mathrm{G}}$ & $[1,3]$ & $[2,3]$ \\
13n1233$^{\mathrm{G}}$ & $[1,3]$ & $[2,3]$ & 13n4984$^{\mathrm{G}}$ & $[1,3]$ & $[2,3]$ \\
13n1268$^{\mathrm{G}}$ & $[1,3]$ & $[2,3]$ & 13n4997$^{\mathrm{G}}$ & $[1,3]$ & $[2,3]$ \\
13n1272$^{\mathrm{G}}$ & $[1,3]$ & $[2,3]$ & 13n5009 & $[1,3]$ & $[2,3]$ \\
13n1277$^{\mathrm{G}}$ & $[1,3]$ & $[2,3]$ & 13n5032 & $[1,3]$ & $[2,3]$ \\
13n1284$^{\mathrm{G}}$ & $[1,3]$ & $[2,3]$ & 13n5036 & $[1,3]$ & $[2,3]$ \\
13n1290$^{\mathrm{G}}$ & $[1,3]$ & $[2,3]$ & 13n5062 & $[1,3]$ & $[2,3]$ \\
13n1311$^{\mathrm{G}}$ & $[1,3]$ & $[2,3]$ & 13n5069$^{\mathrm{G}}$ & $[1,3]$ & $[2,3]$ \\
13n1328$^{\mathrm{G}}$ & $[1,3]$ & $[2,3]$ & 13n5085 & $[1,3]$ & $[2,3]$ \\
13n1333$^{\mathrm{G}}$ & $[1,3]$ & $[2,3]$ & 13n5089$^{\mathrm{G}}$ & $[1,3]$ & $[2,3]$ \\*
13n1346$^{\mathrm{G}}$ & $[1,3]$ & $[2,3]$ & 13n5097 & $[1,3]$ & $[2,3]$ \\*
13n1361$^{\mathrm{G}}$ & $[1,3]$ & $[2,3]$ & 13n5105$^{\mathrm{G}}$ & $[2,4]$ & $[3,4]$ \\*
\multicolumn{6}{@{}l@{}}{\usebox{\appendixLegendBox}} \\
\end{longtable}}

\section{Alternating knots with range \texorpdfstring{$[2,3]$}{[2,3]} and no candidate crossing}
\label{app:dichotomy}
The following \nDichotomy{} alternating knots retain range $[2,3]$. The analysis
in Section~\ref{sec:minimal} verifies that no crossing change in any minimal
diagram gives a knot of unknotting number one. This rules out the strong
Bernhard--Jablan equality if $u=2$, but does not determine their unknotting
numbers. These entries are separate from the exact values in
Appendices~\ref{app:u5}--\ref{app:u2} and are ordered down the columns.

{\footnotesize\setlength{\tabcolsep}{5pt}
\setlength{\LTleft}{\fill}
\setlength{\LTright}{\fill}
\sbox{\appendixLegendBox}{%
\begin{minipage}{\textwidth}
\hrule height\heavyrulewidth\vspace{3pt}
\raggedright\setlength{\parindent}{0pt}\setlength{\parskip}{0pt}
\strut No crossing change in a minimal diagram gives a knot of unknotting number one (Section~\ref{sec:minimal}).\strut\par
\end{minipage}}
\begin{longtable}{*{6}{>{\raggedright\arraybackslash}p{\dimexpr\textwidth/6-2\tabcolsep\relax}}}
\toprule
\endfirsthead
\multicolumn{6}{@{}l@{}}{\emph{Knots without a candidate crossing, continued from the previous page.}} \\[2pt]
\toprule
\endhead
\bottomrule
\multicolumn{6}{@{}r@{}}{\emph{continued on the next page}} \\
\endfoot
\endlastfoot
$10_{6}$ & 12a1050 & 13a564 & 13a1541 & 13a2533 & 13a3560 \\
$10_{11}$ & 12a1066 & 13a569 & 13a1542 & 13a2535 & 13a3561 \\
$10_{47}$ & 12a1080 & 13a573 & 13a1543 & 13a2541 & 13a3562 \\
$10_{51}$ & 12a1084 & 13a574 & 13a1548 & 13a2545 & 13a3564 \\
$10_{54}$ & 12a1089 & 13a578 & 13a1553 & 13a2550 & 13a3567 \\
$10_{61}$ & 12a1099 & 13a579 & 13a1559 & 13a2557 & 13a3568 \\
$10_{76}$ & 12a1103 & 13a585 & 13a1560 & 13a2559 & 13a3577 \\
$10_{77}$ & 12a1107 & 13a587 & 13a1561 & 13a2560 & 13a3581 \\
$10_{79}$ & 12a1109 & 13a588 & 13a1567 & 13a2568 & 13a3593 \\
$10_{100}$ & 12a1118 & 13a593 & 13a1569 & 13a2573 & 13a3596 \\
11a14 & 12a1124 & 13a594 & 13a1575 & 13a2574 & 13a3597 \\
11a18 & 12a1127 & 13a596 & 13a1581 & 13a2591 & 13a3605 \\
11a20 & 12a1147 & 13a597 & 13a1583 & 13a2594 & 13a3608 \\
11a45 & 12a1148 & 13a598 & 13a1584 & 13a2595 & 13a3609 \\
11a49 & 12a1151 & 13a601 & 13a1586 & 13a2600 & 13a3613 \\
11a53 & 12a1159 & 13a608 & 13a1589 & 13a2610 & 13a3653 \\
11a60 & 12a1160 & 13a613 & 13a1601 & 13a2613 & 13a3667 \\
11a83 & 12a1162 & 13a617 & 13a1603 & 13a2614 & 13a3689 \\
11a105 & 12a1163 & 13a618 & 13a1608 & 13a2616 & 13a3690 \\
11a126 & 12a1165 & 13a619 & 13a1609 & 13a2618 & 13a3692 \\
11a137 & 12a1184 & 13a623 & 13a1619 & 13a2622 & 13a3694 \\
11a161 & 12a1202 & 13a624 & 13a1630 & 13a2623 & 13a3701 \\
11a197 & 12a1205 & 13a629 & 13a1639 & 13a2628 & 13a3706 \\
11a202 & 12a1210 & 13a630 & 13a1640 & 13a2632 & 13a3727 \\
11a293 & 12a1223 & 13a631 & 13a1645 & 13a2644 & 13a3736 \\
11a304 & 12a1240 & 13a633 & 13a1656 & 13a2663 & 13a3777 \\
11a346 & 12a1243 & 13a636 & 13a1662 & 13a2695 & 13a3779 \\
11a362 & 12a1247 & 13a638 & 13a1665 & 13a2709 & 13a3782 \\
11a363 & 12a1254 & 13a639 & 13a1675 & 13a2710 & 13a3789 \\
12a41 & 12a1255 & 13a641 & 13a1676 & 13a2711 & 13a3808 \\
12a44 & 12a1256 & 13a655 & 13a1677 & 13a2728 & 13a3813 \\
12a49 & 12a1266 & 13a656 & 13a1683 & 13a2748 & 13a3820 \\
12a50 & 12a1278 & 13a658 & 13a1688 & 13a2753 & 13a3822 \\
12a64 & 12a1279 & 13a683 & 13a1692 & 13a2772 & 13a3823 \\
12a85 & 12a1281 & 13a701 & 13a1699 & 13a2779 & 13a3830 \\
12a86 & 12a1282 & 13a702 & 13a1700 & 13a2780 & 13a3833 \\
12a89 & 12a1285 & 13a721 & 13a1703 & 13a2794 & 13a3845 \\
12a103 & 12a1286 & 13a735 & 13a1712 & 13a2795 & 13a3864 \\
12a104 & 12a1287 & 13a743 & 13a1717 & 13a2799 & 13a3867 \\
12a114 & 12a1288 & 13a746 & 13a1719 & 13a2807 & 13a3869 \\
12a117 & 13a4 & 13a798 & 13a1725 & 13a2813 & 13a3871 \\
12a125 & 13a12 & 13a800 & 13a1741 & 13a2814 & 13a3872 \\
12a127 & 13a17 & 13a811 & 13a1744 & 13a2817 & 13a3874 \\
12a153 & 13a19 & 13a813 & 13a1757 & 13a2822 & 13a3914 \\
12a161 & 13a25 & 13a819 & 13a1764 & 13a2837 & 13a3922 \\
12a168 & 13a34 & 13a826 & 13a1772 & 13a2841 & 13a3923 \\
12a178 & 13a35 & 13a828 & 13a1788 & 13a2850 & 13a3935 \\
12a181 & 13a40 & 13a835 & 13a1790 & 13a2872 & 13a3943 \\
12a183 & 13a51 & 13a861 & 13a1804 & 13a2882 & 13a3945 \\
12a186 & 13a56 & 13a862 & 13a1811 & 13a2885 & 13a3950 \\
12a193 & 13a71 & 13a863 & 13a1820 & 13a2886 & 13a3976 \\
12a195 & 13a78 & 13a895 & 13a1829 & 13a2894 & 13a3980 \\
12a196 & 13a96 & 13a908 & 13a1865 & 13a2897 & 13a3987 \\
12a199 & 13a106 & 13a910 & 13a1867 & 13a2898 & 13a3988 \\
12a211 & 13a112 & 13a922 & 13a1868 & 13a2907 & 13a4016 \\
12a212 & 13a113 & 13a929 & 13a1887 & 13a2910 & 13a4025 \\
12a238 & 13a116 & 13a953 & 13a1921 & 13a2917 & 13a4030 \\
12a240 & 13a119 & 13a959 & 13a1927 & 13a2927 & 13a4033 \\
12a244 & 13a121 & 13a960 & 13a1951 & 13a2931 & 13a4055 \\
12a294 & 13a123 & 13a965 & 13a1952 & 13a2933 & 13a4056 \\
12a297 & 13a124 & 13a971 & 13a1953 & 13a2937 & 13a4070 \\
12a302 & 13a126 & 13a975 & 13a1955 & 13a2951 & 13a4072 \\
12a304 & 13a129 & 13a976 & 13a1957 & 13a2981 & 13a4096 \\
12a315 & 13a130 & 13a984 & 13a1958 & 13a2984 & 13a4097 \\
12a329 & 13a134 & 13a989 & 13a1959 & 13a2988 & 13a4098 \\
12a350 & 13a136 & 13a991 & 13a1965 & 13a2991 & 13a4102 \\
12a370 & 13a140 & 13a995 & 13a1966 & 13a2996 & 13a4108 \\
12a371 & 13a147 & 13a996 & 13a1968 & 13a3010 & 13a4114 \\
12a372 & 13a148 & 13a997 & 13a1969 & 13a3019 & 13a4120 \\
12a375 & 13a151 & 13a998 & 13a1985 & 13a3021 & 13a4123 \\
12a376 & 13a152 & 13a1002 & 13a1986 & 13a3029 & 13a4126 \\
12a379 & 13a156 & 13a1003 & 13a1993 & 13a3030 & 13a4127 \\
12a380 & 13a158 & 13a1005 & 13a1998 & 13a3034 & 13a4134 \\
12a381 & 13a159 & 13a1006 & 13a2005 & 13a3038 & 13a4153 \\
12a382 & 13a163 & 13a1030 & 13a2015 & 13a3039 & 13a4154 \\
12a414 & 13a175 & 13a1034 & 13a2023 & 13a3046 & 13a4155 \\
12a423 & 13a177 & 13a1036 & 13a2024 & 13a3054 & 13a4166 \\
12a424 & 13a179 & 13a1060 & 13a2027 & 13a3073 & 13a4188 \\
12a434 & 13a189 & 13a1077 & 13a2030 & 13a3090 & 13a4193 \\
12a436 & 13a194 & 13a1080 & 13a2031 & 13a3109 & 13a4202 \\
12a449 & 13a197 & 13a1090 & 13a2042 & 13a3118 & 13a4203 \\
12a454 & 13a202 & 13a1102 & 13a2043 & 13a3131 & 13a4208 \\
12a461 & 13a213 & 13a1112 & 13a2047 & 13a3146 & 13a4230 \\
12a462 & 13a225 & 13a1122 & 13a2063 & 13a3147 & 13a4231 \\
12a477 & 13a232 & 13a1139 & 13a2069 & 13a3149 & 13a4234 \\
12a481 & 13a234 & 13a1148 & 13a2070 & 13a3156 & 13a4240 \\
12a482 & 13a237 & 13a1149 & 13a2079 & 13a3158 & 13a4241 \\
12a493 & 13a250 & 13a1150 & 13a2087 & 13a3159 & 13a4275 \\
12a496 & 13a255 & 13a1153 & 13a2090 & 13a3161 & 13a4278 \\
12a508 & 13a262 & 13a1156 & 13a2099 & 13a3166 & 13a4281 \\
12a533 & 13a266 & 13a1159 & 13a2100 & 13a3167 & 13a4290 \\
12a544 & 13a267 & 13a1165 & 13a2103 & 13a3168 & 13a4310 \\
12a545 & 13a269 & 13a1166 & 13a2104 & 13a3169 & 13a4336 \\
12a553 & 13a273 & 13a1169 & 13a2110 & 13a3170 & 13a4337 \\
12a556 & 13a277 & 13a1175 & 13a2114 & 13a3172 & 13a4338 \\
12a568 & 13a284 & 13a1185 & 13a2119 & 13a3174 & 13a4341 \\
12a580 & 13a285 & 13a1197 & 13a2132 & 13a3175 & 13a4345 \\
12a592 & 13a290 & 13a1198 & 13a2134 & 13a3177 & 13a4351 \\
12a597 & 13a295 & 13a1214 & 13a2142 & 13a3179 & 13a4354 \\
12a600 & 13a302 & 13a1216 & 13a2152 & 13a3182 & 13a4357 \\
12a616 & 13a312 & 13a1224 & 13a2153 & 13a3184 & 13a4360 \\
12a636 & 13a315 & 13a1226 & 13a2156 & 13a3187 & 13a4370 \\
12a639 & 13a319 & 13a1229 & 13a2158 & 13a3188 & 13a4374 \\
12a641 & 13a326 & 13a1233 & 13a2159 & 13a3189 & 13a4389 \\
12a643 & 13a327 & 13a1239 & 13a2173 & 13a3190 & 13a4401 \\
12a649 & 13a333 & 13a1246 & 13a2176 & 13a3191 & 13a4403 \\
12a651 & 13a336 & 13a1247 & 13a2181 & 13a3192 & 13a4407 \\
12a665 & 13a338 & 13a1255 & 13a2191 & 13a3198 & 13a4414 \\
12a668 & 13a351 & 13a1258 & 13a2193 & 13a3205 & 13a4415 \\
12a669 & 13a353 & 13a1259 & 13a2194 & 13a3217 & 13a4421 \\
12a680 & 13a359 & 13a1267 & 13a2195 & 13a3220 & 13a4449 \\
12a681 & 13a360 & 13a1274 & 13a2203 & 13a3222 & 13a4451 \\
12a684 & 13a365 & 13a1275 & 13a2208 & 13a3225 & 13a4468 \\
12a686 & 13a370 & 13a1287 & 13a2210 & 13a3229 & 13a4478 \\
12a689 & 13a371 & 13a1303 & 13a2228 & 13a3232 & 13a4483 \\
12a693 & 13a374 & 13a1304 & 13a2238 & 13a3233 & 13a4498 \\
12a702 & 13a377 & 13a1309 & 13a2246 & 13a3241 & 13a4499 \\
12a706 & 13a380 & 13a1311 & 13a2263 & 13a3251 & 13a4502 \\
12a730 & 13a382 & 13a1332 & 13a2264 & 13a3255 & 13a4521 \\
12a741 & 13a384 & 13a1344 & 13a2275 & 13a3256 & 13a4522 \\
12a767 & 13a387 & 13a1361 & 13a2277 & 13a3257 & 13a4537 \\
12a784 & 13a393 & 13a1363 & 13a2284 & 13a3259 & 13a4589 \\
12a789 & 13a399 & 13a1372 & 13a2301 & 13a3260 & 13a4590 \\
12a791 & 13a412 & 13a1377 & 13a2326 & 13a3265 & 13a4591 \\
12a824 & 13a413 & 13a1378 & 13a2327 & 13a3269 & 13a4593 \\
12a825 & 13a414 & 13a1379 & 13a2328 & 13a3270 & 13a4599 \\
12a826 & 13a418 & 13a1383 & 13a2336 & 13a3274 & 13a4602 \\
12a827 & 13a420 & 13a1389 & 13a2353 & 13a3277 & 13a4608 \\
12a835 & 13a421 & 13a1392 & 13a2356 & 13a3297 & 13a4609 \\
12a841 & 13a423 & 13a1393 & 13a2366 & 13a3299 & 13a4617 \\
12a845 & 13a426 & 13a1401 & 13a2367 & 13a3302 & 13a4626 \\
12a862 & 13a427 & 13a1404 & 13a2376 & 13a3308 & 13a4631 \\
12a869 & 13a440 & 13a1406 & 13a2381 & 13a3325 & 13a4633 \\
12a878 & 13a448 & 13a1407 & 13a2386 & 13a3333 & 13a4636 \\
12a879 & 13a449 & 13a1408 & 13a2391 & 13a3343 & 13a4645 \\
12a881 & 13a457 & 13a1409 & 13a2395 & 13a3364 & 13a4646 \\
12a886 & 13a459 & 13a1410 & 13a2396 & 13a3371 & 13a4647 \\
12a896 & 13a468 & 13a1414 & 13a2400 & 13a3373 & 13a4648 \\
12a898 & 13a474 & 13a1432 & 13a2405 & 13a3376 & 13a4650 \\
12a899 & 13a476 & 13a1437 & 13a2409 & 13a3407 & 13a4651 \\
12a901 & 13a480 & 13a1438 & 13a2417 & 13a3408 & 13a4654 \\
12a911 & 13a481 & 13a1439 & 13a2421 & 13a3421 & 13a4656 \\
12a912 & 13a486 & 13a1442 & 13a2422 & 13a3444 & 13a4660 \\
12a916 & 13a493 & 13a1452 & 13a2432 & 13a3451 & 13a4663 \\
12a940 & 13a496 & 13a1453 & 13a2436 & 13a3452 & 13a4699 \\
12a947 & 13a523 & 13a1461 & 13a2438 & 13a3453 & 13a4703 \\
12a957 & 13a528 & 13a1462 & 13a2441 & 13a3464 & 13a4704 \\
12a981 & 13a531 & 13a1476 & 13a2442 & 13a3476 & 13a4705 \\
12a985 & 13a535 & 13a1496 & 13a2445 & 13a3482 & 13a4726 \\
12a988 & 13a536 & 13a1501 & 13a2447 & 13a3495 & 13a4733 \\
12a989 & 13a537 & 13a1503 & 13a2449 & 13a3496 & 13a4749 \\
12a999 & 13a544 & 13a1504 & 13a2451 & 13a3504 & 13a4758 \\
12a1000 & 13a545 & 13a1506 & 13a2453 & 13a3511 & 13a4766 \\
12a1009 & 13a547 & 13a1510 & 13a2464 & 13a3512 & 13a4782 \\
12a1016 & 13a549 & 13a1528 & 13a2475 & 13a3515 & 13a4792 \\
12a1017 & 13a551 & 13a1529 & 13a2485 & 13a3529 & 13a4811 \\
12a1028 & 13a552 & 13a1532 & 13a2488 & 13a3536 & 13a4817 \\
12a1031 & 13a553 & 13a1534 & 13a2493 & 13a3549 & 13a4843 \\*
12a1039 & 13a556 & 13a1538 & 13a2507 & 13a3551 & 13a4856 \\*
12a1040 & 13a557 & 13a1539 & 13a2512 & 13a3553 &  \\*
\multicolumn{6}{@{}l@{}}{\usebox{\appendixLegendBox}} \\
\end{longtable}}

\section{Marked crossing-change diagrams}\label{app:diagrams}
The following PD codes specify the nineteen diagrams of
Section~\ref{sec:eight}. Changing the crossings represented by bold
four-tuples gives the knot in the third column, up to mirror image. This
is the unknot, denoted $0_1$, except for \kn{13n}{447}, where the single
change gives $10_{91}$ and $u(10_{91})=1$. All other crossings are unchanged.

{\small
\setlength{\tabcolsep}{4pt}
\setlength{\LTleft}{\fill}
\setlength{\LTright}{\fill}
\renewcommand{\arraystretch}{1.25}
\begin{longtable}{@{}>{\raggedright\arraybackslash}p{1.45cm}>{\raggedleft\arraybackslash}p{0.8cm}
>{\centering\arraybackslash}p{1.05cm}
>{\raggedright\arraybackslash\scriptsize}p{\dimexpr\textwidth-3.3cm-6\tabcolsep\relax}@{}}
\caption{PD codes and marked crossings for the upper-bound constructions.}\label{tab:pdcodes}\\
\toprule
Knot & $c(D)$ & Target & Planar diagram (PD) code\\
\midrule
\endfirsthead
\multicolumn{4}{@{}l@{}}{\emph{Marked crossing-change diagrams, continued.}}\\[2pt]
\toprule
Knot & $c(D)$ & Target & Planar diagram (PD) code\\
\midrule
\endhead
\bottomrule
\multicolumn{4}{@{}r@{}}{\emph{continued on the next page}}\\
\endfoot
\bottomrule
\endlastfoot
\kn{13n}{30} & 14 & $0_1$ & $[[1,9,2,8],\allowbreak [18,7,19,8],\allowbreak \mathbf{[6,21,7,22]},\allowbreak [26,5,27,6],\allowbreak [4,25,5,26],\allowbreak [24,3,25,4],\allowbreak [2,15,3,16],\allowbreak [13,28,14,1],\allowbreak \mathbf{[27,10,28,11]},\allowbreak [16,24,17,23],\allowbreak [22,18,23,17],\allowbreak [20,12,21,11],\allowbreak [12,20,13,19],\allowbreak [9,14,10,15]]$ \\[3pt]
\kn{13n}{45} & 14 & $0_1$ & $[[1,7,2,6],\allowbreak [10,6,11,5],\allowbreak \mathbf{[4,21,5,22]},\allowbreak [26,3,27,4],\allowbreak [2,16,3,15],\allowbreak [7,1,8,28],\allowbreak \mathbf{[27,16,28,17]},\allowbreak [14,25,15,26],\allowbreak [24,13,25,14],\allowbreak [12,23,13,24],\allowbreak [22,11,23,12],\allowbreak [20,18,21,17],\allowbreak [8,19,9,20],\allowbreak [18,9,19,10]]$ \\[3pt]
\kn{13n}{80} & 14 & $0_1$ & $[[1,24,2,25],\allowbreak [23,2,24,3],\allowbreak \mathbf{[15,23,16,22]},\allowbreak [8,22,9,21],\allowbreak [20,14,21,13],\allowbreak [12,20,13,19],\allowbreak [18,10,19,9],\allowbreak [17,5,18,4],\allowbreak [3,17,4,16],\allowbreak [14,7,15,8],\allowbreak [11,26,12,27],\allowbreak [27,10,28,11],\allowbreak [25,7,26,6],\allowbreak \mathbf{[5,1,6,28]}]$ \\[3pt]
\kn{13n}{221} & 13 & $0_1$ & $[[22,16,17,21],\allowbreak [16,3,4,17],\allowbreak [5,20,21,4],\allowbreak \mathbf{[1,2,11,7]},\allowbreak [3,14,15,2],\allowbreak \mathbf{[10,11,15,12]},\allowbreak [18,19,9,10],\allowbreak [6,19,20,5],\allowbreak [13,23,18,12],\allowbreak [14,22,23,13],\allowbreak [7,8,24,25],\allowbreak [6,1,25,26],\allowbreak [8,9,26,24]]$ \\[3pt]
\kn{13n}{436} & 13 & $0_1$ & $[[25,26,21,22],\allowbreak [24,25,2,3],\allowbreak [23,24,3,4],\allowbreak \mathbf{[26,23,10,11]},\allowbreak \mathbf{[1,2,14,15]},\allowbreak [18,7,8,17],\allowbreak [19,6,7,18],\allowbreak [16,8,9,15],\allowbreak [12,19,20,11],\allowbreak [22,21,13,14],\allowbreak [13,20,17,16],\allowbreak [10,5,6,12],\allowbreak [4,1,9,5]]$ \\[3pt]
\kn{13n}{447} & 13 & $10_{91}$ & $[[22,20,6,7],\allowbreak [24,25,21,22],\allowbreak [25,26,20,21],\allowbreak [24,10,11,23],\allowbreak [16,11,12,15],\allowbreak [15,12,13,14],\allowbreak [8,9,19,14],\allowbreak [7,8,13,10],\allowbreak \mathbf{[9,6,5,1]},\allowbreak [2,18,19,1],\allowbreak [3,17,18,2],\allowbreak [17,3,4,16],\allowbreak [23,4,5,26]]$ \\[3pt]
\kn{13n}{636} & 13 & $0_1$ & $[[4,23,20,3],\allowbreak [6,11,12,9],\allowbreak [7,10,11,6],\allowbreak \mathbf{[3,7,8,5]},\allowbreak \mathbf{[21,13,10,20]},\allowbreak [12,13,14,15],\allowbreak [4,5,1,2],\allowbreak [2,1,19,18],\allowbreak [8,9,15,16],\allowbreak [18,19,16,17],\allowbreak [22,23,26,24],\allowbreak [17,14,25,26],\allowbreak [21,22,24,25]]$ \\[3pt]
\kn{13n}{1439} & 13 & $0_1$ & $[[9,10,7,8],\allowbreak [21,22,6,7],\allowbreak [8,6,22,23],\allowbreak [12,9,23,24],\allowbreak [4,24,25,5],\allowbreak [5,1,2,4],\allowbreak \mathbf{[11,12,2,3]},\allowbreak [1,14,15,3],\allowbreak \mathbf{[26,13,14,25]},\allowbreak [13,17,16,15],\allowbreak [17,19,18,16],\allowbreak [10,11,20,21],\allowbreak [19,26,20,18]]$ \\[3pt]
\kn{13n}{2251} & 13 & $0_1$ & $[[21,24,25,20],\allowbreak [19,14,15,18],\allowbreak [14,1,2,13],\allowbreak [20,25,26,19],\allowbreak [15,16,10,11],\allowbreak [17,18,11,12],\allowbreak \mathbf{[17,3,4,21]},\allowbreak [13,8,9,16],\allowbreak \mathbf{[12,10,6,7]},\allowbreak [8,5,6,9],\allowbreak [3,7,5,2],\allowbreak [1,22,23,4],\allowbreak [22,26,24,23]]$ \\[3pt]
\kn{13n}{2379} & 13 & $0_1$ & $[[1,9,2,8],\allowbreak [18,8,19,7],\allowbreak [6,20,7,19],\allowbreak [20,6,21,5],\allowbreak [4,13,5,14],\allowbreak [14,3,15,4],\allowbreak \mathbf{[2,22,3,21]},\allowbreak [9,1,10,26],\allowbreak [25,11,26,10],\allowbreak [11,25,12,24],\allowbreak [16,24,17,23],\allowbreak [22,16,23,15],\allowbreak \mathbf{[12,18,13,17]}]$ \\[3pt]
\kn{13n}{2639} & 13 & $0_1$ & $[[3,9,10,2],\allowbreak [9,24,25,10],\allowbreak [18,12,13,17],\allowbreak [7,8,13,14],\allowbreak [19,11,12,18],\allowbreak [1,5,4,3],\allowbreak [20,16,11,19],\allowbreak [21,15,16,20],\allowbreak \mathbf{[2,14,15,1]},\allowbreak [5,21,22,4],\allowbreak \mathbf{[24,22,17,23]},\allowbreak [25,26,6,7],\allowbreak [8,6,26,23]]$ \\[3pt]
\kn{13n}{2809} & 13 & $0_1$ & $[\mathbf{[1,6,2,7]},\allowbreak \mathbf{[2,23,3,24]},\allowbreak [14,4,15,3],\allowbreak [4,20,5,19],\allowbreak [10,5,11,6],\allowbreak [7,16,8,17],\allowbreak [17,8,18,9],\allowbreak [9,26,10,1],\allowbreak [22,12,23,11],\allowbreak [12,22,13,21],\allowbreak [20,14,21,13],\allowbreak [24,15,25,16],\allowbreak [18,25,19,26]]$ \\[3pt]
\kn{13n}{2907} & 13 & $0_1$ & $[[1,15,2,14],\allowbreak [13,3,14,2],\allowbreak [12,10,13,9],\allowbreak [20,11,21,12],\allowbreak [10,21,11,22],\allowbreak \mathbf{[8,18,9,17]},\allowbreak [24,8,25,7],\allowbreak [6,24,7,23],\allowbreak [18,6,19,5],\allowbreak [4,20,5,19],\allowbreak \mathbf{[22,4,23,3]},\allowbreak [15,1,16,26],\allowbreak [25,17,26,16]]$ \\[3pt]
\kn{13n}{3033} & 13 & $0_1$ & $[[1,13,2,12],\allowbreak [11,21,12,20],\allowbreak [19,11,20,10],\allowbreak [9,3,10,2],\allowbreak \mathbf{[15,8,16,9]},\allowbreak [24,7,25,8],\allowbreak [6,25,7,26],\allowbreak [5,18,6,19],\allowbreak [17,4,18,5],\allowbreak [3,16,4,17],\allowbreak \mathbf{[21,26,22,1]},\allowbreak [23,14,24,15],\allowbreak [13,22,14,23]]$ \\[3pt]
\kn{13n}{3108} & 13 & $0_1$ & $[[14,13,15,16],\allowbreak [13,14,1,2],\allowbreak \mathbf{[6,12,8,5]},\allowbreak [3,1,11,12],\allowbreak [2,3,6,7],\allowbreak [24,25,17,18],\allowbreak \mathbf{[10,11,16,17]},\allowbreak [18,15,21,19],\allowbreak [7,4,20,21],\allowbreak [9,4,5,8],\allowbreak [19,20,23,22],\allowbreak [9,10,25,26],\allowbreak [22,23,26,24]]$ \\[3pt]
\kn{13n}{3589} & 13 & $0_1$ & $[[1,15,2,14],\allowbreak [26,22,1,21],\allowbreak \mathbf{[6,25,7,26]},\allowbreak [24,11,25,12],\allowbreak [10,23,11,24],\allowbreak [22,7,23,8],\allowbreak [3,20,4,21],\allowbreak [19,4,20,5],\allowbreak [5,18,6,19],\allowbreak \mathbf{[17,12,18,13]},\allowbreak [9,16,10,17],\allowbreak [15,8,16,9],\allowbreak [13,3,14,2]]$ \\[3pt]
\kn{13n}{3733} & 13 & $0_1$ & $[[14,15,12,13],\allowbreak [5,6,15,14],\allowbreak [13,12,11,9],\allowbreak [4,6,7,3],\allowbreak [8,17,18,7],\allowbreak [16,2,3,18],\allowbreak [17,24,25,16],\allowbreak [5,21,19,8],\allowbreak [4,1,10,11],\allowbreak \mathbf{[20,21,9,10]},\allowbreak [20,23,22,19],\allowbreak \mathbf{[2,25,26,1]},\allowbreak [23,26,24,22]]$ \\[3pt]
\kn{13n}{4025} & 13 & $0_1$ & $[[13,9,10,18],\allowbreak [2,14,15,1],\allowbreak \mathbf{[12,6,7,11]},\allowbreak [11,17,18,10],\allowbreak [9,3,4,12],\allowbreak [3,13,14,2],\allowbreak \mathbf{[5,6,19,20]},\allowbreak [1,22,19,4],\allowbreak [8,16,17,7],\allowbreak [22,15,16,21],\allowbreak [20,21,24,23],\allowbreak [8,5,25,26],\allowbreak [23,24,26,25]]$ \\[3pt]
\kn{13n}{4237} & 18 & $0_1$ & $[[34,35,26,27],\allowbreak \mathbf{[22,23,17,18]},\allowbreak [29,30,21,22],\allowbreak [32,33,9,10],\allowbreak [30,31,20,21],\allowbreak [35,36,25,26],\allowbreak [33,34,8,9],\allowbreak [27,28,7,8],\allowbreak [28,29,18,19],\allowbreak [36,32,23,24],\allowbreak [31,25,24,20],\allowbreak [19,17,15,16],\allowbreak \mathbf{[15,4,5,14]},\allowbreak [16,14,12,13],\allowbreak [5,6,11,12],\allowbreak [13,11,1,2],\allowbreak [10,7,3,4],\allowbreak [6,3,2,1]]$ \\[3pt]
\end{longtable}}

\section{Correction terms for the Bernhard--Jablan example}\label{app:bj-correction-terms}

This appendix records the correction-term data used in
Section~\ref{sec:bj-counterexample}. Write
$Y_{69}=\dbc(\kn{12n}{491})$ and $Y_{33}=\dbc(\kn{13n}{3370})$.
Each group $H^2(Y_D;\bbz)$ is cyclic of order $D$. Choose a generator $g_D$
and label the $\operatorname{Spin}^c$ structures by
$c_1(\mathfrak{s}_k)=k g_D$. Since $D$ is odd, this
labels every structure uniquely, and $k=0$ is the spin structure.
The generators are those in the archived Goeritz presentations; their
self-pairings, in the convention given by the inverse Goeritz matrix, are
$28/69$ and $25/33$, respectively, modulo $\bbz$.
Conjugation gives $d(Y_D,\mathfrak{s}_{-k})=d(Y_D,\mathfrak{s}_k)$,
so it suffices to list $0\leq k\leq(D-1)/2$.

Table~\ref{tab:bj-d69} gives all correction terms of $Y_{69}$.
For $Y_{33}$, Table~\ref{tab:bj-d33} gives the candidate set $C_k$
containing $d(Y_{33},\mathfrak{s}_k)$; a single entry denotes a singleton
set. Only the conjugate pairs represented by $k=7$ and $k=12$ remain
undetermined. Allowing all three choices for each pair gives a set of
nine candidate vectors containing the actual correction-term vector.
The surgery obstruction in Section~\ref{sec:bj-counterexample} shows
that none is compatible with the half-integral surgery required by
unknotting number one.

\begingroup
\small
\setlength{\tabcolsep}{9pt}
\setlength{\LTleft}{\fill}
\setlength{\LTright}{\fill}
\setlength{\LTcapwidth}{\textwidth}
\renewcommand{\arraystretch}{1.0}
\begin{longtable}{@{}rl@{\hspace{2em}}rl@{\hspace{2em}}rl@{}}
\caption{Correction terms $d_k=d(Y_{69},\mathfrak{s}_k)$ for
$\kn{12n}{491}$.}\label{tab:bj-d69}\\
\toprule
$k$ & $d_k$ & $k$ & $d_k$ & $k$ & $d_k$ \\
\midrule
\endfirsthead
\toprule
$k$ & $d_k$ & $k$ & $d_k$ & $k$ & $d_k$ \\
\midrule
\endhead
\bottomrule
\endfoot
\bottomrule
\endlastfoot
0 & $0$ & 12 & $14/23$ & 24 & $10/23$ \\
1 & $76/69$ & 13 & $10/69$ & 25 & $28/69$ \\
2 & $28/69$ & 14 & $-8/69$ & 26 & $40/69$ \\
3 & $-2/23$ & 15 & $-4/23$ & 27 & $22/23$ \\
4 & $-26/69$ & 16 & $-2/69$ & 28 & $106/69$ \\
5 & $-32/69$ & 17 & $22/69$ & 29 & $22/69$ \\
6 & $-8/23$ & 18 & $20/23$ & 30 & $30/23$ \\
7 & $-2/69$ & 19 & $-26/69$ & 31 & $34/69$ \\
8 & $34/69$ & 20 & $40/69$ & 32 & $-8/69$ \\
9 & $28/23$ & 21 & $-6/23$ & 33 & $-12/23$ \\
10 & $10/69$ & 22 & $76/69$ & 34 & $-50/69$ \\
11 & $88/69$ & 23 & $2/3$ &  &  \\

\end{longtable}

\begin{longtable}{@{}rl@{\hspace{2em}}rl@{}}
\caption{Candidate sets $C_k$ for $d(Y_{33},\mathfrak{s}_k)$ associated
with $\kn{13n}{3370}$.}\label{tab:bj-d33}\\
\toprule
$k$ & $C_k$ & $k$ & $C_k$ \\
\midrule
\endfirsthead
\toprule
$k$ & $C_k$ & $k$ & $C_k$ \\
\midrule
\endhead
\bottomrule
\endfoot
\bottomrule
\endlastfoot
0 & $0$ & 9 & $12/11$ \\
1 & $-2/33$ & 10 & $64/33$ \\
2 & $58/33$ & 11 & $2/3$ \\
3 & $16/11$ & 12 & $\{-8/11,\,3/11,\,14/11\}$ \\
4 & $34/33$ & 13 & $-8/33$ \\
5 & $16/33$ & 14 & $4/33$ \\
6 & $-2/11$ & 15 & $4/11$ \\
7 & $\{-32/33,\,1/33,\,34/33\}$ & 16 & $16/33$ \\
8 & $4/33$ &  &  \\

\end{longtable}
\endgroup

\end{document}